    \renewcommand{\leq}{\leqslant}
\theoremstyle{plain}
\newtheorem{thm}{Theorem}[section]
\newtheorem{dfn}[thm]{Definition}
\newtheorem{prop}[thm]{Proposition}
\newtheorem{cor}[thm]{Corollary}
\newtheorem{conge}[thm]{Conjecture}
\newtheorem{prob}[thm]{Problem}
\theoremstyle{remark}
\newtheorem{oss}[thm]{Remark}
\theoremstyle{definition}
\newtheorem{ex}[thm]{Example}
\DeclareMathOperator{\spn}{span}
\DeclareMathOperator{\gl}{\mathrm{GL}}
\DeclareMathOperator{\gr}{\mathrm{Gr}}
\DeclareMathOperator{\id}{\mathrm{Id}}
\DeclareMathOperator{\mat}{\mathrm{Mat}}
\DeclareMathOperator{\imm}{\mathrm{Im}}
\DeclareMathOperator{\F}{\mathbb{C}}
\DeclareMathOperator{\JP}{\mathcal{J}(P)}
\DeclareMathOperator{\End}{\mathrm{End}}
\DeclareMathOperator{\Hom}{\mathrm{Hom}}
\DeclareMathOperator{\PP}{\mathbb{P}}
\DeclareMathOperator{\C}{\mathbb{C}}
\DeclareMathOperator{\Q}{\mathbb{Q}}
\DeclareMathOperator{\N}{\mathbb{N}}
\DeclareMathOperator{\Z}{\mathbb{Z}}
\DeclareMathOperator{\R}{\mathbb{R}}
\DeclareMathOperator{\lex}{\mathrm{lex}}
\DeclareMathOperator{\supp}{\mathrm{supp}}
\DeclareMathOperator{\var}{\vartriangleleft}
\DeclareMathOperator{\seg}{\mathrm{Seg}}
\title{\bf{Immanant varieties}}
\author{}
\author{Davide Bolognini\thanks{Dipartimento di Ingegneria Industriale e Scienze Matematiche, Università Politecnica delle Marche, Ancona, Italy.
\href{mailto:davide.bolognini.cast@gmail.com}{davide.bolognini.cast@gmail.com} } \ and Paolo Sentinelli\thanks{ Dipartimento di Matematica, Politecnico di Milano, Milan, Italy. \\ \href{mailto:paolosentinelli@gmail.com}{paolosentinelli@gmail.com}}}
\date{}
\begin{document}

\maketitle

\vspace{-4em}

\begin{abstract}
We introduce immanant varieties, associated to simple characters of a finite group. They include well-studied classes of varieties, as Segre embeddings, Grassmannians and certain other classes of Chow varieties. For a one-dimensional character $\chi$, we define $\chi$-matroids by a maximality property. For trivial characters, by exploring the combinatorics of incidence stratifications, we provide a set of generators for the Chow vector spaces of the corresponding immanant varieties. \end{abstract}

\textbf{Keywords:} \emph{immanant, matroid, symmetric group, one-dimensional characters, Chow group}

$\,$

\textbf{Math. sub class:} \emph{05B35, 05E45, 05E18, 06A07, 15A69, 20B30}

\section{Introduction}
The set $\seg(k,n)$ of rank-one tensors in $\PP\left(\bigotimes_{i=1}^k\C^n\right)$ is a projective variety; it is the image under the Segre embedding of a Cartesian product of projective spaces.
Given a non-zero endomorphism $f$ of the vector space $\bigotimes_{i=1}^k\C^n$, the Zariski closure of the set $f(\seg(k,n))$ is an irreducible algebraic set in $\PP\left(\bigotimes_{i=1}^k\C^n\right)$, i.e. a projective variety (Definition \ref{definizione} and Proposition \ref{teorema irriducibile}).
Under the standard action of the symmetric group $S_k$, the tensor product $\bigotimes_{i=1}^k\C^n$ has the structure of a module over the group algebra $\C[S_k]$. The induced representations of simple representations of a subgroup $G \subseteq S_k$ provide idempotents of $\C[S_k]$ which are endomorphism of the tensor product above. The image of such endomorphisms are the so-called \emph{symmetry classes of tensors} (see e.g. \cite{Merris} for a general theory, \cite{portoghesi} for a survey, and \cite{marcus} for generalized matrix functions).

This construction allows us to associate to each simple character $\chi$ of $G \subseteq S_k$ a projective variety $\gr_{\chi}(k,n)$ which we call \emph{immanant variety} (Definition \ref{definizione varieta immananti}), since for $G=S_k$ the parametric equations defining it are written in terms of immanants (Theorem \ref{equazioni}), which are generalizations of the determinant of a square matrix (see e.g. \cite{stembridge} and reference therein). In our context we use in a wider sense the word \emph{immanant} for a matrix (see Definition \ref{def immanant}), including the standard notions of immanants and generalized matrix functions for simple characters of finite groups (see \cite{marcus}).

Among immanant varieties there is a famous one, namely the complex Grassmannian; it arises by considering the alternating character of a symmetric group and the immanant involved in the parametric equations is the determinant.
In this article we prove results recovering part of the geometrical and combinatorial richness of Grassmannians for a wide class of immanant varieties. The main features explored are the following ones.

$\,$

  \textbf{$\chi$-matroids:} it is well-known that the points of a Grassmannian
  $\gr_{\C}(k,n)$ correspond to rank $k$ matroids on the ground set $[n]$, representable over $\C$. A characterization of matroids, due to Gale, is by their maximality property (see \cite[Theorem 1.3.1]{coxeter matroids}). A maximality property can be defined in the more general case of one-dimensional characters $\chi$ of any finite group, leading to the definition of {\em $\chi$-matroid}, see Definition \ref{chi matroidi}. Although the points of $\gr_{\chi}(k,n)$ are not $\chi$-matroids in general (Example \ref{non e chi matroide}), the points of the varieties associated to trivial characters are $\chi$-matroids (Corollary \ref{corollario massimalita}).

    $\,$

  \textbf{Incidence stratifications:} the notion of \emph{incidence stratification}
  has been introduced by the authors in \cite{BoloSenti}. The stratification of a Grassmannian variety by its Schubert varieties is an example of incidence stratification (see \cite[Proposition 4.16]{BoloSenti}). This construction provides a correspondence between  Schubert varieties and principal order ideals of the Bruhat order on Grassmannian permutations, thanks to the maximality property of matroids.  In the same vein, for the trivial character $\mathbf{1}_G:G \rightarrow \{1\}$, the maximality property of $\gr_{\mathbf{1}_G}(k,n)$ guarantees the existence of an incidence stratification whose strata are projective varieties (Proposition \ref{teorema ideali ordine principali} and Corollary \ref{corollario Schubert irriducibili}) and
  whose inclusion poset is graded by dimension (Theorem \ref{teorema ciao}) and rank-symmetric (Corollary \ref{specchio}).
  These stratifications are closely related to \emph{Seshadri stratifications}, as recently introduced by
  Chirivì, Fang and Littelmann in \cite{chirivi}. In fact, all the axioms defining Seshadri stratifications are satisfied by our stratifications, except possibly smoothness in codimension one (Example \ref{osservazione i}).

     $\,$

   \textbf{Chow vector spaces:} it is well-known that the Chow group of the Grassmannian is free, with a basis given by the classes of Schubert varieties (see e.g. \cite[Section 14.7]{Fulton}). By using the combinatorial results proved along the paper, we find a set of generators for the vector space obtained tensoring by $\mathbb{Q}$ the Chow group of $\gr_{\mathbf{1}_G}(k,n)$ (Theorem \ref{teorema ciao}), namely the set of rational equivalence classes of the strata of the incidence stratification explained above. Moreover, applying Proposition \ref{polia}, we give an upper bound for the Hilbert-Poincar\'e polynomial of the Chow vector space.

$\,$

The last part of the paper is devoted to some conjectures and open problems. One of them concerns shellability of intervals in posets arising from the geometry of $\gr_{\mathbf{1}_G}(k,n)$. In parabolic quotients of Coxeter groups, the order complexes of Bruhat intervals are shellable (see \cite[Theorem 2.7.5]{BB}); in particular, this holds for the Bruhat order of Grassmannian permutations, which is the inclusion poset of Schubert varieties in a Grassmannian. Since the latter is a distributive lattice, the shellability of its intervals can be deduced from a general result of Bj\"{o}rner \cite{biorner}. The same can be easily proved for the intervals of the inclusion poset of the incidence strata in $\gr_{\mathbf{1}_{S_k}}(k,n)$ (see the end of Section \ref{sezione triviale}). Despite the fact that, for arbitrary groups $G \subseteq S_k$, this poset is not a lattice in general (see Example \ref{esempio ciclico}), several experiments led us to conjecture that it is shellable (Conjecture \ref{conge shell}).

\section{Notation and preliminaries} \label{sezione preliminari}

In this section we fix notation and recall some definitions useful for the rest of the paper. We refer to \cite{Stanley} for posets and their incidence algebras, to \cite[Chapter XVIII]{lang} for the representation theory of finite groups, to \cite{coxeter matroids} and \cite[Section 2.3]{BoloSenti} for matroids, to \cite{Landsberg} and \cite{Merris} for tensors, and to \cite{Fulton} for intersection theory.

Let $\Z$ be the ring of integer numbers, $\Q$ the field of rational numbers, $\R$  the field of real numbers, $\C$ the field of complex numbers and $\mathbb{N}$ the set of positive integers. For $n\in \mathbb{N}$, we use the notation $[n]:=\left\{1,2,\ldots,n\right\}$. For a finite set $X$, we denote by $|X|$ its cardinality, by $\mathcal{P}(X)$ its power set, by $X^n$ or $X^{\times n}$ its $n$-th power under Cartesian product. If $x\in X^n$, we denote by $x_i$ the projection of $x$ on the $i$-th factor.
If $f : X \rightarrow Y$ is a function, we let $\imm(f):=\{f(x):x \in X\}$. We denote by $f$ also the induced function $f: \mathcal{P}(X) \rightarrow \mathcal{P}(Y)$.

If $(X,\preccurlyeq)$ is a poset, then $X^n$ is the poset given by letting $x \leqslant y$ if and only $x_i \preccurlyeq y_i$, for all $i\in [n]$ and $x,y\in X^n$. The set $[n]$ is a poset under the natural order; so, for $k \in \N$, the set $[n]^k$ is considered to be a poset. We denote by $\var$ a covering relation in a poset $P$, i.e. $x \var y$ if and only if $x<y$ and $\{z\in P: x < z < y\}=\varnothing$. In the category of graded posets, a morphism $f : X \rightarrow Y$
is an order preserving function such that $\rho_2(f(x))=\rho_1(x)$,
for all $x\in X$, where $\rho_1$ is the rank function of $X$ and $\rho_2$ is the rank function of $Y$.

Let $O, O_1$ and $O_2$ be objects in a category. The notation $\Hom(O_1,O_2)$ stands for the set of morphisms between $O_1$ and $O_2$. We let $\End(O):=\Hom(O,O)$ and $O_1 \simeq O_2$ denotes the existence of an isomorphism.

 Let $n \in \N$ and $V$ be an $n$-dimensional $\C$-vector space; define an equivalence relation $\sim$ on $V\setminus \{0\}$ by setting $u \sim v$
if and only if $\dim \left( \spn_{\C}\{u,v\}\right)=1$, for all $u,v \in V\setminus \{0\}$. Then, for any subset $X\subseteq V$, we let $\PP(X):= \pi(X\setminus \{0\})$, where
$\pi : V\setminus \{0\} \rightarrow \left(V\setminus \{0\}\right)/\sim$
is the canonical projection. In particular, $\PP(V)$ is the projective space of $V$. For $v\in V\setminus \{0\}$, we let $[v]:=\pi(v) \in \PP(V)$.
Let $k \in \N$; with $V^{\otimes k}$ we denote the $k$-th tensor power
 of $V$. We let $\mathrm{seg}^{k,n}  : \PP(V)^{\times k} \rightarrow \PP\left(V^{\otimes k}\right)$ be the function defined by
 $$\mathrm{seg}^{k,n} ([v_1],\ldots,[v_k])=[v_1\otimes \ldots \otimes v_k],$$
 for all $v_1,\ldots,v_k \in V\setminus \{0\}$. This is the so-called \emph{Segre embedding} and we set $\seg(k,n):=\imm(\mathrm{seg}^{k,n})$. It is well-known that $\seg(k,n)$
 is a $k(n-1)$-dimensional projective subvariety of $ \PP\left(V^{\otimes k}\right)$.

We end this section by recalling the definition of incidence stratification of a projective set, as appears in \cite[Section 4]{BoloSenti}.
Let $P=\left([n],\leqslant_P\right)$ be a poset of cardinality $n$.
An {\em order ideal} of $P$ is a subset $I \subseteq P$ such that $i \in I$ and $j \leqslant_P i$ imply $j \in I$. The distributive lattice of order ideals of a poset $P$ is denoted by $\JP$. It is clear that there is a bijection between $\JP$ and the sets $\left\{\max(I):I\in \JP\right\}$.
For $x\in P$, we define the {\em principal order ideal} generated by $x$ by setting $$x^\downarrow:=\left\{y\in P: y\leqslant_P x\right\}.$$

For $h,k \in \N$ we denote by $\mathrm{Mat}_{h,k}(\C)$ the algebra of matrices whose entries are complex numbers. If $A \in \mathrm{Mat}_{h,k}(\C)$, $i\in [h]$ and $j \in [k]$, $A_{i,j}$ is the entry in position $(i,j)$ of the matrix $A$.
\begin{dfn}
The {\em incidence algebra} of $P$ over $\C$ is $$I(P;\C):=\left\{A \in \mathrm{Mat}_{n,n}(\C): i \nleqslant_P j \Rightarrow A_{i,j}=0 \right\}.$$
The \emph{incidence group} $I^*(P;\C)$ of $P$ over $\C$ is the group of invertible elements of $I(P;\C)$. \end{dfn}

The subalgebra $I(P;\C) \subseteq \End(\C^n)$ has invariant-subspace lattice isomorphic to $\mathcal{J}(P)$, where
$I(P;\F)$ acts on the elements of $\C^n$ by left multiplication.
Clearly this action carries an action of $I^*(P;\C)$ on $\mathbb{P}(\C^n)$, whose orbits are described as follows (see \cite[Theorem 4.2]{BoloSenti}).

Let $\{e_1,\ldots, e_n\}$ be the canonical basis of $\C^n$ and $V_I:=\spn_{\C}\left\{e_i:i\in I\right\}$, for any subset $I\subseteq [n]$. An orbit of the action of $I^*(P;\C)$ on $\mathbb{P}(\C^n)$ is of the form
  $$C_I:=\mathbb{P}(V_I)\setminus \bigcup\limits_{i \in \mathrm{max}(I)}\mathbb{P}\left(V_{I \setminus \{i\}}\right),$$
   for any $I\in \JP\setminus\{\varnothing\}$, and the set of orbits $\{C_I:I\in \JP\setminus\{\varnothing\}\}$ is a
    partition of $\mathbb{P}(\C^n)$.
 The Zariski closure of $C_I$ is a projective space, given by $$\overline{C}_I=\biguplus\limits_{H \in \mathcal{J}(I)\setminus \{\varnothing\}} C_H=\mathbb{P}\left(V_I\right),$$ for all $I\in \mathcal{J}(P)$.

The notion of incidence stratification of a subset of a projective space, introduced in \cite{BoloSenti}, includes some known affine ones, such as the stratifications given by Schubert varieties in Grassmannians and flag varieties (see \cite[Propositions 4.16 and 5.5]{BoloSenti}); the following is the formal definition.
\begin{dfn} \label{def incidence strat}
 Let $X \subseteq \mathbb{P}(\C^n)$ and $P$ be a poset of cardinality $n$.
 The set
 $$\left\{\overline{C}_I\cap X : I \in \mathcal{J}(P) \right\} \setminus \left\{\varnothing \right\}$$ is an \emph{incidence stratification} of $X$.
\end{dfn}
For a projective variety $X$, $A_*X$ denotes the group of $k$-\emph{cycles modulo rational equivalence} on $X$ (see \cite[Chapter 1]{Fulton}). We let $A_*(X;\Q):=A_*X \otimes_{\Z} \Q$ to be the \emph{Chow vector space} of $X$ over $\Q$.

\section{The projective variety $\mathrm{Gr}_f(k,n)$} \label{sezione gr}

Immanant varieties, which we are going to define in the following subsection, are particular cases of a general construction that we describe here.

Let $k,n \in \N$, $V$ be an $n$-dimensional complex vector space and $\{e_1,\ldots,e_n\}$ a basis. Let $\{e_x: x\in [n]^k\}$ be the corresponding basis of $V^{\otimes k}$, where $e_x:=e_{x_1}\otimes \ldots \otimes e_{x_k}$ and $x_i$ is the projection on the $i$-th component of $x$,
 for all $x \in [n]^k$.
 For $f \in \End\left(V^{\otimes k}\right)$, in order to introduce the main objects of our study, define a function
 $$\widehat{f}: \seg(k,n)\setminus \PP(\ker(f)) \rightarrow \PP\left(V^{\otimes k}\right)$$ by setting
 $\widehat{f}([v_1 \otimes \ldots \otimes v_k])= \left[f(v_1 \otimes \ldots \otimes v_k)\right]$,
 for all $v_1,\ldots, v_k \in V \setminus \{0\}$ such that
 $f(v_1 \otimes \ldots \otimes v_k)\neq 0$.

\begin{dfn} \label{definizione}
 Let $f\in \End\left(V^{\otimes k}\right)$.  The algebraic set $\gr_f(k,n) \subseteq \PP(V^{\otimes k})$ is defined  by setting
 $$\gr_f(k,n):=\overline{\imm(\,\widehat{f}\,)},$$
 where the overline stands for the Zariski closure.
\end{dfn}
The notation $\gr_f(k,n)$ is motivated by the fact that, for suitable choices of the function $f$, this construction leads to combinatorial and geometrical notions naturally appearing in the study of Grassmannian varieties.
We observe that $\gr_f(k,n) \subseteq \PP\left(\imm(f)\right)$ and that
the set $\imm(\,\widehat{f}\,)$ is described by a system of $n^k$
parametric polynomial equations with $kn$ parameters. When $\gr_f(k,n) \neq \varnothing$, i.e. $f \neq 0$, by a standard result (see e.g. \cite[Proposition 4.5.5]{cox}), it is irreducible, hence a projective variety.

\begin{prop} \label{teorema irriducibile}
Let $f\in \End\left(V^{\otimes k}\right)\setminus\{0\}$; then the algebraic set $\gr_f(k,n)$ is a projective variety.
\end{prop}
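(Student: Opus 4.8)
The plan is to realise $\gr_f(k,n)$ as the Zariski closure of the image of an irreducible variety under a morphism; since the continuous image of an irreducible space is irreducible and the closure of an irreducible set is irreducible, this yields irreducibility, and $\gr_f(k,n)$ is closed by construction, hence a projective variety. The only geometric input is that $\seg(k,n)$ is an irreducible projective subvariety of $\PP(V^{\otimes k})$, recalled in Section~\ref{sezione preliminari}.

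First I would check that the domain $U:=\seg(k,n)\setminus\PP(\ker f)$ of $\widehat{f}$ is a nonempty open subset of $\seg(k,n)$. It is open since $\PP(\ker f)$ is Zariski closed. It is nonempty because $f\neq 0$ makes $\ker f$ a proper subspace of $V^{\otimes k}$, whereas $\seg(k,n)$ contains every basis tensor $e_x=e_{x_1}\otimes\cdots\otimes e_{x_k}$ and hence spans $V^{\otimes k}$; thus $\seg(k,n)\not\subseteq\PP(\ker f)$. Being a nonempty open subset of the irreducible space $\seg(k,n)$, $U$ is irreducible.

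Next I would note that $\widehat{f}|_U$ is a morphism: the rule $[w]\mapsto[f(w)]$ is written in homogeneous coordinates by the linear forms giving the components of $f$, which have no common zero precisely outside $\PP(\ker f)$; this defines a morphism $\PP(V^{\otimes k})\setminus\PP(\ker f)\to\PP(V^{\otimes k})$, of which $\widehat{f}$ is the restriction to $U$. Hence $\imm(\widehat{f})=\widehat{f}(U)$ is irreducible, so $\gr_f(k,n)=\overline{\imm(\widehat{f})}$ is an irreducible closed subset of $\PP(V^{\otimes k})$, i.e.\ a projective variety.

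I do not foresee a genuine obstacle: this is essentially \cite[Proposition~4.5.5]{cox} applied to the polynomial parametrization $(v_1,\dots,v_k)\mapsto f(v_1\otimes\cdots\otimes v_k)$, whose $n^k$ components are polynomials in the $kn$ coordinates of the $v_i$. The only points worth a line of justification are (i) the nonemptiness of $U$, which rests on the nondegeneracy of the Segre variety, and (ii) that $\widehat{f}|_U$ is a bona fide morphism, so that the principle ``the image of an irreducible space is irreducible'' applies. Equivalently one may pass to the affine cone and argue that $F\colon\C^{kn}\to V^{\otimes k}$, $F(v_1,\dots,v_k)=f(v_1\otimes\cdots\otimes v_k)$, being a morphism out of an irreducible space, has irreducible closure $\overline{\imm(F)}$, which is an irreducible cone because $f\neq 0$; then $\gr_f(k,n)$ is its projectivization, and projectivization sends irreducible cones to irreducible projective sets.
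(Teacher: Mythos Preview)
Your proof is correct and follows essentially the same approach as the paper: the paper simply observes that $\imm(\widehat{f})$ is described by $n^k$ parametric polynomial equations in $kn$ parameters and invokes \cite[Proposition~4.5.5]{cox} to conclude irreducibility, which is exactly the argument you spell out at the end (and whose geometric counterpart you give in the body). Your additional care in checking that $U$ is nonempty and that $\widehat{f}$ is a genuine morphism only makes explicit what the paper leaves implicit.
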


In several cases, the set $\imm(\,\widehat{f}\,)$ is already closed in the Zariski topology, as the next result shows.

\begin{prop} \label{corollario chiuso}
Let $f\in \End\left(V^{\otimes k}\right)$; then
    $$\seg(k,n)\cap \PP(\ker(f))=\varnothing \,\, \Rightarrow \,\, \gr_f(k,n)=\imm(\,\widehat{f}\,).$$
\end{prop}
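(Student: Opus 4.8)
The plan is to show that under the hypothesis $\seg(k,n)\cap\PP(\ker(f))=\varnothing$, the domain of $\widehat{f}$ is all of $\seg(k,n)$, which is a projective variety, and that $\widehat{f}$ is the restriction to $\seg(k,n)$ of the morphism of projective spaces induced by $f$; the image of a projective variety under a morphism is closed, so $\imm(\widehat{f})$ is already Zariski closed and hence equals its own closure $\gr_f(k,n)$. First I would unwind the definitions: the hypothesis says precisely that no decomposable tensor $v_1\otimes\cdots\otimes v_k$ with all $v_i\neq 0$ lies in $\ker(f)$, equivalently $f(v_1\otimes\cdots\otimes v_k)\neq 0$ for every such tuple. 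Therefore $\seg(k,n)\setminus\PP(\ker(f))=\seg(k,n)$, so $\widehat{f}$ is defined on the whole Segre variety.

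Next I would invoke the standard fact that a nonzero linear map $f\colon V^{\otimes k}\to V^{\otimes k}$ induces a morphism of projective varieties on the open set $\PP(V^{\otimes k})\setminus\PP(\ker f)$, sending $[w]\mapsto[f(w)]$; call it $\overline{f}$. By the previous paragraph $\seg(k,n)$ is entirely contained in the domain of $\overline{f}$, and by construction $\widehat{f}=\overline{f}|_{\seg(k,n)}$, so $\imm(\widehat{f})=\overline{f}(\seg(k,n))$. Since $\seg(k,n)$ is a projective (in particular complete) variety and $\overline{f}$ is a morphism into the projective space $\PP(V^{\otimes k})$, the image $\overline{f}(\seg(k,n))$ is Zariski closed — this is the properness/completeness of projective varieties, i.e. the main theorem of elimination theory. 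Hence $\gr_f(k,n)=\overline{\imm(\widehat{f})}=\imm(\widehat{f})$, which is the claim. (The case $f=0$ is vacuous since then $\seg(k,n)\subseteq\PP(\ker f)$ forces $\seg(k,n)=\varnothing$, impossible, so the hypothesis cannot hold; alternatively one notes both sides are empty if one allows $n$ or $k$ to be degenerate, but with $k,n\in\N$ this does not arise.)

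The only genuine subtlety — what I expect to be the main obstacle to a fully rigorous write-up — is checking that $\widehat{f}$, a priori defined only set-theoretically on the set $\seg(k,n)$, really is a \emph{morphism} of varieties, so that completeness applies. This is where the hypothesis does its work: without it, $f$ might send some decomposable tensor to $0$, and then $[f(w)]$ would be undefined on part of $\seg(k,n)$, producing only a rational map whose image need not be closed (this is exactly why $\gr_f(k,n)$ was defined as a closure in the first place). Under the hypothesis, however, $\overline{f}$ restricts to an honest morphism on all of $\seg(k,n)$ because $\seg(k,n)$ avoids $\PP(\ker f)$ entirely, and no further argument about the structure of $f$ or of the Segre variety is needed. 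One should perhaps remark that $\seg(k,n)$ being closed in $\PP(V^{\otimes k})$ (stated in Section~\ref{sezione preliminari}) is what guarantees it is itself a complete variety, and that the composite morphism $\seg(k,n)\hookrightarrow\PP(V^{\otimes k})\setminus\PP(\ker f)\xrightarrow{\overline{f}}\PP(V^{\otimes k})$ has closed image.
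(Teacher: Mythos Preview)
Your argument is correct and is essentially the same as the paper's: both observe that the hypothesis makes $\widehat{f}$ a genuine morphism defined on all of the projective variety $\seg(k,n)$, and then invoke the fact that the image of a projective variety under a morphism to projective space is Zariski closed (the paper cites \cite[Theorem 5.2.2]{Shafarevich}, you phrase it as properness/completeness). The only difference is presentation: the paper is terser, while you spell out in more detail why the hypothesis guarantees $\widehat{f}$ extends to an honest morphism rather than a mere rational map.
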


\begin{proof}
By \cite[Theorem 5.2.2]{Shafarevich}, given an algebraic set $X \subseteq \PP(W)$ and $W$ a complex vector space, if $F : X \rightarrow \PP(W)$ is a polynomial function, then $F$ is closed. Since $\seg(k,n)\cap \PP(\ker(f))=\varnothing$, it holds that $\widehat{f}$ is a polynomial function from $\seg(k,n)$ to $\PP\left(V^{\otimes k}\right)$. From the fact that $\seg(k,n)$ is a projective variety it follows that $\widehat{f}$ is closed.
\end{proof}

The simplest example of these varieties corresponds to $f=\id_{V^{\otimes k}}$; in this case $\imm(\,\widehat{f}\,)=\gr_f(k,n)=\seg(k,n)$.
By using Proposition \ref{corollario chiuso}, it is not difficult to show that if $f \in \End\left(V^{\otimes k}\right)$ is invertible, then $\gr_f(k,n) \simeq \seg(k,n)$ as projective varieties. The general situation is summarized in the following result.

\begin{prop}\label{prop invertibile}
Let $f,g \in \End\left(V^{\otimes k}\right) \setminus \{0\}$ with $g$ invertible. Then $\gr_{g\circ f}(k,n) \simeq \gr_f(k,n)$, as projective varieties.
\end{prop}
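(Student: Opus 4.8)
The plan is to reduce the statement to the already-proved Proposition~\ref{corollario chiuso} together with the fact (stated right after it) that an invertible endomorphism induces an isomorphism $\gr_g(k,n) \simeq \seg(k,n)$. The key observation is that $g$ invertible forces $\ker(g \circ f) = \ker(f)$, so the two functions $\widehat{f}$ and $\widehat{g \circ f}$ have the \emph{same domain} $\seg(k,n) \setminus \PP(\ker(f))$, and on that common domain one has $\widehat{g \circ f} = \widehat{g} \circ \widehat{f}$, where $\widehat{g}$ is defined on all of $\seg(k,n)$ (indeed on all of $\PP(V^{\otimes k})$) because $\ker(g) = \{0\}$.

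First I would verify the kernel identity $\ker(g \circ f) = \ker(f)$: the inclusion $\supseteq$ is trivial and $\subseteq$ follows from injectivity of $g$. This gives $\seg(k,n) \setminus \PP(\ker(g\circ f)) = \seg(k,n) \setminus \PP(\ker(f))$, so the domains agree. Next I would check the compositional identity on a point $[v_1 \otimes \cdots \otimes v_k]$ in this common domain: since $f(v_1 \otimes \cdots \otimes v_k) \neq 0$ and $g$ is injective, $g(f(v_1 \otimes \cdots \otimes v_k)) \neq 0$, so $\widehat{g \circ f}([v_1 \otimes \cdots \otimes v_k]) = [g(f(v_1 \otimes \cdots \otimes v_k))] = \widehat{g}(\widehat{f}([v_1 \otimes \cdots \otimes v_k]))$. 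Because $g$ is a linear isomorphism, the induced map $\widehat{g} : \PP(V^{\otimes k}) \to \PP(V^{\otimes k})$ is a well-defined isomorphism of projective spaces (a projective linear transformation), hence a homeomorphism in the Zariski topology and an isomorphism of varieties.

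From here I would pass to Zariski closures. Since $\widehat{g}$ is a homeomorphism of $\PP(V^{\otimes k})$, it commutes with closure, so
$$
\gr_{g \circ f}(k,n) = \overline{\imm(\widehat{g \circ f})} = \overline{\widehat{g}\big(\imm(\widehat{f})\big)} = \widehat{g}\Big(\overline{\imm(\widehat{f})}\Big) = \widehat{g}\big(\gr_f(k,n)\big).
$$
Thus $\widehat{g}$ restricts to an isomorphism $\gr_f(k,n) \xrightarrow{\ \sim\ } \gr_{g\circ f}(k,n)$ of projective varieties (both are irreducible and nonempty by Proposition~\ref{teorema irriducibile}, since $f \neq 0$ and $g \circ f \neq 0$), which is exactly the claim. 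Strictly speaking one should note that the ambient isomorphism $\widehat{g}$ of $\PP(V^{\otimes k})$ carries any subvariety to an isomorphic subvariety, which is standard.

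The argument is essentially formal; the only point requiring a little care is justifying that $\widehat{g}$ is an isomorphism of varieties commuting with Zariski closure — this is where I expect the ``main obstacle'' to lie, though it is mild: it follows because a linear automorphism of $V^{\otimes k}$ descends to an automorphism of $\PP(V^{\otimes k})$ given in homogeneous coordinates by an invertible matrix, which is a biregular map. One could alternatively invoke Proposition~\ref{corollario chiuso} to note $\imm(\widehat{g}) = \gr_g(k,n) = \PP(V^{\otimes k})$ is already closed, but that is not even needed here since $\widehat{g}$ is globally defined. Everything else is bookkeeping about domains and the elementary kernel computation.
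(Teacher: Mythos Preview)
Your argument is correct and is essentially the same as the paper's: the paper's one-line proof observes that the parametric equations of $\gr_{g\circ f}(k,n)$ in the basis $\{g(e_x)\}$ coincide with those of $\gr_f(k,n)$ in the basis $\{e_x\}$, which is precisely your statement that the projective linear automorphism $\widehat{g}$ carries one variety to the other. Your version spells out the kernel identity and the closure--homeomorphism step explicitly, whereas the paper leaves these implicit; note that your opening sentence about reducing to Proposition~\ref{corollario chiuso} is a red herring (as you yourself acknowledge at the end), since that proposition plays no role in the actual argument.
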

\begin{proof} The isomorphism is immediate since the parametric equations
of $\gr_{g\circ f}(k,n)$ in the basis $\{g(e_x):x\in [n]^k\}$
are the ones of $\gr_f(k,n)$ in the basis
$\{e_x: x\in [n]^k\}$.
\end{proof}

\begin{oss}
  For $g\in \End\left(V^{\otimes k}\right)$ invertible, $\gr_{f\circ g}(k,n)$ and  $\gr_f(k,n)$ could be not isomorphic as projective varieties.
  For example, let $k=2$, $n=4$, $\leqslant_{\lex}$ be the lexicographic order on $[4]^2$, $f(e_x)=e_{(x_1,x_2)}-e_{(x_2,x_1)}$,
  and $$g(e_x)= \left\{
                 \begin{array}{ll}
                   \sum\limits_{y\leqslant_{\lex} x}e_y, & \hbox{if $x\leqslant_{\lex} (3,1)$;} \\
                   e_x, & \hbox{otherwise,}
                 \end{array}
               \right. $$
  for all $x\in [4]^2$. Then $\gr_f(2,4)$ is the complex Grassmannian $\gr_{\C}(2,4)$ and $\gr_{f\circ g}(2,4)=\PP\left(\imm(f)\right) \simeq \PP(\C^6)$.
\end{oss}

Now we provide an example of a variety $\gr_f(k,n)$ corresponding to a not invertible endomorphism.

\begin{ex}
  Let $k=3$, $n=2$ and $f\in \End\left(V^{\otimes 3}\right)$
  defined by setting
  $$f(e_x)= \left\{\begin{array}{ll} e_{(2,1,2)}+e_{(2,2,1)}+e_{(2,2,2)}, & \hbox{if $x\in \left\{(2,1,2), (2,2,1), (2,2,2)\right\}$;} \\
e_x, & \hbox{otherwise,}\end{array} \right.$$
for all $x\in [2]^3$. Then $\imm(\,\widehat{f}\,)$ is described by the following parametric equations:
$$\left\{
  \begin{array}{ll}
    x_{111} = a_1b_1c_1 \\
    x_{112} = a_1b_1c_2 \\
    x_{121} = a_1b_2c_1 \\
    x_{122} = a_1b_2c_2 \\
    x_{211} = a_2b_1c_1 \\
    x_{212} = a_2b_1c_2+a_2b_2c_1+a_2b_2c_2\\
    x_{221} = a_2b_1c_2+a_2b_2c_1+a_2b_2c_2\\
    x_{222} = a_2b_1c_2+a_2b_2c_1+a_2b_2c_2
  \end{array}
\right.$$
With the help of Sagemath, the Gr\"obner basis method for implicitization by using the monomial order degrevlex (see for instance \cite{cox}) provides the following Cartesian equations for $\gr_f(3,2)$:
$$\left\{
  \begin{array}{ll}
    (x_{112} + x_{121} + x_{122})x_{211} - x_{111}x_{222} = 0 \\
    x_{112}x_{121} - x_{111}x_{122}=0 \\
    x_{212} - x_{222} = 0 \\
    x_{221} - x_{222} = 0
  \end{array}
\right.$$ This is a projective variety of dimension $3$, obtained as intersection of two hypersurfaces of $\PP\left(\imm(f)\right) \simeq \PP(\C^6)$.
Notice that the last two equations are the equations of $\PP\left(\imm(f)\right)$ in $\PP\left(V^{\otimes 3}\right)$.
The set $\imm(\,\widehat{f}\,)$ is not Zariski-closed. In fact, if $v:=e_{(1,2,2)}+e_{(2,1,1)}-e_{(1,2,2)}$, then $[v] \in \gr_f(3,2) \setminus \imm(\,\widehat{f}\,)$.
\end{ex}

We end this section by showing that some varieties $\gr_f(k,n)$ are actually Cartesian products. For $h\in \N$ and $a \in \N^h$, let $a(i):=\sum_{j=1}^i a_j$, for all $i\in [h]$, and $k:=a(h)$. Given $f_i \in \End\left(V^{\otimes a_i}\right)$, for all $i\in [h]$, we define
$f_1 \otimes \ldots \otimes f_h  \in \End\left(V^{\otimes k}\right)$ by setting
$$(f_1 \otimes \ldots \otimes f_h)(e_x):= f_1(e_{x_{[1,a(1)]}}) \otimes \ldots \otimes f_h(e_{x_{[a(h-1)+1,a(h)]}}),$$
for all $x\in [n]^k$, where, for $i,j\in [k]$, $i\leqslant j$,
$x_{[i,j]}:=(x_i,\ldots,x_j)\in [n]^{j-i+1}$.

 \begin{prop} \label{teorema segri}
Let
$h\in \N$, $a \in \N^h$ and
$k:=a(h)$. Let $f_i \in \End\left(V^{\otimes a_i}\right) \setminus \{0\}$, for all $i\in [h]$. Then, as projective varieties,
$$\gr_{f_1 \otimes \ldots \otimes f_h}(k,n) \simeq \gr_{f_1}(a_1,n) \times \ldots \times \gr_{f_h}(a_h,n).$$
\end{prop}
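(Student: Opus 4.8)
The plan is to exhibit an explicit isomorphism of projective varieties. Write $N := n^{a_1} + \dots + n^{a_h}$ viewed via a natural identification, but more precisely the ambient space factors: there is a linear isomorphism $\Phi: V^{\otimes a_1} \otimes \dots \otimes V^{\otimes a_h} \xrightarrow{\sim} V^{\otimes k}$ sending $e_{y^{(1)}} \otimes \dots \otimes e_{y^{(h)}}$ to $e_{(y^{(1)},\dots,y^{(h)})}$, and by construction $f_1 \otimes \dots \otimes f_h = \Phi \circ (f_1 \boxtimes \dots \boxtimes f_h) \circ \Phi^{-1}$, where $f_1 \boxtimes \dots \boxtimes f_h$ is the external tensor product acting on $\bigotimes_i V^{\otimes a_i}$. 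Since $\Phi$ is a linear isomorphism it induces an isomorphism of projective spaces, so by Proposition \ref{prop invertibile} (applied with the invertible $g = \Phi$, up to the identification of the two copies of $V^{\otimes k}$) it suffices to prove the statement for the endomorphism $f_1 \boxtimes \dots \boxtimes f_h$ of $\bigotimes_i V^{\otimes a_i}$, i.e. to show $\gr_{f_1 \boxtimes \dots \boxtimes f_h} \simeq \gr_{f_1}(a_1,n) \times \dots \times \gr_{f_h}(a_h,n)$.

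First I would record the parametric description. A point of $\seg(k,n)$ is $[v_1 \otimes \dots \otimes v_k]$; under $\Phi^{-1}$ it corresponds to $[(v_1\otimes\dots\otimes v_{a(1)}) \otimes \dots \otimes (v_{a(h-1)+1}\otimes \dots \otimes v_{a(h)})]$, which is exactly a point of $\seg(h, n^{a_1},\dots)$-type Segre product of the Segre varieties $\seg(a_i,n)$. Applying $f_1 \boxtimes \dots \boxtimes f_h$ componentwise, the image point is $[f_1(w_1) \otimes \dots \otimes f_h(w_h)]$ with $w_i := v_{a(i-1)+1}\otimes\dots\otimes v_{a(i)} \in \seg(a_i,n)$, defined precisely when every $f_i(w_i) \neq 0$. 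Thus $\imm(\widehat{f_1\boxtimes\dots\boxtimes f_h})$ is the image of $\imm(\widehat{f_1}) \times \dots \times \imm(\widehat{f_h})$ under the Segre embedding $\PP(\imm f_1) \times \dots \times \PP(\imm f_h) \hookrightarrow \PP(\bigotimes_i \imm f_i)$, which is a closed immersion. The next step is to take Zariski closures: since the Segre embedding is a closed morphism and a homeomorphism onto its image, the closure of the image of $\prod_i \imm(\widehat{f_i})$ equals the image of the closure $\prod_i \overline{\imm(\widehat{f_i})} = \prod_i \gr_{f_i}(a_i,n)$ (here using that Zariski closure commutes with finite products in the sense that $\overline{A \times B} = \overline{A} \times \overline{B}$ in a product variety). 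Hence $\gr_{f_1\boxtimes\dots\boxtimes f_h} = \seg\bigl(\prod_i \gr_{f_i}(a_i,n)\bigr)$ is isomorphic, as a projective variety, to $\prod_i \gr_{f_i}(a_i,n)$ via (the inverse of) the Segre embedding, which is an isomorphism onto its image.

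The main obstacle, and the point needing care rather than cleverness, is the interchange of Zariski closure with the product and with the Segre map: one must check that the Segre embedding restricted to $\prod_i \gr_{f_i}(a_i,n)$ really does have image equal to $\overline{\imm(\widehat{f_1\boxtimes\dots\boxtimes f_h})}$, i.e. that no extra points appear or disappear. This follows because a closed immersion is in particular a closed map and an open map onto its image, so $g(\overline{A}) = \overline{g(A)}$ for $g$ a closed immersion and $A$ any subset of the source; combined with $\overline{\prod_i A_i} = \prod_i \overline{A_i}$ (valid for products of finitely many varieties over $\C$, since the product topology is coarser than the Zariski topology on the product but the two agree on closures of products of constructible-type sets — more simply, $\overline{A_1 \times A_2} \subseteq \overline{A_1} \times \overline{A_2}$ is clear, and the reverse uses that $\overline{A_1} \times A_2$ and then $\overline{A_1} \times \overline{A_2}$ are closures obtained fibrewise). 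A clean way to sidestep delicacy is to invoke Proposition \ref{corollario chiuso} where applicable, or simply to argue at the level of the irreducible parametrizations: $\prod_i \gr_{f_i}(a_i,n)$ is irreducible (product of irreducible varieties by Proposition \ref{teorema irriducibile}), it contains the dense subset $\prod_i \imm(\widehat{f_i})$, its Segre image is irreducible and closed, contains the dense image $\imm(\widehat{f_1\boxtimes\dots\boxtimes f_h})$, hence equals $\gr_{f_1\boxtimes\dots\boxtimes f_h}(k,n)$; the Segre embedding gives the desired isomorphism. Finally, transporting back through $\Phi$ via Proposition \ref{prop invertibile} completes the proof.
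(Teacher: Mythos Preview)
The paper states Proposition \ref{teorema segri} without proof, so there is no argument to compare against. Your approach is the natural one and is essentially correct: factor $\seg(k,n)$ through the block decomposition, observe that $(f_1\otimes\dots\otimes f_h)$ acts block-wise so that $\imm(\widehat{f_1\otimes\dots\otimes f_h})$ is the Segre image of $\prod_i \imm(\widehat{f_i})$, and then pass to Zariski closures using that the Segre embedding is a closed immersion together with $\overline{A\times B}=\overline{A}\times\overline{B}$. The clean version you give at the end---$\prod_i \gr_{f_i}(a_i,n)$ is irreducible and closed, its Segre image is closed and contains the dense set $\imm(\widehat{f_1\otimes\dots\otimes f_h})$, hence equals its closure---is the right way to phrase it and avoids the hand-wringing about commuting closures that you flag as the ``main obstacle.''

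Two minor clean-ups. First, the opening clause ``Write $N := n^{a_1}+\dots+n^{a_h}$\dots'' is vestigial and should be deleted; you never use $N$, and it is not the dimension of anything relevant. Second, the invocation of Proposition \ref{prop invertibile} via $\Phi$ is unnecessary: the map $\Phi$ is literally the identity once one unfolds the paper's definition of $f_1\otimes\dots\otimes f_h$ on basis vectors, so there is nothing to transport. Dropping both of these would make the write-up tighter without changing the mathematics.
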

The following corollary provides a realization of a product of projective spaces as a variety $\gr_f(k,n)$.
\begin{cor} \label{corollario segri}
Let $f_1,\ldots, f_k \in \End(V)\setminus \{0\}$. Then, as projective varieties,
$$\gr_{f_1 \otimes \ldots \otimes f_k}(k,n) \simeq \PP(\C^{b_1}) \times \PP(\C^{b_2}) \times \ldots \times \PP(\C^{b_k}),$$ where $b_i:=\dim\left(\imm(f_i)\right)$,
for all $i\in [k]$.
\end{cor}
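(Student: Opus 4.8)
The plan is to deduce Corollary \ref{corollario segri} directly from Proposition \ref{teorema segri} by specializing the decomposition. First I would take $h:=k$ and $a:=(1,1,\ldots,1)\in\N^k$, so that $a(i)=i$ for all $i\in[k]$ and $a(h)=k$, which matches the ambient dimension of the tensor power. With this choice each $a_i=1$, so $V^{\otimes a_i}=V$, and the hypothesis $f_i\in\End(V^{\otimes a_i})\setminus\{0\}=\End(V)\setminus\{0\}$ is exactly the hypothesis of the corollary. Proposition \ref{teorema segri} then yields, as projective varieties,
$$\gr_{f_1\otimes\ldots\otimes f_k}(k,n)\simeq\gr_{f_1}(1,n)\times\ldots\times\gr_{f_k}(1,n).$$

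The remaining task is to identify each factor $\gr_{f_i}(1,n)$ with $\PP(\C^{b_i})$, where $b_i:=\dim(\imm(f_i))$. For $k=1$ the Segre embedding $\mathrm{seg}^{1,n}\colon\PP(V)\to\PP(V^{\otimes 1})=\PP(V)$ is the identity, so $\seg(1,n)=\PP(V)$. Hence, for $f_i\in\End(V)\setminus\{0\}$, the map $\widehat{f_i}$ is defined on $\PP(V)\setminus\PP(\ker f_i)$ and sends $[v]$ to $[f_i(v)]$; its image is $\PP(\imm(f_i))\setminus\{0\}$-classes, i.e. all of $\PP(\imm(f_i))$, since every nonzero vector in $\imm(f_i)$ is $f_i(v)$ for some $v\notin\ker f_i$. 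Taking Zariski closure is harmless here because a projective space is already closed, so $\gr_{f_i}(1,n)=\PP(\imm(f_i))$. Finally $\imm(f_i)$ is a linear subspace of $V$ of dimension $b_i$, hence $\PP(\imm(f_i))\simeq\PP(\C^{b_i})$ as projective varieties via any linear isomorphism $\imm(f_i)\xrightarrow{\sim}\C^{b_i}$. Substituting these identifications into the displayed product gives the claim.

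I do not anticipate a genuine obstacle: the corollary is essentially a reading of Proposition \ref{teorema segri} in the base case $a_i=1$, combined with the elementary observation that $\gr_f(1,n)$ is the projectivization of the image of $f$. The only point deserving a line of care is the passage to the Zariski closure in Definition \ref{definizione}, which is why I would remark (or invoke Proposition \ref{corollario chiuso}, noting $\seg(1,n)\cap\PP(\ker f_i)=\PP(\ker f_i)$ need not be empty, so instead one argues directly) that $\imm(\widehat{f_i})$ is already the full projective subspace $\PP(\imm(f_i))$ and therefore closed. Everything else is functoriality of the product of projective varieties under isomorphism of each factor.
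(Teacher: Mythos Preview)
Your proposal is correct and follows exactly the intended route: the paper presents Corollary \ref{corollario segri} as an immediate consequence of Proposition \ref{teorema segri} without a separate proof, and your argument simply spells out the specialization $h=k$, $a=(1,\ldots,1)$ together with the elementary identification $\gr_{f_i}(1,n)=\PP(\imm(f_i))\simeq\PP(\C^{b_i})$. Your care about the Zariski closure step (arguing directly that $\imm(\widehat{f_i})=\PP(\imm(f_i))$ is already closed, rather than invoking Proposition \ref{corollario chiuso}) is appropriate.
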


\subsection{Immanant varieties}
In this section we introduce the class of varieties $\gr_f(k,n)$ in which we are mostly interested, namely the ones corresponding to an endomorphism of $V^{\otimes k}$ arising from an action of the symmetric group $S_k$.
For $k>0$, a permutation  $w \in S_k$ induces a graded automorphism of the poset $[n]^k$, where the action is defined by
$$w(x)=\left(x_{w^{-1}(1)},\ldots,x_{w^{-1}(k)}\right),$$  for all $x\in [n]^k$. Hence $w\in S_k$ acts on $V^{\otimes k}$ by setting
$w(e_x)=e_{w(x)}$, for all $x\in [n]^k$.
This action
induces an algebra morphism
$\gamma:\C[S_k] \rightarrow \End\left(V^{\otimes k}\right)$, where $\C[S_k]$ is the group algebra over $\C$ of $S_k$.

Let $G \subseteq S_k$ be a group. It is clear that the group algebra $\C[G]$ is a subalgebra of $\C[S_k]$. For $P\in \C[S_k]$,
we set $P^{(n)}:=\gamma(P) \in \End\left(V^{\otimes k}\right)$; hence, if $P=\sum_{g \in G}a_g g$, with $a_g \in \C$ for all $g\in G$, we have $$P^{(n)}(e_x)=\sum_{g \in G}a_g e_{g(x)},$$ for every $x \in [n]^k$. The set $\{P^{(n)}: P\in \C[S_n]\}$ coincides with the set of endomorphisms of $V^{\otimes k}$ commuting with the action of $\gl(V)$ on  $V^{\otimes k}$ defined by setting $h(v_1 \otimes \ldots \otimes v_k)=h(v_1) \otimes \ldots \otimes h(v_k)$, for all $h\in \gl(V)$, $v_1,\ldots, v_k \in V$ (see e.g. \cite[Theorem 8.2.8]{scarabotti}).

For a group $G$, let $\mathbf{1}_G : G \rightarrow \{1\}$ be its trivial character. If $\chi_1 : G \rightarrow \C$ and $\chi_2 : G \rightarrow \C$ are characters, their scalar product is defined by $\langle \chi_1, \chi_2 \rangle := \sum_{g\in G} \chi_1(g)\chi_2(g^{-1})$.
If $\chi$ is a simple character of a subgroup $G\subseteq S_k$, then the element
$$P_{\chi}:=\frac{\chi(e)}{|G|} \sum_{g\in G}\chi(g^{-1})g \in \C[S_k]$$
is an idempotent corresponding to the induced representation $\mathrm{Ind}^{S_k}_{G}(\chi)$ of $S_k$. For such idempotents, we write $\gr_{\chi}(k,n)$
instead of $\gr_{P_\chi^{(n)}}(k,n)$. The following are well-known examples of varieties recovered in this way.

$\,$

\textbf{Segre embeddings:} if $G=\{e\}$, the trivial subgroup of $S_k$, there is only the trivial character $\chi:=\mathbf{1}_G$. In this case $P_{\chi}^{(n)}=\id_{V^{\otimes k}}$ and $\gr_\chi(k,n)=\seg(k,n)$.

$\,$

\textbf{Grassmannians:} if $G=S_k$ and $\chi$ is its alternating character,
then $$P_\chi= \frac{1}{k!}\sum_{\sigma \in S_k}\mathrm{sgn}(\sigma)\sigma,$$ where $\mathrm{sgn}(\sigma)$ denotes the sign of the permutation $\sigma$. Hence, as vector spaces, $\imm\left(P_{\chi}^{(n)}\right) \simeq \bigwedge^{k} V$ and, for $k \leq n$, $\gr_{\chi}(k,n)=\imm\left(\widehat{P}^{(n)}_\chi\right)$ is the complex Grassmannian $\gr_{\C}(k,n)$; for $k>n$ it is clear that $\gr_{\chi}(k,n)=\varnothing$.

$\,$

{\bf Chow varieties} $G(1,k,n)$: another important class of varieties are the so-called {\em Chow varieties} $G(1,k,n)$, the projectivization of the set of homogeneous polynomial of degree $k$ in $n$ variables factorizing in polynomials of degree $1$.
These varieties are recovered as follows.
If $G=S_k$ and $\chi:=\mathbf{1}_{S_k}$,
then $$P_\chi= \frac{1}{k!}\sum_{\sigma \in S_k}\sigma.$$ Hence, as vector spaces, $\imm\left(P_{\chi}^{(n)}\right) \simeq \mathrm{Sym}^{k} V$ and $\gr_{\chi}(k,n)=\imm\left(\widehat{P}^{(n)}_\chi\right)$ is the Chow variety $G(1,k,n)$. For more details see e.g. \cite[Chapter~4]{discriminant} and \cite[Section~8.6]{Landsberg}.

$\,$

The following construction realizes a Cartesian product of projective spaces differently with respect to Corollary \ref{corollario segri}.

$\,$

{\bf Cartesian product of projective spaces}: let $h\in \N$,
$a \in \N^h$ and $k:=a(h)$, where $a(j):=\sum_{i=1}^ja_i$, for all $j\in [h]$. The symmetric group $S_k$ is generated by the simple transpositions
$\{s_1,\ldots,s_{k-1}\}$; for $i,j \in [k-1]$, $i\leqslant j$, define the parabolic subgroup $$S_{[i,j]}:= \left\{
  \begin{array}{ll}
    \langle s_i,s_{i+1},\ldots, s_{j-1}\rangle, & \hbox{ if $i<j$;} \\
    \{e\}, & \hbox{ if $i=j$.}
\end{array}\right.$$ We define the parabolic subgroup $G_a \subseteq S_k$ by
$$G_a:=S_{[1,a(1)]} \times S_{[a(1)+1,a(2)]} \times \ldots \times S_{[a(h-1)+1,a(h)]}.$$ Hence
$P_{\textbf{1}_{G_a}}=\frac{1}{a_1!\cdots a_h!}\sum_{g\in G_a}g \in \C[S_k]$ is the idempotent corresponding to the Young module $M^a$ (see e.g. \cite[Section 3.6.2]{scarabotti}).
Since, for $k\geqslant 1$, the Chow variety $G(1,k,2)$ is isomorphic to $\PP(\C^{k+1})$ (because any homogeneous polynomial of positive degree in two variables factorizes as product of degree one polynomials), we obtain
\begin{equation} \label{prodotto proiettivi}
    \gr_{\textbf{1}_{G_a}}(k,2) \simeq \PP(\C^{a_1+1}) \times \PP(\C^{a_2+1}) \times \ldots \times \PP(\C^{a_h+1}).
\end{equation}

 For a simple character $\chi$ of $G\subseteq S_k$, define $V^{\otimes k}_{\chi}:=\imm\left({P^{(n)}_{\chi}}\right)$.
 We denote by $\mathrm{Res}^G_{H} (\chi)$
 the restriction to a group $H\subseteq G$ of a character $\chi$ of $G$. Given an action of a group $G$ on a set $X$, let
 $G_x$ be the \emph{isotropy group} of the element $x\in X$,
 i.e. $G_x:=\{g\in G: g(x)=x\}$.

 A set of generators and the dimension of the vector space $V^{\otimes k}_{\chi}$ are known, see \cite[Eq. 6.13 and 6.23]{Merris}.
\begin{thm} \label{teorema span}
Let $G\subseteq S_k$ be a subgroup and $\chi$ a simple character of $G$. Then
$P^{(n)}_{\chi}(e_x)=0$ if and only if $\left \langle\mathrm{Res}^G_{G_x} (\chi) , \mathbf{1}_{G_x} \right\rangle = 0$, for every $x \in [n]^k$,
and
$$\dim\left(V^{\otimes k}_{\chi}\right)=\frac{\chi(e)}{|G|}\sum \limits_{g \in G}\chi(g)n^{c(g)},$$ where $c(g)$ is the number of cycles of the permutation $g \in S_k$.
\end{thm}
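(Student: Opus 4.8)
The plan is to analyze the operator $P^{(n)}_\chi$ by decomposing the basis $\{e_x : x \in [n]^k\}$ of $V^{\otimes k}$ into $G$-orbits, since $P^{(n)}_\chi$ is built from the $G$-action. First I would observe that $G$ acts on the index set $[n]^k$, and $P^{(n)}_\chi$ maps the span of a single $G$-orbit $Gx$ into itself; indeed $P^{(n)}_\chi(e_x) = \frac{\chi(e)}{|G|}\sum_{g\in G}\chi(g^{-1})e_{g(x)}$, which is a linear combination of basis vectors indexed by $Gx$. So $V^{\otimes k} = \bigoplus_{\mathcal{O}} W_{\mathcal O}$, where the sum runs over $G$-orbits $\mathcal O$ in $[n]^k$ and $W_{\mathcal O} = \spn_\C\{e_y : y \in \mathcal O\}$, and $V^{\otimes k}_\chi = \bigoplus_{\mathcal O} P^{(n)}_\chi(W_{\mathcal O})$. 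Fixing an orbit with representative $x$ and isotropy group $H := G_x$, there is a $G$-equivariant isomorphism $W_{Gx} \simeq \C[G/H] = \mathrm{Ind}^G_H(\mathbf{1}_H)$ sending $e_{g(x)} \mapsto gH$.

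The first claim then reduces to a representation-theoretic statement: $P^{(n)}_\chi$ restricted to $W_{Gx}$ is (up to the scalar $\chi(e)/|G|\cdot|G|/\chi(e)$ normalization built into the idempotent $P_\chi$) the projection of the $G$-module $\C[G/H]$ onto its $\chi$-isotypic component. Hence $P^{(n)}_\chi(e_x) = 0$ if and only if $\mathrm{Ind}^G_H(\mathbf{1}_H)$ contains no copy of the simple module with character $\chi$, which by Frobenius reciprocity is $\langle \mathrm{Ind}^G_H(\mathbf 1_H), \chi\rangle = \langle \mathbf 1_H, \mathrm{Res}^G_H(\chi)\rangle = 0$. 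One must be a little careful: $P_\chi$ as written is $\frac{\chi(e)}{|G|}\sum_g \chi(g^{-1})g$, which is the idempotent projecting onto the $\chi$-isotypic part of any $G$-module, so $P^{(n)}_\chi(e_x)=0$ exactly when the image of $e_x$ under this projection vanishes — and since $e_x$ generates $W_{Gx}$ as a $G$-module, this image is zero if and only if the whole $\chi$-isotypic part of $W_{Gx}$ is zero. I would also note $\langle \mathrm{Res}^G_{G_x}(\chi), \mathbf 1_{G_x}\rangle = \langle \mathbf 1_{G_x}, \mathrm{Res}^G_{G_x}(\chi)\rangle$ since these are real (indeed rational) for the pairing with the trivial character, so the asymmetric statement in the theorem is fine.

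For the dimension formula I would compute $\dim V^{\otimes k}_\chi = \dim P^{(n)}_\chi(V^{\otimes k}) = \mathrm{tr}(P^{(n)}_\chi)$, using that $P^{(n)}_\chi$ is idempotent. Now $\mathrm{tr}(P^{(n)}_\chi) = \frac{\chi(e)}{|G|}\sum_{g\in G}\chi(g^{-1})\,\mathrm{tr}(\gamma(g))$, and $\mathrm{tr}(\gamma(g))$ is the number of fixed points of $g$ acting on the basis $\{e_x\}$, i.e. the number of $x \in [n]^k$ with $g(x)=x$. Since $g$ permutes the $k$ tensor coordinates, $g(x)=x$ forces $x$ to be constant on each cycle of $g$, giving exactly $n^{c(g)}$ such $x$. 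Substituting, and replacing the sum over $g$ by the sum over $g^{-1}$ (which has the same cycle type, so $c(g^{-1})=c(g)$, and $\chi(g^{-1})$ ranges over the same values — or simply relabel), yields $\dim V^{\otimes k}_\chi = \frac{\chi(e)}{|G|}\sum_{g\in G}\chi(g)\,n^{c(g)}$, as claimed. The main obstacle is getting the representation-theoretic bookkeeping in the first part exactly right — in particular, correctly identifying $W_{Gx}$ with the induced module $\mathrm{Ind}^G_{G_x}(\mathbf 1_{G_x})$ and being precise about why vanishing of $P^{(n)}_\chi(e_x)$ is equivalent to vanishing of the $\chi$-component of the whole cyclic submodule it generates; the trace computation for the dimension is then routine.
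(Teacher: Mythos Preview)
Your proof is correct. Note, however, that the paper does not supply its own proof of this theorem; it is stated as a known result with a citation to Merris's \emph{Multilinear Algebra} (Eqs.\ 6.13 and 6.23 there). Your argument---identifying each orbit span $W_{Gx}$ with the permutation module $\mathrm{Ind}^G_{G_x}(\mathbf 1_{G_x})$, using that $P_\chi$ is the central idempotent of $\C[G]$ projecting onto the $\chi$-isotypic component together with the fact that $e_x$ is a cyclic generator of $W_{Gx}$ (whence $P_\chi e_x=0$ iff $P_\chi W_{Gx}=0$), then invoking Frobenius reciprocity; and computing $\dim V^{\otimes k}_\chi=\mathrm{tr}\,P^{(n)}_\chi$ via the fixed-point count $\mathrm{tr}\,\gamma(g)=n^{c(g)}$---is the standard one and matches the treatment in the cited reference. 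The one point you flag as a potential obstacle is harmless once you observe that $P_\chi$ is central in $\C[G]$, so $P_\chi\,\C[G]\,e_x=\C[G]\,P_\chi e_x$.
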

In particular, by Theorem \ref{teorema span}, we have that
$$V^{\otimes k}_{\chi}=\spn_{\C}\left\{ P^{(n)}_{\chi}(e_x) : \left\langle \mathrm{Res}^G_{G_x} (\chi) , \mathbf{1}_{G_x} \right\rangle \neq 0 \right\}.$$
For a more explicit description of the spanning set in Theorem \ref{teorema span} when $G=S_k$, see \cite[Theorem 6.37]{Merris}.
In general, this set is not a basis, as shown in the following example. Moreover, this is an example of immanant variety where $\gr_{\chi}(k,n) \neq \imm\left(\widehat{P}_{\chi}^{(n)}\right)$.

\begin{ex}
Let $n=2$, $k=3$, $G=S_3$ and $\chi$ the
two-dimensional simple character of $S_3$ corresponding to the partition $(2,1)$. Then
$P_\chi=\frac{1}{3}\left(2e-st-ts\right)$, where $s=213$ and $t=132$ are the simple transpositions (we write permutations in one-line notation). By Theorem \ref{teorema span}, $\dim\left(V^{\otimes k}_{\chi}\right)=4$. We have that $P^{(2)}_{\chi}(e_{(1,1,1)})=P^{(2)}_{\chi}(e_{(2,2,2)})=0$ and
\begin{itemize}
    \item $P^{(2)}_{\chi}(e_{(1,1,2)})=\frac{1}{3}\left(2e_{(1,1,2)}-e_{(1,2,1)}-e_{(2,1,1)}\right)$,
    \item $P^{(2)}_{\chi}(e_{(1,2,1)})=\frac{1}{3}\left(2e_{(1,2,1)}-e_{(2,1,1)}-e_{(1,1,2)}\right)$,
    \item $P^{(2)}_{\chi}(e_{(2,1,1)})=\frac{1}{3}\left(2e_{(2,1,1)}-e_{(1,1,2)}-e_{(1,2,1)}\right)$.
\end{itemize} Hence  $P^{(2)}_{\chi}(e_{(1,2,1)})=-P^{(2)}_{\chi}(e_{(1,1,2)})-P^{(3,2)}_{\chi}(e_{(2,1,1)})$ (similarly for $P^{(2)}_{\chi}\left(e_{(2,1,2)}\right)$) and a basis for $V^{\otimes 3}_{\chi}$ is $$\left\{P^{(2)}_{\chi}(e_{(1,1,2)}), P^{(2)}_{\chi}(e_{(2,1,1)}), P^{(2)}_{\chi}(e_{(2,2,1)}), P^{(2)}_{\chi}(e_{(1,2,2)})\right\}.$$
Parametric equations for $\imm\left(\widehat{P}^{(n)}_\chi\right)$ are
$$\left\{
  \begin{array}{ll}
    x_{111} = 0 \\
    x_{112} = 2a_1b_1c_2-a_2b_1c_1-a_1b_2c_1 \\
    x_{121} = 2a_1b_2c_1-a_1b_1c_2-a_2b_1c_1 \\
    x_{122} = 2a_1b_2c_2-a_2b_1c_2-a_2b_2c_1 \\
    x_{211} = 2a_2b_1c_1-a_1b_2c_1-a_1b_1c_2 \\
    x_{212} = 2a_2b_1c_2-a_2b_2c_1-a_1b_2c_2\\
    x_{221} = 2a_2b_2c_1-a_1b_2c_2-a_2b_1c_2\\
    x_{222} = 0
  \end{array}
\right.$$ where $(a_1,a_2),(b_1,b_2), (c_1,c_2)\in \C^2\setminus \{(0,0)\}$. Using Gr\"{o}bner basis method for implicitization, we find the following Cartesian equations for $\gr_{\chi}(3,2)$:

$$\left\{
  \begin{array}{ll}
    x_{112} + x_{121} + x_{211} = 0 \\
    x_{221} + x_{212} + x_{122} = 0 \\
    x_{111} = 0 \\
    x_{222} = 0
  \end{array}
\right.$$

These equations are the ones for $\PP(V^{\otimes 3}_\chi)$ in
$\PP(V^{\otimes 3})$; hence $\gr_{\chi}(3,2)=\PP(V^{\otimes 3}_\chi) \simeq \PP(\C^4)$.
It is not difficult to see that $\imm\left(\widehat{P}^{(n)}_\chi\right)\subsetneq \gr_{\chi}(3,2)$;
in fact it can be checked by hand that $$\left[P^{(2)}_{\chi}(e_{(1,1,2)})-P^{(2)}_{\chi}(e_{(2,1,1)})\right] \in \PP(V^{\otimes 3}_\chi) \setminus  \imm\left(\widehat{P}^{(n)}_\chi\right).$$
\end{ex}

$\,$

In the following, we are going to introduce a generalization of the immanant of a square matrix to matrices with arbitrary size, depending on a subgroup $G \subseteq S_k$ and a simple character of $G$. Our definition extends to submatrices the notion of generalized matrix function (see \cite[Chapter 7]{Merris}).

Let $k,m,n \in \N$; recall that $V$ is an $n$-dimensional complex vector space. If $W$ is an $m$-dimensional complex vector space and $f \in \Hom(V,W)$, the element $f^{\otimes k} \in \Hom(V^{\otimes k},W^{\otimes k})$ is defined by setting $$f^{\otimes k}(e_x)=f(e_{x_1}) \otimes \ldots \otimes f(e_{x_k}),$$ for all $x\in [n]^k$.

$\,$

\begin{oss}
   When $W=V$, we have that $(f\circ g)^{\otimes k}=f^{\otimes k}\circ g^{\otimes k}$, for all $f,g\in \End(V)$, and $(\id_V)^{\otimes k}=\id_{V^{\otimes k}}$, i.e. the function $f \mapsto f^{\otimes k}$ defines a monoid morphism $\End(V) \rightarrow \End(V^{\otimes k})$.
\end{oss}

 Given a matrix $M\in \mat_{m,n}(\C)$,  the matrix
 $M^{\otimes k} \in \mat_{m^k,n^k}(\C)$ is defined by setting
 $$\left(M^{\otimes k}\right)_{x,y}= \prod_{i=1}^k M_{x_i,y_i},$$ for all $x \in [m]^k$, $y \in [n]^k$, i.e. if $M$ is the matrix associated to $f \in \Hom(V,W)$ with respect to bases of $V$ and $W$, the matrix associated to $f^{\otimes k}$ is $M^{\otimes k}$.
 In the following definition we extend the notion of immanant of a square matrix.

 \begin{dfn} \label{def immanant}
   Let $x\in [m]^k$, $y\in [n]^k$ and $\chi$ a simple character of a group $G\subseteq S_k$. The $\chi_{x,y}$-\emph{immanant} of a matrix $M \in \mat_{m,n}(\C)$ is defined by
   $$\chi_{x,y}(M):=  \sum \limits_{g\in G}\chi(g)\left(M^{\otimes k}\right)_{g(x),y}.$$
 \end{dfn} Let $M \in \mat_{n,n}(\C)$, $k=n$ and $x=y=(1,\ldots,n)=:\vec{n}$. In this setting, for $G=S_n$, we recover some well-known numbers associated to $M$.
\begin{itemize}
\item If $\chi$ is the alternating character of $S_n$, then $\chi_{\vec{n},\vec{n}}(M)$ is the \emph{determinant} of the matrix $M$.
\item If $\chi:=\mathbf{1}_{S_n}$, then $\chi_{\vec{n},\vec{n}}(M)$ is the \emph{permanent}  of the matrix $M$.
\item If $\chi$ is any simple character of $S_n$, then $\chi_{\vec{n},\vec{n}}(M)$ is the so-called \emph{immanant} of the matrix $M$, see for instance \cite{stembridge} and references therein.
\end{itemize}

\begin{ex} \label{proponi allo slumato}
Let $M \in \mat_{2,3}(\C)$ be the generic matrix
$$M=\begin{pmatrix}
a_{11} & a_{12} & a_{13}\\
a_{21} & a_{22} & a_{23}
\end{pmatrix}.$$ Then, by ordering $[2]^2$ and $[3]^2$ lexicographically, the matrix $M^{\otimes 2}$ is $$ \begin{pmatrix}
a_{11}^2 & a_{11}a_{12} & a_{11}a_{13} & a_{11}a_{12} & a_{12}^2 & a_{12}a_{13} & a_{11}a_{13} & a_{12}a_{13} & a_{13}^2\\
a_{11}a_{21} & a_{11}a_{22} & a_{11}a_{23} & a_{12}a_{21} & a_{12}a_{22} & a_{12}a_{23} & a_{13}a_{21} & a_{13}a_{22} & a_{13}a_{23}\\
a_{21}a_{11} & a_{21}a_{12} & a_{21}a_{13} & a_{22}a_{11} & a_{22}a_{12} & a_{22}a_{13} & a_{23}a_{11} & a_{23}a_{12} & a_{23}a_{13}\\
a_{21}^2 & a_{21}a_{22} & a_{21}a_{23} & a_{22}a_{21} & a_{22}^2 & a_{22}a_{23} & a_{23}a_{21} & a_{23}a_{22} & a_{23}^2
\end{pmatrix}.$$

 If $\chi$ is the trivial character of $S_2$, we have $\chi_{(2,2),(2,3)}(M)=2a_{22}a_{23}$. If $\chi$ is the alternating character of $S_2$, we have $\chi_{(1,2),(1,3)}(M)=a_{11}a_{23}-a_{21}a_{13}$ and $\chi_{(2,2),(2,3)}(M)=0$.
\end{ex}


 Let $P\in \C[S_k]$; it is clear that $$f^{\otimes k} \circ P^{(n)} = P^{(m)} \circ f^{\otimes k},$$  i.e. $f^{\otimes k}\in \Hom_{\C[S_k]}\left(V^{\otimes k},W^{\otimes k}\right)$, for all $f\in \Hom(V,W)$.

 The $\chi_{x,y}$-immanant of a matrix $M$ is related to the $(x,y)$-entry of the matrix $M^{\otimes k}P^{(n)}_\chi$, as the next proposition asserts.
\begin{prop} \label{proposizione immananti} Let $x\in [m]^k$, $y\in [n]^k$, $\chi$ a simple character of a group $G\subseteq S_k$ and $M\in \mat_{m,n}(\C)$. Then
   $$\left(M^{\otimes k}P^{(n)}_\chi\right)_{x,y}=\frac{\chi(e)}{|G|}\chi_{x,y}(M).$$
 \end{prop}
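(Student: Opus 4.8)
The plan is to compute the matrix product $M^{\otimes k} P^{(n)}_\chi$ directly from the definitions and then read off the $(x,y)$-entry. First I would recall that $P_\chi = \frac{\chi(e)}{|G|}\sum_{g\in G}\chi(g^{-1})g \in \C[S_k]$, so that, applying the algebra morphism $\gamma : \C[S_k]\to \End(V^{\otimes k})$, we get $P^{(n)}_\chi = \frac{\chi(e)}{|G|}\sum_{g\in G}\chi(g^{-1})\,g^{(n)}$, where $g^{(n)}$ is the permutation endomorphism acting by $g^{(n)}(e_z)=e_{g(z)}$ for all $z\in[n]^k$. In matrix terms with respect to the basis $\{e_z : z\in[n]^k\}$, the endomorphism $g^{(n)}$ is a permutation matrix; I would pin down its entries as $\left(g^{(n)}\right)_{z,y} = \delta_{z, g(y)}$ (equivalently $\delta_{g^{-1}(z),y}$), being careful about which convention makes $g\mapsto g^{(n)}$ a left action — this bookkeeping is the one genuine point of friction.

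Next I would expand the $(x,y)$-entry of the product:
\begin{equation*}
\left(M^{\otimes k}P^{(n)}_\chi\right)_{x,y} = \sum_{z\in[n]^k}\left(M^{\otimes k}\right)_{x,z}\left(P^{(n)}_\chi\right)_{z,y} = \frac{\chi(e)}{|G|}\sum_{g\in G}\chi(g^{-1})\sum_{z\in[n]^k}\left(M^{\otimes k}\right)_{x,z}\,\delta_{z,g(y)}.
\end{equation*}
The inner sum over $z$ collapses to $\left(M^{\otimes k}\right)_{x,g(y)}$, giving
\begin{equation*}
\left(M^{\otimes k}P^{(n)}_\chi\right)_{x,y} = \frac{\chi(e)}{|G|}\sum_{g\in G}\chi(g^{-1})\left(M^{\otimes k}\right)_{x,g(y)}.
\end{equation*}

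Finally I would reconcile this with Definition \ref{def immanant}, which reads $\chi_{x,y}(M) = \sum_{g\in G}\chi(g)\left(M^{\otimes k}\right)_{g(x),y}$. To match the two expressions I would reindex the sum by $g\mapsto g^{-1}$, so $\sum_{g\in G}\chi(g^{-1})\left(M^{\otimes k}\right)_{x,g(y)} = \sum_{g\in G}\chi(g)\left(M^{\otimes k}\right)_{x,g^{-1}(y)}$, and then use the key symmetry of the tensor-power matrix under the simultaneous $S_k$-action on both indices, namely $\left(M^{\otimes k}\right)_{x,g^{-1}(y)} = \left(M^{\otimes k}\right)_{g(x),y}$, which follows because $\left(M^{\otimes k}\right)_{x,y}=\prod_{i=1}^k M_{x_i,y_i}$ is invariant under permuting the index pairs $(x_i,y_i)$ together — permuting the factors of the product does not change it. Combining these gives exactly $\frac{\chi(e)}{|G|}\chi_{x,y}(M)$. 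The main obstacle is purely notational: keeping the action conventions (the relation between $w(x)$, $g^{(n)}$, and the left/right placement of $P^{(n)}_\chi$) consistent throughout so that the permutation-matrix entries and the reindexing land with the correct inverses; the underlying algebra is routine once that is fixed.
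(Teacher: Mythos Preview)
Your proof is correct and is essentially the same argument as the paper's. The only organizational difference is that the paper invokes the commutation $M^{\otimes k}P^{(n)}_\chi = P^{(m)}_\chi M^{\otimes k}$ (stated just before the proposition) at the outset and then reads off the $(x,y)$-entry of $P^{(m)}_\chi M^{\otimes k}$, whereas you compute the entry of $M^{\otimes k}P^{(n)}_\chi$ directly and use the entry-level form of that same commutation, namely $\left(M^{\otimes k}\right)_{x,g^{-1}(y)} = \left(M^{\otimes k}\right)_{g(x),y}$, at the end.
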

\begin{proof}
  Let $y\in [n]^k$ and $\{w_i: i \in [m]\}$ be a basis of the $m$-dimensional vector space $W$. We have that
 \begin{eqnarray*}
   M^{\otimes k}P^{(n)}_\chi e_y &=& P^{(m)}_\chi M^{\otimes k}e_y \\
   &=&   \frac{\chi(e)}{|G|} \sum \limits_{g\in G}\chi(g^{-1})\sum \limits_{x\in [m]^k}  \left(M^{\otimes k}\right)_{x,y}w_{g(x)} \\
   &=&  \frac{\chi(e)}{|G|} \sum \limits_{g\in G}\chi(g^{-1})\sum \limits_{x\in [m]^k} \left(M^{\otimes k}\right)_{g^{-1}(x),y}w_x \\
   &=&  \frac{\chi(e)}{|G|} \sum \limits_{x\in [m]^k}\left( \sum \limits_{g\in G}\chi(g) \left(M^{\otimes k}\right)_{g(x),y}\right)w_x.
 \end{eqnarray*}
\end{proof}

 \begin{oss}
 For $P\in \C[S_k]$, we can define, by restriction, an element $f^{\otimes k}_P \in  \Hom\left(\imm(P^{(n)}),\imm(P^{(m)})\right)$; if $P=P_{\chi}$ for some simple character $\chi$ of a group $G\subseteq S_k$,
 then $f^{\otimes k}_P \in \Hom_{\C[G]}\left(\imm(P^{(n)}),\imm(P^{(m)})\right)$, since $gP_{\chi}=P_{\chi}g$, for all $g\in G$, and then $\imm(P^{(n)})$, $\imm(P^{(m)})$ are $\C[G]$-modules. See \cite{induced operators} for a more extended treatment on such induced morphisms.
 \end{oss}
Now we provide parametric equations for the set $\imm\left(\widehat{P}^{(n)}_\chi\right)$ in terms of immanants.
Let $A\in \mat_{n,k} (\C)$ defined by
$$A:=\left(
          \begin{array}{cccc}
            a_{11} & a_{12} & \cdots & a_{1k} \\
            a_{21} & a_{22} & \cdots & a_{2k} \\
            \vdots & \vdots & \cdots & \vdots \\
            a_{n1} & a_{n2} & \cdots & a_{nk} \\
          \end{array}
        \right)
$$ where none of the columns is the zero vector.
\begin{thm} \label{equazioni} Let $k,n \in \N$ and $\chi$ be a simple character of
a group $G \subseteq S_k$. Then
  the set $\imm(\widehat{P}^{(n)}_\chi) \subseteq \PP\left(V^{\otimes k}\right)$ is described by the parametric equations
  $$\left\{x_z = {\chi}_{z,(1,\ldots,k)}(A) : z \in [n]^k\right\},$$ where $A$ is the generic matrix defined above.
\end{thm}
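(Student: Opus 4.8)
The plan is to compute, for a generic point of $\imm(\widehat{P}^{(n)}_\chi)$, its homogeneous coordinates in the basis $\{e_z:z\in[n]^k\}$ of $V^{\otimes k}$, and to recognize each such coordinate as a $\chi_{z,(1,\ldots,k)}$-immanant of the generic matrix $A$.

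First I would describe the source of the parametrization. By the definitions of $\seg(k,n)$ and of $\widehat{P}^{(n)}_\chi$, the set $\imm(\widehat{P}^{(n)}_\chi)$ is exactly $\{[P^{(n)}_\chi(v_1\otimes\cdots\otimes v_k)]:v_1,\ldots,v_k\in V\setminus\{0\},\ P^{(n)}_\chi(v_1\otimes\cdots\otimes v_k)\neq 0\}$. Writing $v_i=\sum_{l\in[n]}a_{li}e_l$ identifies the admissible tuples $(v_1,\ldots,v_k)$ with the matrices $A=(a_{li})\in\mat_{n,k}(\C)$ none of whose columns is the zero vector; the remaining condition $P^{(n)}_\chi(v_1\otimes\cdots\otimes v_k)\neq 0$ is precisely the requirement that the coordinate tuple produced below be nonzero, which is the standard tacit hypothesis on the parameters of a parametric projective set.

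Next comes the computation. Expanding $v_1\otimes\cdots\otimes v_k=\sum_{y\in[n]^k}\bigl(\prod_{i\in[k]}a_{y_i i}\bigr)e_y$ and applying $P^{(n)}_\chi=\frac{\chi(e)}{|G|}\sum_{g\in G}\chi(g^{-1})g$ with $g(e_y)=e_{g(y)}$, I would swap the two sums, substitute $z=g(y)$, and then replace the summation variable $g$ by $g^{-1}$; the coefficient of $e_z$ becomes $\frac{\chi(e)}{|G|}\sum_{g\in G}\chi(g)\prod_{i\in[k]}a_{(g(z))_i\, i}$. Since $\prod_{i\in[k]}a_{(g(z))_i\, i}=\bigl(A^{\otimes k}\bigr)_{g(z),(1,\ldots,k)}$ by the definition of $A^{\otimes k}$, Definition \ref{def immanant} identifies this coefficient with $\frac{\chi(e)}{|G|}\chi_{z,(1,\ldots,k)}(A)$. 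Equivalently, one may note that $v_1\otimes\cdots\otimes v_k$ is the column of $A^{\otimes k}$ indexed by $(1,\ldots,k)$, invoke the intertwining $P^{(n)}_\chi\circ A^{\otimes k}=A^{\otimes k}\circ P^{(k)}_\chi$ to see that $P^{(n)}_\chi(v_1\otimes\cdots\otimes v_k)$ is the $(1,\ldots,k)$-column of $A^{\otimes k}P^{(k)}_\chi$, and read off its $z$-entry from Proposition \ref{proposizione immananti}.

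Finally, because $\chi(e)/|G|\neq 0$, passing to $\PP(V^{\otimes k})$ gives $[P^{(n)}_\chi(v_1\otimes\cdots\otimes v_k)]=\bigl[\sum_{z\in[n]^k}\chi_{z,(1,\ldots,k)}(A)\,e_z\bigr]$, i.e. the point with coordinates $x_z=\chi_{z,(1,\ldots,k)}(A)$; letting $A$ run over all generic matrices with no zero column (and with nonzero coordinate tuple) recovers $\imm(\widehat{P}^{(n)}_\chi)$, which is the assertion. The argument is essentially bookkeeping; the only points deserving care are keeping straight which integer plays which role (the tensor factors of the input live over a $k$-dimensional space, while the output lives over the $n$-dimensional space $V$) and checking well-definedness on $\PP(V)^{\times k}$: since $\chi_{z,(1,\ldots,k)}(A)$ is multihomogeneous of degree one in the columns of $A$, rescaling the $v_i$ rescales the whole coordinate tuple by a single common factor.
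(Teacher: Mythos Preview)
Your proof is correct and essentially matches the paper's: the paper introduces the auxiliary $k$-dimensional space $W$, views $A\in\Hom(W,V)$, applies the intertwining $P^{(n)}_\chi A^{\otimes k}=A^{\otimes k}P^{(k)}_\chi$ to $\tilde{e}_{(1,\ldots,k)}$, and then reads off the coordinates via Proposition~\ref{proposizione immananti}---precisely the alternative you describe in your ``Equivalently, one may note\ldots'' sentence. Your primary direct expansion is simply the same computation with Proposition~\ref{proposizione immananti} unwound inline, so there is no substantive difference in approach.
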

\begin{proof} Let $v_i=a_{1i}e_1+\ldots +a_{ni}e_n$, for all $i\in [k]$, and let $W$ be a vector space with basis $\{\tilde{e}_1,\ldots,\tilde{e}_k\}$. Then
$A \in \Hom(W,V)$ and, by Proposition \ref{proposizione immananti},
\begin{eqnarray*}
  P^{(n)}_\chi(v_1 \otimes \ldots \otimes v_k) &=& P^{(n)}_\chi\left( (A\tilde{e}_1)\otimes \ldots \otimes (A\tilde{e}_k)\right) \\
  &=& P^{(n)}_\chi A^{\otimes k} \tilde{e}_{(1,...,k)} \\
  &=&  A^{\otimes k}P^{(k)}_\chi \tilde{e}_{(1,...,k)} \\
  &=&\frac{\chi(e)}{|G|} \sum \limits_{x\in[n]^k}\chi_{x,(1,\ldots,k)}(A)\tilde{e}_x.
\end{eqnarray*}
\end{proof}

The statement of Theorem \ref{equazioni} leads us to the following definition.
\begin{dfn} \label{definizione varieta immananti}
If $\chi$ is a simple character of
a group $G \subseteq S_k$, we call $\gr_{\chi}(k,n)$
an \emph{immanant variety}.
\end{dfn}
We observe that, if two subgroups $G\subseteq S_k$ and $H\subseteq S_k$
are conjugated in $S_k$, i.e. $H=\sigma G\sigma^{-1}$ for some $\sigma \in S_k$, and $\chi$ is a character of $G$,
then, as projective varieties, $\gr_{\chi}(k,n)\simeq \gr_{\chi^\sigma}(k,n)$, where $\chi^\sigma(h):=\chi(\sigma^{-1} h \sigma)$, for all $h\in H$. In fact $P_{\chi^{\sigma}}=\sigma P_{\chi}\sigma^{-1}$. Let $f:=\gamma(P_{\chi}\sigma^{-1}) \in \End\left(V^{\otimes k}\right)$, where $\gamma:\C[S_k] \rightarrow \End\left(V^{\otimes k}\right)$ is the algebra morphism introduced above. Then
$\imm(\,\widehat{f}\,)=\imm(\widehat{P}^{(n)}_{\chi})$ and
$$\gr_{\chi^{\sigma}}(k,n)=\gr_{\gamma(\sigma)\circ f}(k,n)\simeq \gr_f(k,n)=\gr_{\chi}(k,n),$$ by Proposition \ref{prop invertibile}. In the following example we see that $G \simeq H$ and $\chi_G \simeq \chi_H$ do not imply $\gr_{\chi_G}(k,n) \simeq \gr_{\chi_H}(k,n)$ as projective varieties.

\begin{ex} \label{MA CHE E' STA PECIONATA}
  Let $G:=\{e,2134\} \subseteq S_4$ and $H:=\{e,4321\} \subseteq S_4$.
  Then $\gr_{\mathbf{1}_G}(4,2) \simeq \PP(\C^3) \times \PP(\C^2) \times \PP(\C^2)$, by \eqref{prodotto proiettivi}, because $G=G_{(2,1,1)}$. Hence it is a smooth four-dimensional projective variety. On the other hand, we checked by using Macaulay2 \cite{M2} that $\gr_{\mathbf{1}_H}(4,2)$ is a singular four-dimensional projective varieties.
  Then, as projective varieties, they are not isomorphic.
\end{ex}

\section{One-dimensional characters and $\chi$-matroids} \label{sezione unidimensionale}
In this section we restrict our attention to one-dimensional characters
of a subgroup $G \subseteq S_k$. This includes, for example, the trivial character of $G$ and all the simple characters of an abelian group.

For $x\in [n]^k$ we let $O_x:=\{g(x):g \in G\}\subseteq [n]^k$ and $\preccurlyeq_{\lex}$
the lexicographic order on $O_x$. Then we define
\begin{equation} \label{barra}
    \overline{x}:=\min\left(O_x,\preccurlyeq_{\lex}\right).
\end{equation}
For completeness, we give a proof of the following theorem, ensuring that, for one-dimensional characters, the spanning set of Theorem \ref{teorema span} provides a basis in a canonical way. It is known in another formulation, see \cite[Corollary 6.32]{Merris}.
\begin{thm} \label{teorema base}
Let $G\subseteq S_k$ be a subgroup and $\chi$ a one-dimensional character of $G$, i.e. a group morphism $\chi : G \rightarrow \C\setminus \{0\}$. Then $$\left\{P^{(n)}_{\chi}(e_{\overline{x}}) : x \in [n]^k,\, G_x \subseteq \ker(\chi) \right\}$$
is a basis of $V^{\otimes k}_{\chi}$.
\end{thm}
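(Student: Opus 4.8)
The plan is to show that the proposed set both spans $V^{\otimes k}_{\chi}$ and is linearly independent, the cardinality matching the known dimension formula from Theorem~\ref{teorema span}. First I would observe that, by Theorem~\ref{teorema span}, $V^{\otimes k}_{\chi}$ is spanned by $\{P^{(n)}_{\chi}(e_x) : \langle \mathrm{Res}^G_{G_x}(\chi),\mathbf{1}_{G_x}\rangle \neq 0\}$, and that for a one-dimensional $\chi$ the scalar product $\langle \mathrm{Res}^G_{G_x}(\chi),\mathbf{1}_{G_x}\rangle = \sum_{g\in G_x}\chi(g)$ equals $|G_x|$ if $G_x\subseteq\ker(\chi)$ and $0$ otherwise (standard orthogonality: a nontrivial one-dimensional character of $G_x$ sums to zero over $G_x$). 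So the nonzero elements among the $P^{(n)}_{\chi}(e_x)$ are exactly those with $G_x\subseteq\ker(\chi)$, which is the index set appearing in the statement.

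Next I would reduce the spanning set from all such $x$ to the canonical representatives $\overline{x}$. The key computation is that for $g\in G$ one has $P^{(n)}_{\chi}(e_{g(x)}) = \chi(g)^{-1}\,P^{(n)}_{\chi}(e_x)$: indeed $g\cdot P_{\chi} = \chi(g)^{-1}P_{\chi}$ in $\C[S_k]$ because $P_\chi = \frac{\chi(e)}{|G|}\sum_h \chi(h^{-1})h$ and left multiplication by $g$ reindexes the sum picking up the scalar $\chi(g)$ (care with the $\chi(g)$ vs $\chi(g)^{-1}$ convention, but it is a unit scalar in any case). Also note that whether $G_x\subseteq\ker(\chi)$ depends only on the orbit $O_x$, since $G_{g(x)} = gG_xg^{-1}$ and $\chi$ is one-dimensional hence its kernel is normal — so the condition is well-defined on orbits and $\overline{x}$ is an eligible representative whenever $x$ is. Therefore $P^{(n)}_{\chi}(e_x)$ and $P^{(n)}_{\chi}(e_{\overline x})$ are scalar multiples, and the set $\{P^{(n)}_{\chi}(e_{\overline x}) : G_x\subseteq\ker(\chi)\}$ already spans $V^{\otimes k}_{\chi}$.

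For linear independence, I would expand $P^{(n)}_{\chi}(e_{\overline x}) = \frac{\chi(e)}{|G|}\sum_{g\in G}\chi(g^{-1})e_{g(\overline x)}$ in the basis $\{e_y : y\in[n]^k\}$ of $V^{\otimes k}$. The supports of these vectors for distinct canonical representatives $\overline x$ lie in distinct $G$-orbits $O_{\overline x}$, and distinct orbits are disjoint subsets of $[n]^k$; hence vectors attached to distinct $\overline x$ have disjoint supports and a linear dependence would force each individual vector to vanish, contradicting $G_x\subseteq\ker(\chi)$ (which we showed is exactly the nonvanishing condition). This gives linear independence. Finally, I would match the count: the number of $G$-orbits $O_x$ with $G_x\subseteq\ker(\chi)$, weighted appropriately, must equal $\frac{\chi(e)}{|G|}\sum_{g}\chi(g)n^{c(g)}$; since $\chi(e)=1$, by Burnside-type counting $\sum_g \chi(g)n^{c(g)} = \sum_g \chi(g)\,|\mathrm{Fix}(g)|$ on $[n]^k$, which by a standard character-sum argument over orbits collapses to $|G|$ times the number of orbits on which $\mathrm{Res}^G_{G_x}(\chi)$ contains $\mathbf{1}_{G_x}$ — i.e. exactly our index set size. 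The main obstacle I anticipate is bookkeeping the orbit-counting identity cleanly (making the weighted character sum collapse to a plain orbit count), together with being consistent about the $\chi(g)$ versus $\chi(g^{-1})$ conventions so the scalar reindexing in step two is airtight; everything else is disjoint-support bookkeeping.
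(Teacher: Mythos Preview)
Your proposal is correct and follows essentially the same route as the paper: identify the nonvanishing condition $G_x\subseteq\ker(\chi)$ via the character sum over $G_x$, reduce to orbit representatives using the scalar relation $P^{(n)}_\chi(e_{g(x)})=\chi(g)P^{(n)}_\chi(e_x)$ (the paper's computation gives $\chi(g)$ rather than $\chi(g)^{-1}$, but as you note this is immaterial), and obtain linear independence from disjoint orbit supports. Your final Burnside-type dimension check is redundant once spanning and independence are both established, so you can safely drop it.
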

\begin{proof}
By Theorem \ref{teorema span} we have that $P^{(n)}_{\chi}(e_x)=0$
if and only if $\sum\limits_{g \in G_x} \chi(g)=0$ and this is equivalent to
$G_x \not \subseteq \ker(\chi)$. In fact, it is straightforward to see that $\sum\limits_{g \in G_x} \chi(g)=0$ implies $G_x \not \subseteq \ker(\chi)$. On the other hand,
let $h\in G_x \setminus \ker(\chi)$; therefore $\sum\limits_{g \in G_x} \chi(g)=\sum\limits_{g \in G_x} \chi(hg)=\chi(h)\sum\limits_{g \in G_x} \chi(g)$ and we find that
$(1-\chi(h))\sum\limits_{g \in G_x} \chi(g)=0$. This implies
$\sum\limits_{g \in G_x} \chi(g)=0$.

Let $x,y \in [n]^k$ with $G_x \subseteq \ker(\chi)$ and $G_y \subseteq \ker(\chi)$. Moreover assume $y=g(x)$ for some $g\in G$. Notice that
$$P^{(n)}_{\chi}(e_x)=\frac{1}{|G/G_x|}\sum \limits_{c \in G/G_x} \chi(\tilde{c}^{-1})e_{\tilde{c}(x)},$$ for all $x\in [n]^k$, where $\tilde{c}$ is any representative of the class $c \in G/G_x$. By using this formula, it can be easily shown that
$P^{(n)}_{\chi}(e_y)=\chi(g)P^{(n)}_{\chi}(e_x)$. It follows that it is sufficient to choose an element for each orbit $O_x$, for instance $\overline{x}$.
Since $\overline{x}_1,\overline{x}_2,...,\overline{x}_h \in [n]^k$ are in $h$ distinct orbits then clearly
$P^{(n)}_{\chi}(e_{\overline{x}_1})$, $P^{(n)}_{\chi}(e_{\overline{x}_2})$, $\ldots$, $P^{(n)}_{\chi}(e_{\overline{x}_h})$ are linearly independent. This concludes the proof.
\end{proof}

We define $$B_{\chi}(k,n):=\left\{\overline{x}: x \in [n]^k, \, G_x \subseteq \ker{\chi}\right\},$$
ordered by setting $$ \mbox{$x \preccurlyeq y$ if and only if $x \leqslant g(y)$},$$ for some $g \in G$, for all $x,y \in B_{\chi}(k,n)$, where $\leqslant$ is componentwise.
We need to prove that this is really a partial order.
\begin{prop}
  The relation $\preccurlyeq$ on $B_{\chi}(k,n)$ is a partial order.
\end{prop}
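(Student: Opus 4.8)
The plan is to verify the three axioms of a partial order, the last being the only nontrivial one. \textbf{Reflexivity} is immediate: $x \leqslant e(x) = x$, so $x \preccurlyeq x$ for every $x \in B_{\chi}(k,n)$. For \textbf{transitivity}, suppose $x \preccurlyeq y$ and $y \preccurlyeq z$, say $x \leqslant g(y)$ and $y \leqslant h(z)$ with $g,h \in G$. Since each element of $S_k$ acts on $[n]^k$ as an automorphism of the poset, $g$ is order preserving, so $g(y) \leqslant g(h(z)) = (gh)(z)$; hence $x \leqslant (gh)(z)$ and, as $G$ is a group, $gh \in G$, giving $x \preccurlyeq z$.

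The main point is \textbf{antisymmetry}. Assume $x \preccurlyeq y$ and $y \preccurlyeq x$, so $x \leqslant g(y)$ and $y \leqslant h(x)$ for some $g,h \in G$. Applying the order-preserving map $g$ to the second inequality and combining, $x \leqslant g(y) \leqslant g(h(x)) = (gh)(x)$. The key observation is that any $\sigma \in S_k$ merely permutes the entries of a tuple, so $\sum_{i=1}^k (\sigma(x))_i = \sum_{i=1}^k x_{\sigma^{-1}(i)} = \sum_{i=1}^k x_i$; thus $x$ and $\sigma(x)$ have the same coordinate sum, and a componentwise inequality $x \leqslant \sigma(x)$ forces $x = \sigma(x)$. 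Applying this with $\sigma = gh$ yields $x = (gh)(x)$, and since $x \leqslant g(y) \leqslant (gh)(x) = x$ we get in particular $x = g(y)$. Therefore $x \in O_y$, so $O_x = O_y$, and because $x = \overline{x}$ and $y = \overline{y}$ are both the $\preccurlyeq_{\lex}$-minimum of this common orbit, $x = y$.

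I expect the only subtle step to be isolating the coordinate-sum argument that upgrades $x \leqslant \sigma(x)$ to $x = \sigma(x)$; everything else is a routine use of the facts that $G$ acts by poset automorphisms of $[n]^k$ and that the $\overline{x}$ are canonical orbit representatives.
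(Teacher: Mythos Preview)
Your proof is correct and follows essentially the same route as the paper's: the paper also isolates the claim that $x \leqslant g(x)$ forces $g(x)=x$ (using the rank function of $[n]^k$, which is your coordinate sum shifted by $k$), and then handles reflexivity, antisymmetry, and transitivity exactly as you do. Your argument is, if anything, slightly more explicit in spelling out why $G$ acts by poset automorphisms and why equal orbits force equal canonical representatives.
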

\begin{proof} We claim that $x \leqslant g(x)$ if and only if $g\in G_x$. One implication is obvious. Let  $x \leqslant g(x)$ for some $g\in G$.
  Let $\rho$ be the rank function of the poset $([n]^k,\leqslant)$. Then $\rho(x)=\rho(g(x))$. This implies that $g(x)=x$.
  \begin{enumerate}
  \item reflexivity: clearly $x \leqslant e(x)=x$ and then  $x \preccurlyeq x$, for all $x\in B_{\chi}(k,n)$.
  \item symmetry: let $x \preccurlyeq y$ and $y\preccurlyeq x$. Then $x \leqslant g(y)$ and $y \leqslant h(x)$, for some $g,h \in G$.
  Therefore $x \leqslant g(y) \leqslant gh(x)$. Hence $x \leqslant gh(x)$ and by our claim we conclude that $gh(x)=x$. Then $x=g(y)$, i.e. $y\in O_x$ and $x=y$.
  \item transitivity: let $x \preccurlyeq y$ and $y \preccurlyeq z$; then $x \leqslant g(y)$ and $y \leqslant h(z)$ for some $g,h\in G$.
  This implies $x \leqslant g(y) \leqslant gh(z)$, i.e. $x \preccurlyeq z$.
\end{enumerate}
\end{proof}
For example, if $G=\{e\}$ is the trivial group,
then $B_{\chi}(k,n)=\left( [n]^k, \leqslant \right)$.
When $\chi$ is the alternating character of $S_k$, the poset $B_{\chi}(k,n)$
is isomorphic to the poset $S_n^{(k)}$ of Grassmannian permutations  with the Bruhat order (see \cite[Proposition 4.9]{BoloSenti}).

\begin{ex}\label{lyndon} [Lyndon words]
  Let $C_k:=\{e, k12 \ldots k-1, \ldots, 23\ldots 1\}\subseteq S_k$ be a cyclic group of order $k$, $\sigma:=k12 \ldots k-1 \in C_k$ and
  $\chi : C_k \rightarrow \C \setminus \{0\}$ the character defined by
  $\chi(\sigma^h)=\exp\left(\frac{2\pi h i}{k}\right)$, for all $h\in [k]$.
  Since $\ker(\chi)=\{e\}$, we have that $B_{\chi}(k,n)$ is the poset of \emph{Lyndon words} of length $k$ over the alphabet $[n]$. The poset $B_{\chi}(k,n)$ has maximum $(n-1,n,\ldots,n)$
  and minimum $(1,\ldots,1,2)$. The cardinality of $B_{\chi}(k,n)$ is given by Witt's formula:

  $$|B_{\chi}(k,n)|=\frac{1}{k}\sum\limits_{d \mid k} \mu(d)n^{k/d},$$ where $\mu$ is the M\"{o}bius function. For more information on Lyndon words, see e.g. \cite[Exercise 7.89]{Stenley}.
  The following is the Hasse diagram of $B_\chi(6,2)$.
    \begin{center}
 \begin{tikzpicture}[scale=1.9]
  \node (a1) at (3,5) {$(1,2,2,2,2,2)$};
  \node (b1) at (2,4) {$(1,1,2,2,2,2)$};
  \node (b2) at (4,4) {$(1,2,1,2,2,2)$};
  \node (c1) at (1,3) {$(1,1,2,2,1,2)$};
  \node (c2) at (3,3) {$(1,1,1,2,2,2)$};
  \node (c3) at (5,3) {$(1,1,2,1,2,2)$};
  \node (d1) at (2,2) {$(1,1,1,1,2,2)$};
  \node (d2) at (4,2) {$(1,1,1,2,1,2)$};
  \node (e1) at (3,1) {$(1,1,1,1,1,2)$};
  \draw (a1) -- (b1) (a1) -- (b2) (b1) -- (c1)
  (b1) -- (c2) (b1) -- (c3) (b2) -- (c1) (b2) -- (c2)
  (b2) -- (c3) (c1) -- (d1) (c1) -- (d2) (c2) -- (d1) (c2) -- (d2) (c3) -- (d1) (c3) -- (d2) (d1) -- (e1) (d2) -- (e1);
\end{tikzpicture}
  \end{center}
\end{ex}

Now we extend the notion of rank $k$ matroids to all one-dimensional characters $\chi$ of a group $G \subseteq S_k$. Let $\sigma \in S_n$; then $\sigma$ acts on $[n]^k$
by letting $$\sigma^*(x):=(\sigma(x_1),\ldots,\sigma(x_k)),$$ for all $x \in [n]^k$. Hence we have an action of $S_n$
on $V^{\otimes k}$, commuting with the previously defined action of $S_k$ on $V^{\otimes k}$.

\begin{dfn} \label{chi matroidi}
Let $X \subseteq B_{\chi}(k,n)$. We say that $X$ is a {\em $\chi$-matroid} if, for every $\sigma \in S_n$, the induced subposet $\left\{\overline{\sigma^*(x)}:x \in X\right\}\subseteq B_{\chi}(k,n)$ has a unique maximum, where for $x\in [n]^k$,  $\overline{x}$ is defined in \eqref{barra}.
\end{dfn}

\begin{oss} \label{recupero matroidi}
A characterization of matroids, due to Gale, is by their maximality property (see \cite[Theorem 1.3.1]{coxeter matroids}). Therefore, if $\chi$ is the alternating character of $G=S_k$, we recover the set of bases of a rank $k$ matroid on the ground set $[n]$.
\end{oss}

For a one-dimensional character $\chi$ of $G \subseteq S_k$ and $[v]\in \PP(V^{\otimes k}_{\chi})$, we define a set $\supp_\chi[v] \subseteq B_{\chi}(k,n)$ by letting $$\supp_\chi[v]:=\left\{x \in B_{\chi}(k,n): a_x \neq 0\right\},$$ where $v=\sum\limits_{x \in B_{\chi}(k,n)}a_x P^{(n)}_{\chi}(e_x)$.

$\,$

We now introduce the \emph{maximality property} for  $\imm\left(\widehat{P}^{(n)}_\chi \right)$, whenever $\chi$ is a one-dimensional character of a subgroup $G\subseteq S_k$.

\begin{dfn} Let $\chi$ be a one-dimensional character of a subgroup $G\subseteq S_k$. We say that $\imm\left(\widehat{P}^{(n)}_\chi \right)$ has the \emph{maximality property} if the induced subposet $\supp_\chi[v]$ is a $\chi$-matroid, for all $[v] \in \imm\left(\widehat{P}^{(n)}_\chi \right)$.
\end{dfn}

In the following example we see a case in which the maximality property does not hold.
\begin{ex} \label{non e chi matroide}
  Let $n=3$, $k=4$ and $G=\{e,g\}=\{e,3412\} \subseteq S_4$ and $\chi: G \rightarrow \C$ defined by setting $\chi(3412)=-1$.
  Then the action of the group $G$ on $V^{\otimes 4}$ is given by
   $g\left(e_{(i_1,i_2,i_3,i_4)}\right)=e_{(i_3,i_4,i_1,i_2)}$, for all $i_1, i_2, i_3, i_4 \in [3]$.
   We have that $P^{(3)}_{\chi}(e_{(3,3,3,3)})=0$ and that the poset $B_{\chi}(4,3)$ has maxima $(2,3,3,3)$ and $(3,2,3,3)$. Let $v:=P^{(3)}_{\chi}\left((e_2+e_3)\otimes (e_2+e_3) \otimes e_3 \otimes e_3\right)$; therefore
   \begin{eqnarray*}
     v &=& P^{(3)}_{\chi}(e_{(2,2,3,3)})+  P^{(3)}_{\chi}(e_{(2,3,3,3)})  +  P^{(3)}_{\chi}(e_{(3,2,3,3)}).
   \end{eqnarray*} Hence $\supp_\chi[v]=\{(2,2,3,3),(2,3,3,3),(3,2,3,3)\}$
   is not a $\chi$-matroid because this poset has two maximal elements.
\end{ex}

Several important varieties have the maximality property.
For example, the Grassmannian $\gr_{\C}(k,n)$ has this property. In fact, the elements of $\gr_{\C}(k,n)$ correspond to representable matroids of rank $k$ and matroids can be defined by their maximality property (\cite[Theorem 1.3.1]{coxeter matroids}). This is also the case for Segre varieties.

\begin{prop} \label{massimalità segre}
The set $\seg(k,n)$ has the maximality property.
\end{prop}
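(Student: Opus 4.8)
For $\seg(k,n)$ the relevant endomorphism is $f=\id_{V^{\otimes k}}$, so $P^{(n)}_\chi = \id$ with $\chi=\mathbf{1}_{\{e\}}$, $G=\{e\}$; hence by the remark following Theorem \ref{teorema base} we have $B_{\chi}(k,n)=([n]^k,\leqslant)$ and, because $G$ is trivial, $\overline{\sigma^*(x)}=\sigma^*(x)$ for every $\sigma\in S_n$. The plan is therefore to show directly that for any point $[v]\in\seg(k,n)$ the set $\supp_\chi[v]\subseteq [n]^k$ is a $\chi$-matroid, i.e. that $\{\sigma^*(x):x\in\supp_\chi[v]\}$ has a unique maximum in $([n]^k,\leqslant)$ for every $\sigma\in S_n$; since conjugating $\sigma$ merely relabels coordinates, it suffices to treat $\sigma=\id$ and show $\supp_\chi[v]$ itself has a unique maximal element.

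The key observation is the product structure of Segre points. Write $[v]=\mathrm{seg}^{k,n}([v_1],\ldots,[v_k])$ with $v_j=\sum_{i=1}^n a_{ij}e_i$; then $v=v_1\otimes\cdots\otimes v_k=\sum_{x\in[n]^k}\bigl(\prod_{j=1}^k a_{x_j,j}\bigr)e_x$, so $\supp_\chi[v]=\{x\in[n]^k : a_{x_j,j}\neq 0 \text{ for all } j\in[k]\}=S_1\times S_2\times\cdots\times S_k$, where $S_j:=\{i\in[n]:a_{ij}\neq 0\}$ is nonempty (no $v_j$ is zero). First I would record this factorization. Then the decisive step: a product of finite nonempty subsets of the poset $[n]$ (a chain) has a unique maximal element in the product order, namely $(\max S_1,\ldots,\max S_k)$, because $[n]$ is totally ordered so each $S_j$ has a greatest element and componentwise comparison makes the tuple of these greatest elements dominate every other tuple in the product. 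This gives the unique maximum for $\sigma=\id$; for general $\sigma\in S_n$ the same argument applies verbatim to the sets $\sigma(S_j)$, since $\{\sigma^*(x):x\in S_1\times\cdots\times S_k\}=\sigma(S_1)\times\cdots\times\sigma(S_k)$ and each $\sigma(S_j)$ is again a nonempty subset of the chain $[n]$.

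There is essentially no obstacle here: the only point needing a word of care is the possibility that $[v]\in\overline{\imm(\widehat f)}\setminus\imm(\widehat f)$, but by the simplest example discussed after Proposition \ref{corollario chiuso} we have $\gr_{\id}(k,n)=\imm(\widehat{\mathrm{id}})=\seg(k,n)$, so every point of $\seg(k,n)$ is genuinely of the product form above and the maximality property is checked on all of $\seg(k,n)$. One may optionally remark that, more is true: $\supp_\chi[v]$ is not just a $\chi$-matroid but literally a sublattice (a product of chains), which foreshadows the incidence-stratification results for trivial characters in the next section.
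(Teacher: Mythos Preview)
Your proof is correct and follows essentially the same approach as the paper: both arguments observe that $\supp_\chi[v_1\otimes\cdots\otimes v_k]$ factorizes as the product $S_1\times\cdots\times S_k$ of the individual supports and that its maximum is the tuple of componentwise maxima. Your treatment is in fact a bit more careful than the paper's, which asserts the reduction to $\sigma=\id$ without comment; the one imprecise remark in your argument (``conjugating $\sigma$ merely relabels coordinates'') is harmless since you then handle the general $\sigma$ explicitly via $\sigma(S_1)\times\cdots\times\sigma(S_k)$.
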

\begin{proof} For $k\in \N$, let $e(k)$ be the identity in $S_k$.
The statement is equivalent to say that $\supp_{\mathbf{1}_{\{e(k)\}}}[v_1 \otimes \ldots \otimes v_k]$ has a unique maximum for all $v_1,\ldots,v_k \in V\setminus \{0\}$. Let $v:=v_1 \otimes \ldots \otimes v_k$; hence
$$\max (\supp_{\mathbf{1}_{\{e(k)\}}}[v])=\left(\max (\supp_{\mathbf{1}_{\{e(1)\}}}[v_1]),\ldots,\max (\supp_{\mathbf{1}_{\{e(1)\}}}[v_k])\right)$$ and this concludes the proof.
\end{proof}
More in general, when $\chi$ is a trivial character, the set $\imm\left(\widehat{P}^{(n)}_\chi \right)$ has the maximality property, as we prove in the next section (see Corollary \ref{corollario massimalita}).
We end this section with one more definition.
\begin{dfn}
We say that a subset $X \subseteq B_{\chi}(k,n)$ is \emph{representable} over $\C$ if $X \in \left\{\supp_\chi[v]: [v]\in \imm\left(\widehat{P}^{(n)}_\chi \right)\right\}$.
\end{dfn}
Notice that $\imm\left(\widehat{P}^{(n)}_\chi \right)$ has the maximality property if and only all representable subsets $X \subseteq B_{\chi}(k,n)$ are $\chi$-matroids.
Differently to the Grassmannian case, there exist representable subsets in $B_{\chi}(k,n)$ which are not $\chi$-matroids, see Example \ref{non e chi matroide}.

\begin{oss} Let $m:=\dim\left(V_\chi^{\otimes k}\right)$.
If $\gr_{\chi}(k,n)=\imm\left(\widehat{P}^{(n)}_\chi \right)$ and the variety $\gr_{\chi}(k,n)$ has the maximality property, then the action on $\PP\left(V_\chi^{\otimes k}\right)$ of the group of invertible diagonal matrices of size $m$  (i.e. the incidence group of the trivial poset of cardinality $m$), provides an incidence stratification of $\gr_{\chi}(k,n)$ whose strata are in bijection with representable $\chi$-matroids. If $\chi$ is the alternating character of $S_k$, we recover the {\em matroidal strata} introduced in \cite{Gelfand}. These strata provide a geometric interpretation of the Tutte polynomials via the $K$-theory of Grassmannians, see \cite{fink}.
\end{oss}

\section{The combinatorics of $\gr_{\mathbf{1}_G}(k,n)$} \label{sezione triviale}

In this section we consider the trivial character $\mathbf{1}_{G}$ of a subgroup $G\subseteq S_k$.
In this case $B_{\mathbf{1}_{G}}(k,n)=\{\overline{x}: x\in [n]^k\}$ because the condition $G_x \subseteq \ker(\mathbf{1}_{G})=G$ (see Theorem \ref{teorema base}) is trivially satisfied. Moreover we have that $\PP\left(\ker\left(P^{(n)}_{\mathbf{1}_G}\right)\right)\cap \seg(k,n)=\varnothing$; in fact we can state the following more general result.

\begin{prop}
Let $P\in \C[S_k]$ be such that $P^{(n)}(e_x) \neq 0$ for all $x \in [n]^k$. Then
$\gr_{P^{(n)}}(k,n)=\imm(\widehat{P}^{(n)})$.
\end{prop}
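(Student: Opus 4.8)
The plan is to reduce this to Proposition \ref{corollario chiuso}, which says that $\seg(k,n)\cap \PP(\ker(f))=\varnothing$ implies $\gr_f(k,n)=\imm(\,\widehat{f}\,)$ for $f\in\End(V^{\otimes k})$. Setting $f:=P^{(n)}$, it therefore suffices to show that under the hypothesis $P^{(n)}(e_x)\neq 0$ for all $x\in[n]^k$, we have $\seg(k,n)\cap\PP(\ker(P^{(n)}))=\varnothing$; equivalently, that $P^{(n)}(v_1\otimes\cdots\otimes v_k)\neq 0$ for every choice of nonzero $v_1,\ldots,v_k\in V$.

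First I would expand each $v_i=\sum_{j=1}^n a_{ji}e_j$ in the canonical basis, so that $v_1\otimes\cdots\otimes v_k=\sum_{x\in[n]^k}\bigl(\prod_{i=1}^k a_{x_i,i}\bigr)e_x$, and hence $P^{(n)}(v_1\otimes\cdots\otimes v_k)=\sum_{x\in[n]^k}\bigl(\prod_{i=1}^k a_{x_i,i}\bigr)P^{(n)}(e_x)$. The natural move is to consider the $S_n$-action $\sigma\mapsto\sigma^*$ on $V^{\otimes k}$ introduced before Definition \ref{chi matroidi}, together with the commuting $S_k$-action defining $P^{(n)}$: since $P^{(n)}$ commutes with the $\gl(V)$-action (cited from \cite[Theorem 8.2.8]{scarabotti}), in particular it commutes with the action of any invertible diagonal change of coordinates, and one can also build "generic" vectors by letting the $\gl(V)$-action move a simple tensor $e_{x_1}\otimes\cdots\otimes e_{x_k}$ around. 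The cleanest route, however, is a leading-term argument: order $[n]^k$ lexicographically, let $x^{\star}$ be the lex-largest index with $\prod_i a_{x_i,i}\neq 0$ — which exists because each column of coefficients is nonzero — and track the lex-largest basis vector $e_y$ appearing with nonzero coefficient across all the $P^{(n)}(e_x)$ that contribute. One needs that the "top" contribution cannot be cancelled; this is where the hypothesis $P^{(n)}(e_x)\neq 0$ for every $x$, rather than just for some $x$, is used.

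The main obstacle I anticipate is exactly ruling out cancellation: a priori the various $P^{(n)}(e_x)$, for different $x$ with nonzero coefficient $\prod_i a_{x_i,i}$, could conspire so that their weighted sum vanishes even though no individual term does. I would handle this by exploiting that $P=\sum_{g}b_g g\in\C[S_k]$ acts by permuting basis vectors (up to scalars): $P^{(n)}(e_x)=\sum_g b_g e_{g(x)}$, and distinct orbits $O_x$ under $S_k$ give disjoint supports. So it suffices to check the claim one $S_k$-orbit at a time, i.e. to show $\sum_{x\in O}c_x P^{(n)}(e_x)\neq 0$ whenever the $c_x$ are not all zero and $P^{(n)}(e_{x})\neq 0$ for $x\in O$; but the $c_x$ arising here are products $\prod_i a_{x_i,i}$, and picking $\sigma\in S_n$ (acting via $\sigma^*$) one can further normalize, reducing to the case where some $c_x$ can be made to dominate, or alternatively one invokes that the linear map $\C[S_k/G_x]\to V^{\otimes k}$, $g G_x\mapsto P^{(n)}(e_{g(x)})$ is injective onto a space of dimension $|G/G_x|$ exactly when $P^{(n)}(e_x)\neq 0$ (this is implicit in the analysis behind Theorems \ref{teorema span} and \ref{teorema base}). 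Combining this orbit-wise injectivity with the fact that the coefficient tuple $\bigl(\prod_i a_{x_i,i}\bigr)_{x\in[n]^k}$ is nonzero (each $v_i\neq 0$) yields $P^{(n)}(v_1\otimes\cdots\otimes v_k)\neq 0$, completing the verification that $\seg(k,n)\cap\PP(\ker(P^{(n)}))=\varnothing$ and hence, by Proposition \ref{corollario chiuso}, that $\gr_{P^{(n)}}(k,n)=\imm(\widehat{P}^{(n)})$.
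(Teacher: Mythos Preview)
Your reduction to Proposition~\ref{corollario chiuso} is exactly right, and singling out a distinguished index $x^{\star}$ in the support of $v_1\otimes\cdots\otimes v_k$ is the correct opening move. The trouble is what comes after. Lexicographic order is not adapted to the $S_k$-action: a permutation can send $x^{\star}$ to something lex-larger, and can send other support elements to the same basis vector, so a lex leading-term argument does not by itself rule out cancellation. Your fallback, the orbit-wise claim that $gG_x\mapsto P^{(n)}(e_{g(x)})$ is injective whenever $P^{(n)}(e_x)\neq 0$, is simply false: already for $k=2$, $P=e+s$ and $x=(1,2)$ one has $P^{(n)}(e_{(1,2)})=P^{(n)}(e_{(2,1)})\neq 0$. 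Theorems~\ref{teorema span} and~\ref{teorema base} concern the idempotents $P_\chi$, and even there they produce a basis only after selecting one representative per orbit; for a general $P\in\C[S_k]$ as in the hypothesis here they say nothing.

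The paper repairs this with one clean substitution: use the rank function $\rho$ on $[n]^k$ instead of lex. By the maximality property of $\seg(k,n)$ (Proposition~\ref{massimalità segre}), the support of $v:=v_1\otimes\cdots\otimes v_k$ has a unique componentwise maximum $x$, which is then also the unique support element of rank $\rho(x)$. Since the $S_k$-action on $[n]^k$ preserves $\rho$, every $P^{(n)}(e_y)$ with $y$ in the support and $y\neq x$ is supported entirely in ranks strictly below $\rho(x)$. Hence the rank-$\rho(x)$ component of $P^{(n)}(v)$ is a nonzero scalar times $P^{(n)}(e_x)\neq 0$, and no cancellation can occur. No orbit decomposition, no injectivity claim, no appeal to $\gl(V)$-equivariance is needed.
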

\begin{proof}
Let $v:=v_1\otimes \ldots \otimes v_k \in V^{\otimes k}$.
Hence $\max \left(\supp_{\mathbf{1}_G}[v]\right)=\{x\} \subseteq [n]^k$, by the maximality property of $\seg(k,n)$. Let $\rho$ be the rank function of $[n]^k$; then
$\rho(y)<\rho(x)=\rho(z)$, for all $y\in \supp_{\mathbf{1}_G}[v] \setminus \{x\}$ and for all $z\in \supp_{\mathbf{1}_G}\left[P^{(n)}(e_x)\right]$, because the action of $S_k$ on $[n]^k$ preserves its rank. This implies $P^{(n)}(v)\neq 0$; hence $\seg(k,n)\cap \PP\left( \ker\left(P^{(n)}\right) \right)=\varnothing $ and the result follows by Proposition \ref{corollario chiuso}.
\end{proof}

\begin{cor} \label{banale chiuso}
Let $G\subseteq S_k$ be a subgroup. Then
$\gr_{\mathbf{1}_G}(k,n)=\imm\left(\widehat{P}^{(n)}_{\mathbf{1}_G}\right)$.
\end{cor}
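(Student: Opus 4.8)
The plan is to obtain this as an immediate specialization of the preceding proposition, whose hypothesis requires only that $P^{(n)}(e_x)\neq 0$ for every $x\in[n]^k$. So the single point to verify is that $P:=P_{\mathbf{1}_G}=\frac{1}{|G|}\sum_{g\in G}g$ has this property.

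First I would compute $P^{(n)}_{\mathbf{1}_G}(e_x)=\frac{1}{|G|}\sum_{g\in G}e_{g(x)}$ and, grouping the sum over the left cosets of the isotropy group $G_x$, rewrite it as $\frac{|G_x|}{|G|}\sum_{y\in O_x}e_y$, which is nonzero because the $e_y$, $y\in O_x$, are pairwise distinct vectors of the canonical basis of $V^{\otimes k}$. Alternatively, this also follows from Theorem \ref{teorema span}: $P^{(n)}_{\mathbf{1}_G}(e_x)=0$ would force $\left\langle \mathrm{Res}^G_{G_x}(\mathbf{1}_G),\mathbf{1}_{G_x}\right\rangle=0$, but $\mathrm{Res}^G_{G_x}(\mathbf{1}_G)=\mathbf{1}_{G_x}$, so this scalar product equals $|G_x|\neq 0$.

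Then I would invoke the preceding proposition with $P:=P_{\mathbf{1}_G}$, obtaining $\gr_{P^{(n)}_{\mathbf{1}_G}}(k,n)=\imm\left(\widehat{P}^{(n)}_{\mathbf{1}_G}\right)$, which is precisely the assertion $\gr_{\mathbf{1}_G}(k,n)=\imm\left(\widehat{P}^{(n)}_{\mathbf{1}_G}\right)$ under the convention $\gr_{\chi}(k,n):=\gr_{P^{(n)}_{\chi}}(k,n)$.

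There is essentially no obstacle here: the corollary is a one-line consequence, with all of the substance lying in the preceding proposition --- which itself rests on the maximality property of $\seg(k,n)$ (Proposition \ref{massimalità segre}), the rank-preservation of the $S_k$-action on $[n]^k$, and Proposition \ref{corollario chiuso}.
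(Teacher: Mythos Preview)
Your proposal is correct and matches the paper's approach: the corollary is indeed an immediate specialization of the preceding proposition, with the only verification needed being $P^{(n)}_{\mathbf{1}_G}(e_x)\neq 0$ for all $x\in[n]^k$, which the paper obtains (just before stating the proposition) via the observation that $G_x\subseteq\ker(\mathbf{1}_G)=G$ is trivially satisfied, in line with your second argument through Theorem~\ref{teorema span}/Theorem~\ref{teorema base}. Your direct computation $P^{(n)}_{\mathbf{1}_G}(e_x)=\frac{|G_x|}{|G|}\sum_{y\in O_x}e_y$ is a perfectly good alternative and makes the nonvanishing entirely explicit.
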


Notice that, in the trivial character case, we have the following commutative diagram of functions, where $\sim_G$ is the equivalence relation on $\PP(V)^{\times k}$ defined by setting $([u_1],\ldots,[u_k]) \sim_G ([v_1],\ldots,[v_k])$ if and only if $([v_1],\ldots,[v_k])=([u_{g^{-1}(1)}],\ldots,[u_{g^{-1}(k)}])$, for some $g\in G$, and $\pi_G$ is the canonical projection on the quotient.

\[\begin{tikzcd}
\PP(V)^{\times k} \arrow{r}{\mathrm{seg}^{k,n}} \arrow[swap]{d}{\pi_G} &  \PP(V^{\otimes k}) \arrow{d}{P^{(n)}_{\mathbf{1}_{G}}} \\
\faktor{\PP(V)^{\times k}}{\sim_G} \arrow{r}{\mathrm{seg}^{k,n}_G} & \PP(V^{\otimes k}_{\mathbf{1}_{G}})
\end{tikzcd}
\]
The function $\mathrm{seg}^{k,n}_G$ is the unique one such that $\widehat{P}^{(n)}_{\mathbf{1}_{G}}\circ \mathrm{seg}^{k,n}= \mathrm{seg}^{k,n}_G \circ \pi_G$. It is an injective function by \cite[Theorem 6.60]{Merris}; moreover $\imm(\mathrm{seg}^{k,n}_G)=\gr_{\mathbf{1}_G}(k,n)$.

Our next aim is to prove that the variety $\gr_{\mathbf{1}_{G}}(k,n)$ has the maximality property.
We define an idempotent function $p_{\mathbf{1}_{G}} : [n]^k \rightarrow B_{\mathbf{1}_{G}}(k,n)$ by setting $p_{\mathbf{1}_{G}}(x)=\overline{x}$, for all $x\in [n]^k$.

\begin{prop} \label{ordine preservato}
 The function $p_{\mathbf{1}_{G}}: ([n]^k,\leqslant) \rightarrow B_{\mathbf{1}_{G}}(k,n)$ is order preserving.
\end{prop}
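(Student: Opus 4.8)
The plan is to reduce the statement to a single fact recalled in Section~\ref{sezione preliminari}: every $g\in G\subseteq S_k$ acts on $[n]^k$ as a (graded) automorphism of the poset $([n]^k,\leqslant)$, and in particular as an order-preserving bijection. Unwinding definitions, what has to be shown is: if $x\leqslant y$ componentwise in $[n]^k$, then $\overline{x}\preccurlyeq\overline{y}$ in $B_{\mathbf{1}_G}(k,n)$, i.e. there exists $g\in G$ with $\overline{x}\leqslant g(\overline{y})$.

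First I would fix $x\leqslant y$ and choose $h\in G$ with $h(x)=\overline{x}$; such an $h$ exists because $\overline{x}=\min(O_x,\preccurlyeq_{\lex})\in O_x=\{g(x):g\in G\}$. Since $h$ acts as an order automorphism of $[n]^k$, applying it to the inequality $x\leqslant y$ gives $\overline{x}=h(x)\leqslant h(y)$. Now $h(y)\in O_y$ and also $\overline{y}\in O_y$, so there is $g\in G$ with $h(y)=g(\overline{y})$. Combining, $\overline{x}\leqslant g(\overline{y})$, which is exactly the relation $\overline{x}\preccurlyeq\overline{y}$ by the definition of $\preccurlyeq$ on $B_{\mathbf{1}_G}(k,n)$. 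Hence $p_{\mathbf{1}_G}(x)=\overline{x}\preccurlyeq\overline{y}=p_{\mathbf{1}_G}(y)$, as required. Note that nothing in the argument uses that $\overline{x}$ is the lexicographically least element of its orbit — any $G$-orbit representative function would do — the point is purely that $G$ acts by poset automorphisms.

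There is essentially no serious obstacle here; the only things to be careful about are bookkeeping: applying the \emph{same} $h$ to both $x$ and $y$ so that monotonicity of the $G$-action can be invoked, and reading the (non-symmetric-looking) definition of the order on $B_{\mathbf{1}_G}(k,n)$ in the correct direction, namely $\overline{x}\preccurlyeq\overline{y}\iff\overline{x}\leqslant g(\overline{y})$ for some $g\in G$. Once these are set up, the monotonicity of the $S_k$-action together with the orbit identity $h(y)=g(\overline{y})$ finishes the proof in two lines.
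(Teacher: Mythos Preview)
Your argument is correct and is essentially identical to the paper's: both pick $h\in G$ with $h(x)=\overline{x}$, apply it to $x\leqslant y$ to get $\overline{x}\leqslant h(y)$, and then rewrite $h(y)$ as a $G$-translate of $\overline{y}$ (the paper writes this translate explicitly as $gh^{-1}$ after also fixing $h'$ with $h'(y)=\overline{y}$). Your closing observation that the lexicographic choice of representatives plays no role is accurate and worth noting.
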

\begin{proof}
  Let $x,y\in [n]^k$ be such that $x \leqslant y$, and $g,h \in G$ such that $p_{\mathbf{1}_{G}}(x)=g(x)$ and $p_{\mathbf{1}_{G}}(y)=h(y)$.
  Hence $p_{\mathbf{1}_{G}}(x) = g(x) \leqslant g(y)=gh^{-1}(p_{\mathbf{1}_{G}}(y))$, i.e. $p_{\mathbf{1}_{G}}(x) \preccurlyeq p_{\mathbf{1}_{G}}(y)$.
\end{proof}

\begin{cor} \label{corollario massimalita}
  The set $\gr_{\mathbf{1}_{G}}(k,n)$ has the maximality property. In particular, $\supp_{\mathbf{1}_G}[v]$ is a $\mathbf{1}_G$-matroid, for every $[v] \in \gr_{\mathbf{1}_{G}}(k,n)$.
\end{cor}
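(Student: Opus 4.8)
The plan is to build on three results already in the excerpt: Corollary \ref{banale chiuso}, which gives $\gr_{\mathbf{1}_{G}}(k,n)=\imm\left(\widehat{P}^{(n)}_{\mathbf{1}_G}\right)$, so every point is of the form $[v]$ with $v=P^{(n)}_{\mathbf{1}_G}(v_1\otimes\ldots\otimes v_k)$ for suitable $v_1,\ldots,v_k\in V\setminus\{0\}$; the maximality property of $\seg(k,n)$ (Proposition \ref{massimalità segre}); and the fact that $p_{\mathbf{1}_{G}}$ is order preserving (Proposition \ref{ordine preservato}).

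The first move is to reduce the $\mathbf{1}_G$-matroid condition to its instance $\sigma=e$, i.e.\ to the claim that $\supp_{\mathbf{1}_G}[v]$ has a unique maximum for every $[v]\in\gr_{\mathbf{1}_G}(k,n)$. Given $\sigma\in S_n$, let $\phi_\sigma\in\gl(V)$ be the permutation automorphism $e_i\mapsto e_{\sigma(i)}$, so that $\phi_\sigma^{\otimes k}$ realizes the $S_n$-action on $V^{\otimes k}$, namely $\phi_\sigma^{\otimes k}(e_x)=e_{\sigma^*(x)}$; since the $S_n$-action on $V^{\otimes k}$ commutes with the $S_k$-action, $\phi_\sigma^{\otimes k}$ commutes with $P^{(n)}_{\mathbf{1}_G}$. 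Hence $\phi_\sigma^{\otimes k}(v)=P^{(n)}_{\mathbf{1}_G}\left(\phi_\sigma(v_1)\otimes\ldots\otimes\phi_\sigma(v_k)\right)$, and as $\phi_\sigma$ is invertible this exhibits $[\phi_\sigma^{\otimes k}(v)]$ as another point of $\gr_{\mathbf{1}_G}(k,n)$. Moreover $\sigma^*$ commutes with the $G$-action on $[n]^k$, so it permutes the $G$-orbits; expanding $v$ in the basis of Theorem \ref{teorema base} and applying $\phi_\sigma^{\otimes k}$ term by term, the surviving basis vectors $P^{(n)}_{\mathbf{1}_G}(e_{\overline{\sigma^*(x)}})$ stay pairwise distinct, so no cancellation occurs and $\supp_{\mathbf{1}_G}[\phi_\sigma^{\otimes k}(v)]=\left\{\overline{\sigma^*(x)}:x\in\supp_{\mathbf{1}_G}[v]\right\}$. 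Thus the $\sigma$-condition for $[v]$ is exactly the $\sigma=e$-condition for the point $\phi_\sigma^{\otimes k}(v)$.

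It then remains to show that $\supp_{\mathbf{1}_G}[w]$ has a unique maximum for any $w=P^{(n)}_{\mathbf{1}_G}(v_1\otimes\ldots\otimes v_k)$. Writing $v_i=\sum_j b_{ij}e_j$, set $S:=\supp[v_1]\times\ldots\times\supp[v_k]\subseteq[n]^k$ and $c_x:=\prod_{i}b_{i,x_i}$, so $v_1\otimes\ldots\otimes v_k=\sum_{x\in S}c_x e_x$ and $w=\sum_{x\in S}c_x P^{(n)}_{\mathbf{1}_G}(e_x)$. Collecting this sum by $G$-orbits and invoking the linear independence in Theorem \ref{teorema base}, the coefficient of the basis vector $P^{(n)}_{\mathbf{1}_G}(e_{\overline{x}})$ (for $x\in S$) is $\sum_{y\in O_x\cap S}c_y$, so $\supp_{\mathbf{1}_G}[w]\subseteq\{\overline{x}:x\in S\}=p_{\mathbf{1}_G}(S)$. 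Let $x^*:=\max(S,\leqslant)=(\max\supp[v_1],\ldots,\max\supp[v_k])$, which is the unique maximum of $S$ by the maximality property of $\seg(k,n)$. The crucial observation is that $O_{x^*}\cap S=\{x^*\}$: any $y\in O_{x^*}\cap S$ is a rearrangement of $x^*$, hence shares with $x^*$ the value of the rank function of $([n]^k,\leqslant)$, while also $y\leqslant x^*$; since $[n]^k$ is graded this forces $y=x^*$. Consequently the coefficient of $P^{(n)}_{\mathbf{1}_G}(e_{\overline{x^*}})$ in $w$ is just $c_{x^*}=\prod_i b_{i,\max\supp[v_i]}\neq 0$, so $\overline{x^*}\in\supp_{\mathbf{1}_G}[w]$. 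Finally, since $x\leqslant x^*$ for every $x\in S$ and $p_{\mathbf{1}_G}$ is order preserving, $\overline{x}=p_{\mathbf{1}_G}(x)\preccurlyeq p_{\mathbf{1}_G}(x^*)=\overline{x^*}$ for all $x\in S$; hence $\overline{x^*}$ dominates all of $\supp_{\mathbf{1}_G}[w]\subseteq p_{\mathbf{1}_G}(S)$ and lies in it, so it is the unique maximum.

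I expect the delicate point to be precisely the control of cancellation when $w$ is written in the basis of Theorem \ref{teorema base}: a priori the coefficient of the ``leading'' basis vector $P^{(n)}_{\mathbf{1}_G}(e_{\overline{x^*}})$ could vanish by cancellation among the $c_y$, $y\in O_{x^*}\cap S$, which would wreck the argument. The rank computation giving $O_{x^*}\cap S=\{x^*\}$ is exactly what removes this danger; the parallel point in the reduction step is that the $G$-equivariant bijection $\sigma^*$ permutes orbits, keeping the basis vectors occurring in $\phi_\sigma^{\otimes k}(v)$ distinct.
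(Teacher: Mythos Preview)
Your proof is correct and follows essentially the same strategy as the paper's: use Corollary \ref{banale chiuso}, the maximality of $\seg(k,n)$ (Proposition \ref{massimalità segre}), and the order-preservation of $p_{\mathbf{1}_G}$ (Proposition \ref{ordine preservato}) to exhibit $\overline{x^*}=p_{\mathbf{1}_G}(m)$ as the maximum of the support. You make explicit two points the paper leaves implicit---the reduction to $\sigma=e$ via the commuting $S_n$-action on $\gr_{\mathbf{1}_G}(k,n)$, and the non-cancellation of the leading coefficient via the rank argument $O_{x^*}\cap S=\{x^*\}$---which is careful and entirely correct.
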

\begin{proof}
Let $\left[P^{(n)}_{\mathbf{1}_G}(v_1 \otimes .... \otimes v_k)\right] \in \gr_{1_G}(k,n)$. By Proposition \ref{massimalità segre}, $\supp_{\mathbf{1}_G}[v_1 \otimes .... \otimes v_k]$ has a unique maximum $m \in [n]^k$. Hence, by Proposition \ref{ordine preservato}, $p_{\mathbf{1}_{G}}(m)$ is the maximum of
$\supp_{\mathbf{1}_G}\left[P^{(n)}_{\mathbf{1}_G}(v_1 \otimes .... \otimes v_k)\right]$.
\end{proof}

Let $\rho : [n]^k \rightarrow \N \cup \{0\}$ be the rank function of $[n]^k$, i.e.
$\rho(x)=\sum_{i=1}^k(x_k-1)$, for all $x\in [n]^k$. The restriction of $\rho$ provides the rank function of the poset $B_{\mathbf{1}_{G}}(k,n)$, as the next result shows.

\begin{prop} \label{prop graduato}
  The poset $B_{\mathbf{1}_{G}}(k,n)$ is graded with rank function $\rho$.
\end{prop}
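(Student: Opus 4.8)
The statement to prove is that the poset $B_{\mathbf{1}_{G}}(k,n)$ is graded with rank function $\rho$ (the restriction of the rank function of $[n]^k$). A graded poset, in the sense relevant here, is one in which all maximal chains between two comparable elements have the same length, and $\rho$ witnesses this in the sense that $\rho(y) - \rho(x)$ equals the length of any such maximal chain. The plan is to reduce everything to covering relations: it suffices to show that whenever $x \var y$ is a covering relation in $B_{\mathbf{1}_{G}}(k,n)$, we have $\rho(y) = \rho(x) + 1$; together with the fact (already established in the excerpt) that $\rho$ is order preserving and strictly increasing along $\preccurlyeq$ (which follows from the claim in the proof that $x \leqslant g(x)$ forces $g \in G_x$, so $x \prec y$ implies $\rho(x) < \rho(y)$), this immediately gives gradedness with rank function $\rho$, up to the harmless normalization of subtracting $\rho(\overline{(1,\dots,1)}) = 0$.

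The core step is therefore: \emph{if $x \var y$ in $B_{\mathbf{1}_{G}}(k,n)$ then $\rho(y) = \rho(x)+1$.} Here I would use the surjective order-preserving map $p_{\mathbf{1}_{G}} : ([n]^k,\leqslant) \to B_{\mathbf{1}_{G}}(k,n)$ from Proposition \ref{ordine preservato}. Given $x \var y$, pick $g \in G$ with $p_{\mathbf{1}_{G}}(g(y)) = y$ — in fact take the representative so that $g(y) = y$ since $y \in B_{\mathbf{1}_{G}}(k,n)$ — and note $x \leqslant g'(y) $ for some $g' \in G$; after relabelling we may as well take $x' := $ a preimage with $x \leqslant x' \leqslant y$ componentwise in $[n]^k$, where $p_{\mathbf{1}_{G}}(x') = x$. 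Wait — more carefully: by definition of $\preccurlyeq$, $x \preccurlyeq y$ means $x \leqslant h(y)$ for some $h\in G$; set $z := h(y) \in [n]^k$, so $x \leqslant z$ and $p_{\mathbf{1}_{G}}(z) = \overline{z} = \overline{y} = y$ (since $z$ and $y$ are in the same $G$-orbit) and $\overline{x} = x$. In the poset $[n]^k$, which is graded with rank $\rho$, the interval $[x,z]$ is a product of chains, and $\rho(z) - \rho(x) = \rho(y) - \rho(x) \ge 1$ since $x \ne y$. Now I must show this difference is exactly $1$: if $\rho(z) - \rho(x) \ge 2$, pick an element $w$ with $x < w < z$ in $[n]^k$ (such $w$ exists since $[n]^k$ is graded and the interval has length $\ge 2$) and apply $p_{\mathbf{1}_{G}}$: then $p_{\mathbf{1}_{G}}(x) = x \preccurlyeq p_{\mathbf{1}_{G}}(w) \preccurlyeq p_{\mathbf{1}_{G}}(z) = y$, and because $\rho$ is strictly monotone along $\prec$ and $\rho$ is $p_{\mathbf{1}_{G}}$-invariant (the $G$-action preserves $\rho$), we get $\rho(x) < \rho(p_{\mathbf{1}_{G}}(w)) < \rho(y)$, so $x \prec p_{\mathbf{1}_{G}}(w) \prec y$, contradicting $x \var y$. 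Hence $\rho(y) = \rho(x)+1$.

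The main obstacle — and the one point that needs genuine care — is the verification that $p_{\mathbf{1}_{G}}(w)$ is strictly between $x$ and $y$ rather than equal to one of them; this is where the claim "$a \leqslant g(a) \Rightarrow g \in G_a$", i.e. rank-preservation of the $G$-action, does the real work, since it guarantees $\rho$ is constant on $G$-orbits and strictly increasing along $\prec$, so that the $\rho$-values $\rho(x) < \rho(w) < \rho(z)$ in $[n]^k$ descend to strict inequalities $\rho(x) < \rho(p_{\mathbf{1}_{G}}(w)) < \rho(y)$ in $B_{\mathbf{1}_{G}}(k,n)$, forcing proper comparabilities. Once that is in hand, gradedness follows formally: all maximal chains in any interval $[x,y]$ of $B_{\mathbf{1}_{G}}(k,n)$ have length $\rho(y)-\rho(x)$ by a telescoping argument over covering relations, and $\rho$ (after the normalization $\rho(\overline{(1,\dots,1)})=0$, which is automatic) is the rank function. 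I would close by remarking that the minimum of $B_{\mathbf{1}_{G}}(k,n)$ is $(1,\dots,1)$ with $\rho = 0$, so no shift is needed.
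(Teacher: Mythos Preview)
Your proposal is correct and follows essentially the same route as the paper: both arguments reduce to showing that a covering relation $x \var y$ in $B_{\mathbf{1}_{G}}(k,n)$ forces $\rho(y)=\rho(x)+1$, by lifting to $[n]^k$ via a representative $z=h(y)$ with $x\leqslant z$, choosing an intermediate $w$ when $\rho(z)-\rho(x)\geq 2$, and applying the order-preserving projection $p_{\mathbf{1}_G}$ together with $G$-invariance of $\rho$ to produce a strict intermediate element, contradicting the cover. The only cosmetic difference is that the paper's write-up is terser and also records explicitly that $(1,\ldots,1)$ and $(n,\ldots,n)$ are the minimum and maximum.
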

\begin{proof}
  The poset $B_{\mathbf{1}_{G}}(k,n)$ has maximum and minimum,  $(n,n,\ldots,n)$ and $(1,1,\ldots,1)$, respectively.
  Let $x \vartriangleleft y$ in $B_{\mathbf{1}_{G}}(k,n)$. Then $x \leqslant g(y)$ for some $g\in G$.
  Let $z \in [n]^k$ be such that $x<z<y$. By Proposition \ref{ordine preservato} we have that $x \preccurlyeq p_{\mathbf{1}_{G}}(z) \preccurlyeq y$. Since $\rho(z)=\rho(h(z))$, for all $h\in G$, we have $x \prec p_{\mathbf{1}_{G}}(z) \prec y$, a contradiction.
  From this fact we conclude that $x \vartriangleleft y$ in $B_{\mathbf{1}_{G}}(k,n)$ implies $\rho(y)=\rho(x)+1$.
\end{proof}

\begin{oss} \label{rango preservato}
Notice that Proposition \ref{prop graduato} follows directly by \cite[Lemma 7]{sagan} since $B_{\mathbf{1}_{G}}(k,n)$ is a homogeneous quotient of the poset $B_{\mathbf{1}_{\{e\}}}(k,n)$.
  Moreover, by Propositions \ref{ordine preservato} and \ref{prop graduato} it follows that $p_{1_G}$ is a morphism of graded posets.
\end{oss}

In the following example we depict the Hasse diagram of one of these posets.

\begin{ex} \label{esempio ciclico}
  Let $k=n=3$, $A_3$ be the alternating subgroup of $S_3$.
  The poset $B_{\mathbf{1}_{A_3}}(3,3)$ has the following Hasse diagram:

  \begin{center}
 \begin{tikzpicture}[scale=1.3]
  \node (a1) at (2,6) {$(3,3,3)$};
  \node (b1) at (2,5) {$(2,3,3)$};
  \node (c1) at (1,4) {$(1,3,3)$};
  \node (c2) at (3,4) {$(2,2,3)$};
  \node (d1) at (0,3) {$(1,2,3)$};
  \node (d2) at (2,3) {$(1,3,2)$};
  \node (d3) at (4,3) {$(2,2,2)$};
  \node (e1) at (1,2) {$(1,2,2)$};
  \node (e2) at (3,2) {$(1,1,3)$};
  \node (f1) at (2,1) {$(1,1,2)$};
  \node (g1) at (2,0) {$(1,1,1)$};
  \draw (a1) -- (b1) (b1) -- (c1) (b1) -- (c2) (c1) -- (d1)(c1)--(d2) (c2)--(d1) (c2)--(d2) (c2)--(d3) (d1)--(e1)
  (d1)--(e2) (d2)--(e1) (d2)--(e2) (d3)--(e1) (e1)--(f1) (e2)--(f1) (f1)--(g1);
\end{tikzpicture}
  \end{center}
\end{ex}

By the weighted P\'olya enumeration theorem, one can deduce the rank-generating function of $B_{\mathbf{1}_{G}}(k,n)$.

\begin{prop} \label{polia}
The rank-generating function of the poset $B_{\mathbf{1}_G}(k,n)$ is
$$\sum \limits_{x \in B_{\mathbf{1}_{G}}(k,n)}q^{\rho(x)}=
\frac{1}{|G|}\sum \limits_{g\in G} \prod\limits_{i=1}^k\left([n]_{q^i}\right)^{c_i(g)},$$
where $[n]_q:=\sum \limits_{i=0}^{n-1}q^i$ is the $q$-analog of $n$ and $c_i(g)$ is the number of cycles of length $i \in [k]$ of
the permutation $g\in S_k$.
\end{prop}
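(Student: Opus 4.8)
The plan is to identify the rank-generating function of $B_{\mathbf{1}_G}(k,n)$ with a weighted orbit count and then apply a weighted version of Burnside/Pólya. The key observation is that, by Corollary \ref{banale chiuso} and Theorem \ref{teorema base} (or simply by definition of $\overline{x}$), the set $B_{\mathbf{1}_G}(k,n)$ is a complete set of orbit representatives for the action of $G$ on $[n]^k$, so $\sum_{x\in B_{\mathbf{1}_G}(k,n)}q^{\rho(x)}$ is exactly $\sum_{\mathcal{O}}q^{\rho(\mathcal{O})}$, summed over $G$-orbits $\mathcal{O}\subseteq [n]^k$, where $\rho$ is constant on each orbit (because $S_k$ permutes coordinates and hence preserves $\rho(x)=\sum_i(x_i-1)$, as recorded in the proof of Proposition \ref{prop graduato}). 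Thus the target sum is a ``weight enumerator of orbits'' with weight $q^{\rho(x)}=\prod_{i=1}^k t_{x_i}$ evaluated at $t_j=q^{j-1}$.

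First I would set up the weighted Burnside lemma: for a finite group $G$ acting on a finite set $S$ equipped with a $G$-invariant weight $w:S\to R$ valued in a commutative ring,
$$\sum_{\mathcal{O}\in S/G} w(\mathcal{O}) = \frac{1}{|G|}\sum_{g\in G}\sum_{s\in S^g} w(s),$$
where $S^g$ is the fixed-point set of $g$. This is the standard orbit-counting argument weighted by $w$; it holds because $w$ being $G$-invariant makes $\sum_{s\in \mathcal{O}}w(s) = |\mathcal{O}|\,w(\mathcal{O})$ and the classical double-counting of pairs $(g,s)$ with $g(s)=s$ goes through verbatim after multiplying by $w(s)$. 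Here $S=[n]^k$, the weight is $w(x)=q^{\rho(x)}=\prod_{i=1}^k q^{x_i-1}$, which is $G$-invariant since $G\subseteq S_k$ only permutes the entries of $x$.

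Next I would compute $\sum_{x\in ([n]^k)^g} w(x)$ for a fixed $g\in G\subseteq S_k$. An element $x\in[n]^k$ is fixed by $g$ precisely when $x$ is constant on each cycle of $g$ acting on $[k]$. If $g$ has a cycle of length $i$ supported on coordinates $j_1,\dots,j_i$, then on that cycle $x$ contributes a common value $a\in[n]$ with weight $(q^{a-1})^{i}=q^{i(a-1)}$, and summing over $a\in[n]$ gives $\sum_{a=1}^{n}q^{i(a-1)}=[n]_{q^i}$. Since distinct cycles of $g$ are supported on disjoint sets of coordinates and the weight is multiplicative over coordinates, the fixed-point weight sum factors as $\prod_{i=1}^k \left([n]_{q^i}\right)^{c_i(g)}$, where $c_i(g)$ is the number of $i$-cycles of $g$. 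Substituting into the weighted Burnside formula yields exactly the claimed identity.

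I do not anticipate a genuine obstacle: the only points needing care are (i) making explicit that $B_{\mathbf{1}_G}(k,n)$ is a transversal of the $G$-orbits on $[n]^k$ and that $\rho$ descends to orbits, both of which are immediate from the definition of $\overline{x}$ and the coordinate-permutation action, and (ii) stating the weighted Burnside lemma in enough generality (over $\mathbb{Z}[q]$, with a $G$-invariant weight) — this is routine but worth spelling out so the reader need not reconstruct it. If one prefers, step (ii)–(iii) can instead be phrased as a direct instance of the weighted Pólya enumeration theorem applied to the cycle index $Z_G(p_1,\dots,p_k)$ with the substitution $p_i\mapsto [n]_{q^i}$, which is presumably the phrasing suggested by the sentence preceding the statement.
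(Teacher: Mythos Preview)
Your argument is correct and coincides with the paper's approach: both identify $B_{\mathbf{1}_G}(k,n)$ with a set of $G$-orbit representatives on $[n]^k$, observe that $\rho$ is $G$-invariant, and apply the weighted P\'olya/Burnside enumeration with weight $q^{\rho(x)}$ so that each $i$-cycle contributes a factor $[n]_{q^i}$. The paper's proof is simply a terser statement of the same idea (weights $w(i)=i-1$ on colours $[n]$), while you have spelled out the fixed-point computation explicitly.
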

\begin{proof}
The result follows by using the weighted P\'olya enumeration theorem with $[n]$ as set of colours and
weights $w(i)=i-1$, for all $i\in [n]$. Then the generating function of the colours is $[n]_q$ and $\rho(x)=\sum_{i=1}^k w(x_i)$, for all $x\in B_{\mathbf{1}_G}(k,n)$. Therefore the generating function of the number of colored arrangements by weight is the rank-generating function of $B_{\mathbf{1}_G}(k,n)$.
\end{proof}
\begin{cor} \label{specchio} Let $G \subseteq S_k$ be a group; then
the poset $B_{\mathbf{1}_G}(k,n)$ is rank-symmetric.
\end{cor}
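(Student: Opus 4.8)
The plan is to show directly that the rank-generating function computed in Proposition \ref{polia} is a palindromic polynomial whose degree equals the rank of the maximum of $B_{\mathbf{1}_G}(k,n)$. By Proposition \ref{prop graduato} the poset is graded with rank function $\rho$, and its maximum $(n,n,\ldots,n)$ has rank $\rho((n,\ldots,n))=k(n-1)$; so rank-symmetry is equivalent to the polynomial identity $R(q)=q^{k(n-1)}R(q^{-1})$, where $R(q):=\sum_{x\in B_{\mathbf{1}_G}(k,n)}q^{\rho(x)}$.

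First I would record the elementary fact that each $q$-integer $[n]_q=1+q+\cdots+q^{n-1}$ is palindromic of degree $n-1$, i.e. $[n]_q=q^{n-1}[n]_{q^{-1}}$, and consequently $[n]_{q^i}=q^{i(n-1)}[n]_{q^{-i}}$ is palindromic of degree $i(n-1)$ for every $i\in[k]$. Since a product of palindromic polynomials is again palindromic, with degree equal to the sum of the degrees, for each $g\in G$ the polynomial $\prod_{i=1}^k\left([n]_{q^i}\right)^{c_i(g)}$ appearing in Proposition \ref{polia} is palindromic of degree $(n-1)\sum_{i=1}^k i\,c_i(g)$.

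The key step is then the observation that $\sum_{i=1}^k i\,c_i(g)=k$ for every $g\in S_k$, because the cycles of $g$ partition the set $[k]$. Hence every summand in the formula of Proposition \ref{polia} is palindromic of the same degree $k(n-1)$. Finally, a nonnegative linear combination of palindromic polynomials that all have the same degree is again palindromic of that degree (the reversal operation is linear), so $R(q)=\frac{1}{|G|}\sum_{g\in G}\prod_{i=1}^k([n]_{q^i})^{c_i(g)}$ is palindromic of degree $k(n-1)$, i.e. $R(q)=q^{k(n-1)}R(q^{-1})$, which is exactly rank-symmetry of $B_{\mathbf{1}_G}(k,n)$.

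There is no serious obstacle; the only point requiring minor care is to check that the common degree of the summands is exactly $k(n-1)$ — equivalently, that $\prod_{i=1}^k([n]_{q^i})^{c_i(g)}$ genuinely attains degree $k(n-1)$ rather than something smaller, which holds because each factor $[n]_{q^i}$ has nonzero leading coefficient — so that the polynomial degree matches $\rho$ of the maximum and "palindromic polynomial" translates faithfully into "rank-symmetric graded poset".
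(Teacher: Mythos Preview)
Your proof is correct and follows essentially the same approach as the paper: the paper's proof simply notes that $\deg(B(q))=k(n-1)$ and then asserts the ``easy equality'' $q^{k(n-1)}B(q^{-1})=B(q)$, while you have spelled out in detail why this equality holds (palindromicity of each $[n]_{q^i}$ and the identity $\sum_i i\,c_i(g)=k$). No gap or difference of method.
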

\begin{proof}
Let $B(q):=
\frac{1}{|G|}\sum \limits_{g\in G} \prod\limits_{i=1}^k\left([n]_{q^i}\right)^{c_i(g)}$. Since $\deg(B(q))=k(n-1)$, the result can be deduced by the easy equality $q^{k(n-1)}B(q^{-1})=B(q)$.
\end{proof}

For the rest of this section, we focus on the case $G=S_k$. As for the Bruhat order on Grassmannian permutations, the poset $B_{\mathbf{1}_{S_k}}(k,n)$ is an induced subposet of $[n]^k$
(see \cite[Proposition 4.9]{BoloSenti} for a similar statement in the Grassmannian context). It is not difficult to prove that it is a distributive lattice. Then, a direct consequence of \cite[Theorem 3.7 and Proposition 4.2]{biorner} is that the intervals of $B_{\mathbf{1}_{S_k}}(k,n)$ are shellable.

\begin{oss}
  The poset $B_{\mathbf{1}_G}(k,n)$ could be not a lattice; see Example \ref{esempio ciclico}.
\end{oss}

In the last result of this section we provide an easy way to produce $\mathbf{1}_{S_k}$-matroids.

\begin{prop} \label{prop rappresentabili}
Let $[x,y]$ be an interval of $B_{\mathbf{1}_{S_k}}(k,n)$; then $[x,y]$ is a $\mathbf{1}_{S_k}$-matroid representable over $\C$.
\end{prop}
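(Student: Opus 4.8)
The plan is to exhibit an explicit point $[v]\in\gr_{\mathbf{1}_{S_k}}(k,n)$ whose support is exactly the interval $[x,y]$; since $\gr_{\mathbf{1}_{S_k}}(k,n)$ has the maximality property (Corollary \ref{corollario massimalita}), such a support is automatically a $\mathbf{1}_{S_k}$-matroid, and being representable is the definition of lying in $\{\supp_{\mathbf{1}_{S_k}}[v]:[v]\in\imm(\widehat{P}^{(n)}_{\mathbf{1}_{S_k}})\}$ (recalling $\gr_{\mathbf{1}_{S_k}}(k,n)=\imm(\widehat P^{(n)}_{\mathbf{1}_{S_k}})$ by Corollary \ref{banale chiuso}). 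So the whole content is to produce the right decomposable tensor. Because $B_{\mathbf{1}_{S_k}}(k,n)$ consists of weakly increasing words $x=(x_1\le\cdots\le x_k)$ and $y=(y_1\le\cdots\le y_k)$ with $x\le y$ componentwise, I would try $v:=v_1\otimes\cdots\otimes v_k$ where $v_i:=\sum_{j=x_i}^{y_i}e_j$. By Proposition \ref{massimalità segre} and the formula for $\max(\supp)$ of a pure tensor in its proof, $\max(\supp_{\mathbf{1}_{\{e\}}}[v])=(y_1,\ldots,y_k)=y$; and the minimum of $\supp_{\mathbf{1}_{\{e\}}}[v]$ is clearly $(x_1,\ldots,x_k)=x$.

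The key step is then to identify $\supp_{\mathbf{1}_{S_k}}[P^{(n)}_{\mathbf{1}_{S_k}}(v)]$ with the image $p_{\mathbf{1}_{S_k}}(\supp_{\mathbf{1}_{\{e\}}}[v])$, and to show this image equals the interval $[x,y]$ in $B_{\mathbf{1}_{S_k}}(k,n)$. For the first point: expanding $v=\sum_{z}a_z e_z$ with $a_z\in\{0,1\}$ and $z$ ranging over the box $\prod_i[x_i,y_i]\cap[n]^k$, applying $P^{(n)}_{\mathbf{1}_{S_k}}=\frac1{k!}\sum_{\sigma}\sigma$ and collecting terms shows $P^{(n)}_{\mathbf{1}_{S_k}}(v)=\sum_{w}b_w P^{(n)}_{\mathbf{1}_{S_k}}(e_w)$ where $w$ runs over $B_{\mathbf{1}_{S_k}}(k,n)$ and $b_w$ is (a positive multiple of) the number of $z$ in the box with $\overline z=w$; since all coefficients are nonnegative, no cancellation occurs and $\supp=\{\overline z:z\in\text{box}\}$. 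For the second point, one inclusion is Proposition \ref{ordine preservato} applied to the box (it maps into $[x,y]$ since $p_{\mathbf{1}_{S_k}}$ is order preserving, $x$ is the minimum of the box and is already sorted, $y$ is the maximum and already sorted); for the reverse inclusion, given any weakly increasing $w$ with $x\le w\le y$, one checks $x\le w\le y$ componentwise already (both $x,w$ sorted) so $w$ itself lies in the box and $\overline w=w$, hence $w\in p_{\mathbf{1}_{S_k}}(\text{box})$.

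I expect the main obstacle to be the bookkeeping in the reverse inclusion: one must be careful that the componentwise order on sorted words coincides with the order $\preccurlyeq$ on $B_{\mathbf{1}_{S_k}}(k,n)$ — i.e. that if $w$ is sorted and $w\preccurlyeq y$ (meaning $w\le \sigma(y)$ for some $\sigma$) then in fact $w\le y$ componentwise. This is the standard ``sorting only helps'' lemma for the dominance-type order on multisets, and should follow from the fact, used already in the proof of Proposition on $\preccurlyeq$, that the rank function is $S_k$-invariant together with a short exchange argument; alternatively it is immediate because for sorted words $w\le\sigma(y)$ for some $\sigma$ forces $w_i\le (\sigma y)_i$ and sorting $\sigma(y)$ can only decrease each of the first entries and increase later ones in a way compatible with $w$ being sorted. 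Once this is in hand, combining the two inclusions gives $\supp_{\mathbf{1}_{S_k}}[P^{(n)}_{\mathbf{1}_{S_k}}(v)]=[x,y]$, which is a $\mathbf{1}_{S_k}$-matroid by Corollary \ref{corollario massimalita} and representable by construction, completing the proof.
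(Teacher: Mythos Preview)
Your proposal is correct and follows essentially the same approach as the paper: construct the decomposable tensor $v=\bigotimes_i\sum_{j=x_i}^{y_i}e_j$, identify $\supp_{\mathbf{1}_{S_k}}[P^{(n)}_{\mathbf{1}_{S_k}}(v)]$ with the sorted elements of the box $\prod_i[x_i,y_i]$, show this equals the interval $[x,y]$, and invoke Corollary~\ref{corollario massimalita}. The only differences are organizational: where the paper proves the inclusion $\supp\subseteq[x,y]$ by a direct swap argument (if $z$ lies in the box and $z_i>z_j$ with $i<j$, then swapping these entries stays in the box, so $\overline z$ is in the box), you obtain it more slickly from Proposition~\ref{ordine preservato}; and where the paper treats $[x,y]\subseteq\supp$ as trivial (relying on the already-stated fact that $B_{\mathbf{1}_{S_k}}(k,n)$ is an induced subposet of $[n]^k$), you correctly spell out that this is exactly the ``sorting only helps'' lemma for weakly increasing words.
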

\begin{proof} We first prove that $[x,y]$ is representable over $\C$.
This follows by noting that, if $$[w]:=\left[\left(\sum\limits_{i_1=x_1}^{y_1}e_{i_1}\right)\otimes \ldots \otimes \left(\sum\limits_{i_k=x_k}^{y_k}e_{i_k}\right)\right] \in \seg(k,n),$$
then $\supp_{\mathbf{1}_{S_k}}\left[P^{(n)}_{\mathbf{1}_{S_k}}(w)\right]=[x,y]$. The inclusion $[x,y] \subseteq \supp_{\mathbf{1}_{S_k}}\left[P^{(n)}_{\mathbf{1}_{S_k}}(w)\right]$ is trivial. The other inclusion follows since $x_1\leqslant x_2 \leqslant \ldots \leqslant x_k$ and
 $y_1\leqslant y_2 \leqslant \ldots \leqslant y_k$, and then the following fact holds: if $1 \leq i<j \leq k$ and $a>b$ such that $x_i\leqslant a \leqslant y_i$, $x_j\leqslant b \leqslant y_j$, then $x_i\leqslant b \leqslant y_i$ and $x_j\leqslant a \leqslant y_j$.
 By Corollary \ref{corollario massimalita} we have that $[x,y]$ is
 a $\mathbf{1}_{S_k}$-matroid.
\end{proof}

\begin{oss}
Notice that a Bruhat interval $[x,y]$ of Grassmannian permutations is a matroid (see \cite[Section 2.3]{BoloSenti} and references therein). In general, any Bruhat interval in a parabolic quotient of a finite Coxeter group is a Coxeter matroid (see \cite{capelli}).
\end{oss}

The result of Proposition \ref{prop rappresentabili} is not true for the trivial character of an arbitrary group. In fact, consider  the poset of Example \ref{esempio ciclico}, and its interval
$[(1,1,3),(1,3,3)]$. Hence, if $\sigma=132 \in S_3$, we obtain
$$\left\{\sigma^*(x): x \in [(1,1,3),(1,3,3)]\right\}=\left\{(1,1,2),(1,2,3),(1,3,2),(1,2,2)\right\}.$$ This set has two maxima,
$(1,2,3)$ and $(1,3,2)$, so $[(1,1,3),(1,3,3)]$ is not a $\chi$-matroid; in particular, it is not representable over $\C$, because $\gr_{\mathbf{1}_{A_3}(3,3)}$ has the maximality property by Corollary \ref{corollario massimalita}.

\section{The geometry of $\gr_{\mathbf{1}_G}(k,n)$} \label{sezione ciao}

In this section we use ideas from \cite{BoloSenti} and the combinatorial tools of Section \ref{sezione triviale} to realize an incidence stratification of the variety $\gr_{\mathbf{1}_{G}}(k,n)$. This stratification provides a set of generators for the Chow $\Q$-vector space of $\gr_{\mathbf{1}_{G}}(k,n)$, as we prove in Theorem \ref{teorema ciao}.
We consider the incidence group over $\C$ of the poset $B_{\mathbf{1}_{G}}(k,n)$   acting on the projective space $\PP\left(V^{\otimes k}_{\mathbf{1}_G}\right)$.
Then, by \cite[Theorem 5.1]{BoloSenti} the orbits of this action are in bijection with the order ideals of $B_{\mathbf{1}_{G}}(k,n)$.
Given an order ideal $I\in \mathcal{J}\left(B_{\mathbf{1}_{G}}(k,n)\right)$ let $C_I$ be the corresponding orbit and define
$$ C^{\mathbf{1}_{G}}_I:= C_I \cap \gr_{\mathbf{1}_G}(k,n).$$
\begin{prop} \label{teorema ideali ordine principali}
  We have that $C^{\mathbf{1}_{G}}_I \neq \varnothing$ if and only if $I$ is a principal order ideal. Hence as posets $$\left\{I\in \mathcal{J}\left(B_{\mathbf{1}_{G}}(k,n)\right):C^{\mathbf{1}_{G}}_I \neq \varnothing \right\} \simeq B_{\mathbf{1}_{G}}(k,n).$$
\end{prop}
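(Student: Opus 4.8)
The plan is to translate the geometric condition $C^{\mathbf{1}_{G}}_I \neq \varnothing$ into a combinatorial statement about supports and then apply the maximality property. Write $P := B_{\mathbf{1}_{G}}(k,n)$ and identify $V^{\otimes k}_{\mathbf{1}_G}$ with $\C^{|P|}$ through the basis $\{P^{(n)}_{\mathbf{1}_G}(e_z):z\in P\}$ of Theorem \ref{teorema base}, so that $\supp_{\mathbf{1}_G}$ coincides with the ordinary support used in Section \ref{sezione preliminari} to describe the orbits $C_I$. The first step is the elementary observation that, for $[v]\in\PP(V^{\otimes k}_{\mathbf{1}_G})$, one has $[v]\in C_I$ if and only if $I$ is the order ideal of $P$ generated by $\supp_{\mathbf{1}_G}[v]$. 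Indeed, unwinding $C_I=\PP(V_I)\setminus\bigcup_{i\in\max(I)}\PP(V_{I\setminus\{i\}})$ gives the two conditions $\supp_{\mathbf{1}_G}[v]\subseteq I$ and $\max(I)\subseteq\supp_{\mathbf{1}_G}[v]$; since in a finite poset every element of $I$ lies below some element of $\max(I)$, these two conditions together say exactly that $I$ is the ideal generated by $\supp_{\mathbf{1}_G}[v]$.

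For the forward implication, suppose $C^{\mathbf{1}_G}_I\neq\varnothing$ and pick $[v]\in C_I\cap\gr_{\mathbf{1}_G}(k,n)$. By Corollary \ref{corollario massimalita} the induced subposet $\supp_{\mathbf{1}_G}[v]$ is a $\mathbf{1}_G$-matroid; taking $\sigma$ the identity of $S_n$ in Definition \ref{chi matroidi} (so that $\overline{\sigma^*(x)}=x$ for $x\in P$), it has a maximum $x\in P$. A maximum dominates every element, so $\supp_{\mathbf{1}_G}[v]\subseteq x^{\downarrow}$ and $x\in\supp_{\mathbf{1}_G}[v]$, whence the ideal it generates is $x^{\downarrow}$; by the first step $I=x^{\downarrow}$ is principal. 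For the converse, let $I=x^{\downarrow}$ with $x\in P$, put $v_i:=e_1+e_2+\dots+e_{x_i}\in V\setminus\{0\}$ for $i\in[k]$, and $v:=P^{(n)}_{\mathbf{1}_G}(v_1\otimes\dots\otimes v_k)$. Then $\supp_{\mathbf{1}_{\{e\}}}[v_1\otimes\dots\otimes v_k]=\{y\in[n]^k:y\leqslant x\}$, whose unique maximum in $B_{\mathbf{1}_{\{e\}}}(k,n)=[n]^k$ is $x$; running the argument that proves Corollary \ref{corollario massimalita} (Proposition \ref{massimalità segre} followed by the order-preserving idempotent $p_{\mathbf{1}_G}$ of Proposition \ref{ordine preservato}) shows that $\supp_{\mathbf{1}_G}[v]$ has maximum $p_{\mathbf{1}_G}(x)=\overline{x}=x$. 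In particular $v\neq 0$, so $[v]\in\imm(\widehat{P}^{(n)}_{\mathbf{1}_G})=\gr_{\mathbf{1}_G}(k,n)$ by Corollary \ref{banale chiuso}, while $\supp_{\mathbf{1}_G}[v]\subseteq x^{\downarrow}$ together with $x\in\supp_{\mathbf{1}_G}[v]$ forces the generated ideal to be $x^{\downarrow}=I$, hence $[v]\in C^{\mathbf{1}_G}_I$.

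The displayed poset isomorphism then follows from the standard fact that $x\mapsto x^{\downarrow}$ is an order isomorphism from $P$ onto the set of principal order ideals of $P$ ordered by inclusion (it is a bijection, and $x\preccurlyeq y\iff x^{\downarrow}\subseteq y^{\downarrow}$), combined with the equality $\{I\in\mathcal{J}(P):C^{\mathbf{1}_G}_I\neq\varnothing\}=\{x^{\downarrow}:x\in P\}$ just established. I expect the only delicate point to be the converse direction, namely making sure that $\max\supp_{\mathbf{1}_G}[v]$ is genuinely the single element $x$, rather than $x$ being merely one of several maximal elements. This is precisely what the proof of Corollary \ref{corollario massimalita} yields, since $p_{\mathbf{1}_G}$ is a morphism of graded posets and the Segre support $\{y\leqslant x\}$ has an actual maximum $x$; alternatively one can check it directly, noting that the coefficient of $P^{(n)}_{\mathbf{1}_G}(e_z)$ in $v$ equals $|O_z\cap\{y\in[n]^k:y\leqslant x\}|$, which is nonzero exactly when $z\preccurlyeq x$ (moving the permutation across the componentwise inequality turns "$z\leqslant g(x)$ for some $g\in G$" into "$w\leqslant x$ for some $w\in O_z$"). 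Either way the remaining verifications are routine.
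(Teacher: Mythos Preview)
Your proof is correct and follows the same strategy as the paper: the ``nonempty implies principal'' direction rests on the maximality property (Corollary~\ref{corollario massimalita}), exactly as in the paper, and the converse exhibits an explicit point of $\gr_{\mathbf{1}_G}(k,n)$ lying in $C_I$. The only difference is in the choice of witness for the converse: the paper simply takes $[P^{(n)}_{\mathbf{1}_G}(e_x)]$, whose support in the basis of Theorem~\ref{teorema base} is the singleton $\{x\}$ and which visibly lies in $\imm(\widehat{P}^{(n)}_{\mathbf{1}_G})$ since $e_x=e_{x_1}\otimes\cdots\otimes e_{x_k}$ is a rank-one tensor. This sidesteps your coefficient computation and the appeal to Proposition~\ref{ordine preservato}. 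Your construction $v_i=e_1+\cdots+e_{x_i}$ is correct and in fact proves more---that $\supp_{\mathbf{1}_G}[v]$ equals the whole of $x^\downarrow$ rather than merely generating it---but that extra information is not needed for the proposition.
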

\begin{proof}
  Let $I$ be principal with $m:=\max(I)$. Then $\left[P^{(n)}_{\mathbf{1}_G}(e_m)\right] \in C_I \cap \gr_{\mathbf{1}_G}(k,n)$. On the other hand, let $[v] \in C_I \cap \gr_{\mathbf{1}_G}(k,n)$ for some $v \in V^{\otimes k}_{\mathbf{1}_G}$. By
  Corollary \ref{corollario massimalita}, we have that $\max(I)=\max(\supp_{\mathbf{1}_{G}}[v])=\{m\}$
  for some $m \in B_{\mathbf{1}_{G}}(k,n)$. The rest of the statement is an immediate consequence.
\end{proof}

For $x\in B_{\mathbf{1}_{G}}(k,n)$, we  set $C_x := C^{\mathbf{1}_{G}}_{x^{\downarrow}}$. By Proposition \ref{teorema ideali ordine principali}, it follows that
$$\gr_{\mathbf{1}_G}(k,n) = \biguplus_{x \in B_{\mathbf{1}_{G}}(k,n)}C_x.$$

Let $x\in [n]^k$ and $A\in \mat_{n,k}(\C)$ be the matrix of parameters (see Theorem \ref{equazioni}). We let $A^x\in \mat_{n,k}(\C)$ be the matrix whose entries are
$A^x_{i,j}=A_{i,j}$ if $i \leqslant x_j$, and $A^x_{i,j}=0$ otherwise.
We now provide a system of parametric equations for the algebraic set $\overline{C}_{x^\downarrow} \cap \gr_{\mathbf{1}_G}(k,n)$.

\begin{thm} \label{parametriche sciubert}
Let $x\in B_{\mathbf{1}_G}(k,n)$. Then $ \overline{C}_{x^\downarrow} \cap \gr_{\mathbf{1}_G}(k,n)$ is described by the parametric equations
 $$\left\{z_y =|G_y|^{-1}(\mathbf{1}_G)_{y,(1,\ldots,k)}(A^{x}) : y \in x^\downarrow\right\} \cup \left\{z_y=0: y\not \in x^{\downarrow}\right\}.$$
\end{thm}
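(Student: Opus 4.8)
The strategy is to identify $\overline{C}_{x^\downarrow}\cap\gr_{\mathbf{1}_G}(k,n)$ with the image of a suitable ``truncated'' version of the parametrization from Theorem~\ref{equazioni}, and to exploit that both $\gr_{\mathbf{1}_G}(k,n)$ and the closure $\overline{C}_{x^\downarrow}=\PP(V_{x^\downarrow})$ are already Zariski-closed (Corollary~\ref{banale chiuso} and the description of orbit closures recalled in Section~\ref{sezione preliminari}). First I would recall that $\overline{C}_{x^\downarrow}=\PP\bigl(\spn_\C\{P^{(n)}_{\mathbf{1}_G}(e_y):y\in x^\downarrow\}\bigr)$, so a point $[v]\in\PP(V^{\otimes k}_{\mathbf{1}_G})$ lies in $\overline{C}_{x^\downarrow}$ exactly when $\supp_{\mathbf{1}_G}[v]\subseteq x^\downarrow$; in coordinates, this is precisely the condition $z_y=0$ for $y\notin x^\downarrow$. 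Thus the second family of equations cuts out $\overline{C}_{x^\downarrow}$, and what remains is to show that the intersection with $\gr_{\mathbf{1}_G}(k,n)$ is parametrized by the first family.

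\textbf{Key steps.} The computation mirrors the proof of Theorem~\ref{equazioni}. Write $v_j=\sum_{i=1}^n A^x_{i,j}e_i=\sum_{i\leqslant x_j}A_{i,j}e_i$ for $j\in[k]$, i.e.\ we take the generic parametrization but force the $j$-th column to be supported on $\{1,\dots,x_j\}$. Applying $\widehat P^{(n)}_{\mathbf{1}_G}$ to $[v_1\otimes\cdots\otimes v_k]$ and using Proposition~\ref{proposizione immananti} with $M=A^x\in\Hom(W,V)$ gives, up to the nonzero scalar $\chi(e)/|G|=1/|G|$,
\[
P^{(n)}_{\mathbf{1}_G}(v_1\otimes\cdots\otimes v_k)=\frac{1}{|G|}\sum_{y\in[n]^k}(\mathbf{1}_G)_{y,(1,\dots,k)}(A^x)\,\tilde e_y .
\]
Now observe that $(\mathbf{1}_G)_{y,(1,\dots,k)}(A^x)=\sum_{g\in G}\prod_{i}A^x_{g(y)_i,i}$ vanishes unless every component of some $G$-translate of $y$ respects the column bounds, which happens precisely when $y\preccurlyeq x$ in $B_{\mathbf{1}_G}(k,n)$; hence only the coordinates $z_y$ with $y\in x^\downarrow$ are (possibly) nonzero, and for those the value is $(\mathbf{1}_G)_{y,(1,\dots,k)}(A^x)$. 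After passing to the canonical basis $\{P^{(n)}_{\mathbf{1}_G}(e_{\overline y}):\overline y\in B_{\mathbf{1}_G}(k,n)\}$ of $V^{\otimes k}_{\mathbf{1}_G}$ (Theorem~\ref{teorema base}) and using the proportionality $P^{(n)}_{\mathbf{1}_G}(e_y)=P^{(n)}_{\mathbf{1}_G}(e_{\overline y})$ valid for the trivial character, one collects the contributions of each orbit and the factor $|G_y|^{-1}$ appears as the reciprocal of the size of the orbit stabilizer (each orbit $O_y$ contributes $|G|/|G_y|$ equal terms), yielding exactly $z_y=|G_y|^{-1}(\mathbf{1}_G)_{y,(1,\dots,k)}(A^x)$ for $y\in x^\downarrow$.

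\textbf{Closing the argument.} It then remains to check that the image of this restricted parametrization is all of $\overline{C}_{x^\downarrow}\cap\gr_{\mathbf{1}_G}(k,n)$, not merely a dense subset. One inclusion is the computation above (the image lands in $\gr_{\mathbf{1}_G}(k,n)$ since $[v_1\otimes\cdots\otimes v_k]\in\seg(k,n)$, and in $\overline{C}_{x^\downarrow}$ by the support condition). For the reverse, take $[v]\in\overline{C}_{x^\downarrow}\cap\gr_{\mathbf{1}_G}(k,n)$; by Corollary~\ref{banale chiuso} we may write $v=P^{(n)}_{\mathbf{1}_G}(v_1\otimes\cdots\otimes v_k)$ for some $v_j\in V\setminus\{0\}$, and by Corollary~\ref{corollario massimalita} the set $\supp_{\mathbf{1}_G}[v]$ has a unique maximum $m=p_{\mathbf{1}_G}(\max\supp_{\mathbf{1}_G}[v_1\otimes\cdots\otimes v_k])\preccurlyeq x$. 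Writing $w_j:=\max(\supp_{\mathbf{1}_{\{e\}}}[v_j])$, we have $(w_1,\dots,w_k)\in O_m$, so after replacing $(v_1,\dots,v_k)$ by a $G$-translate (which does not change $[v]$) we may assume each $v_j$ is supported on $\{1,\dots,(\overline m)_j\}\subseteq\{1,\dots,x_j\}$; hence $v_j$ is obtained by specializing the parameters $A^x$, and $[v]$ is in the image. The main subtlety to get right is precisely this last normalization step: ensuring that one can simultaneously bring \emph{all} the tensor factors within the column bounds $x_j$ by a single group element $g\in G$, which is exactly what the maximality property (Corollary~\ref{corollario massimalita}) together with Proposition~\ref{ordine preservato} guarantees, since the rank-preserving action forces $\overline{(w_1,\dots,w_k)}=m$ and $m\preccurlyeq x$ means componentwise $m\leqslant g(x)$ for suitable $g$, equivalently $g^{-1}(m)\leqslant x$.
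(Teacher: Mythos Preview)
Your argument is correct and follows essentially the same route as the paper: restrict the generic parametrization of Theorem~\ref{equazioni} to the truncated matrix $A^x$, identify the coordinates $z_y$ by collecting orbit contributions (which is exactly the paper's final chain of equalities), and then show surjectivity onto the intersection by normalizing the tensor factors via a single $G$-translate. The paper organizes this as a reduction to the $G=\{e\}$ case and isolates the combinatorial identity $p_{\mathbf{1}_G}(x^{\downarrow})=(p_{\mathbf{1}_G}(x))^{\downarrow}$ as a separate claim, but the content is the same.

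One inaccuracy to fix: in your reverse-inclusion step you write ``each $v_j$ is supported on $\{1,\dots,(\overline m)_j\}\subseteq\{1,\dots,x_j\}$''. The containment $(\overline m)_j\leqslant x_j$ does \emph{not} follow from $m\preccurlyeq x$; e.g.\ in $B_{\mathbf{1}_{A_3}}(3,3)$ one has $(1,1,3)\preccurlyeq(1,3,2)$ but $(1,1,3)\not\leqslant(1,3,2)$. What you actually obtain (and what your final paragraph correctly states) is that $m\leqslant g(x)$, hence $g^{-1}(m)\leqslant x$, so after the $G$-translate the $j$-th factor is supported on $\{1,\dots,(g^{-1}(m))_j\}\subseteq\{1,\dots,x_j\}$. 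Replace $\overline m$ by $g^{-1}(m)$ in that sentence and the proof is clean.
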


\begin{proof}
First consider the case $G=\{e\}$, the trivial group. Let $x\in [n]^k$ and
$[u]\in\overline{C}_{x^\downarrow} \cap \seg(k,n)$; hence, by Proposition \ref{teorema ideali ordine principali}, $z:=\max (\supp_{\mathbf{1}_{G}}[u])\leqslant x$ and then
$v=\sum_{y\leqslant x} (A^x)^{\otimes k}_{y,(1,\ldots,k)}e_y$ is a representative of $[u]$, for the choice $a_{ij}=0$ whenever $i>z_i$,
for all $i\in [n]$ and for all $j\in [k]$.

Let $G\neq \{e\}$, $x\in B_{\mathbf{1}_G}(k,n)\subseteq [n]^k$ and $[u]\in \overline{C}_{x^{\downarrow G}} \cap \gr_{\mathbf{1}_G}(k,n)$, $x^{\downarrow G} \in \mathcal{J}(B_{\mathbf{1}_G}(k,n))$. Then, by Proposition \ref{ordine preservato}, a representative of $[u]$ is
$w=\frac{1}{|G|}\sum\limits_{g\in G}gv$, for some $v \in \overline{C}_{x^\downarrow} \cap \seg(k,n)$,
where  $x^\downarrow \in \mathcal{J}([n]^k)$ and $\overline{x}=x$.

We claim that, if $x\in [n]^k$, then $p_{\mathbf{1}_G}(x^\downarrow)=\left(p_{\mathbf{1}_G}(x)\right)^\downarrow$. First we prove that $p_{\mathbf{1}_G}(x^\downarrow) \subseteq \left(p_{\mathbf{1}_G}(x)\right)^\downarrow$. Let $y \in p_{\mathbf{1}_G}(x^\downarrow)$; then $y=p_{\mathbf{1}_G}(z)$ for some $z\leqslant x$. By Proposition \ref{ordine preservato} the function $p_{\mathbf{1}_G}$ is order preserving, hence
$y=p_{\mathbf{1}_G}(z)\preccurlyeq p_{\mathbf{1}_G}(x)$, i.e. $y\in \left(p_{\mathbf{1}_G}(x)\right)^\downarrow$.
We prove now that
$\left(p_{\mathbf{1}_G}(x)\right)^\downarrow \subseteq  p_{\mathbf{1}_G}(x^\downarrow)$. Let
    $y\in \left(p_{\mathbf{1}_G}(x)\right)^\downarrow $; then $y \preccurlyeq p_{\mathbf{1}_G}(x)$, i.e. $y  \leq g(x)$, for some $g\in G$.
    Therefore $z:=g^{-1}(y) \leqslant x$.
    Hence $z \in x^\downarrow$ and the result follows because $y=p_{\mathbf{1}_G}(z)$.

Our claim implies that, as sets, $$\mathcal{J}([n]^k) \ni x^{\downarrow} \simeq \biguplus\limits_{z \preccurlyeq x} \bigcup \limits_{\{g\in G:g(z)\leqslant x\}}\left\{(z,g(z))\right\}.$$
Therefore, by the previous case and our claim,
\begin{eqnarray*} w&=&\frac{1}{|G|}\sum\limits_{h\in G}h\sum\limits_{y\leqslant x} (A^x)^{\otimes k}_{y,(1,\ldots,k)}e_y \\
&=&  \frac{1}{|G|}\sum\limits_{h\in G}h \sum \limits_{y\preccurlyeq x}\sum \limits_{\substack{u\in O_y\\ u\leq x}}(A^x)^{\otimes k}_{u,(1,\ldots,k)}e_{u}\\
&=&  \frac{1}{|G|}\sum \limits_{y\preccurlyeq x}\left(\sum \limits_{u\in O_y}(A^x)^{\otimes k}_{u,(1,\ldots,k)}\right)\sum_{h\in G}e_{h(y)}\\
&=&  \sum \limits_{y\preccurlyeq x}|G_y|^{-1}(\mathbf{1}_G)_{y,(1,\ldots,k)}(A^x)P^{(n)}_{\mathbf{1}_G}(e_y).
\end{eqnarray*}
\end{proof}

\begin{cor} \label{corollario Schubert irriducibili}
Let $x\in B_{\mathbf{1}_{G}}(k,n)$. Then the projective algebraic set $\overline{C}_{x^\downarrow} \cap \gr_{\mathbf{1}_G}(k,n)$ is irreducible. In particular $$\overline{C}_x=\overline{C}_{x^\downarrow} \cap \gr_{\mathbf{1}_G}(k,n),$$ where $\overline{C}_x$ and $\overline{C}_{x^\downarrow}$ are the Zariski closures in $\gr_{\mathbf{1}_G}(k,n)$ and $\PP\left(V^{\otimes k}_{\mathbf{1}_G}\right)$, respectively.
\end{cor}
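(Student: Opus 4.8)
The plan is to deduce the Corollary directly from Theorem \ref{parametriche sciubert}, which has just been proved. First I would observe that the parametric equations given in Theorem \ref{parametriche sciubert} exhibit $\overline{C}_{x^\downarrow} \cap \gr_{\mathbf{1}_G}(k,n)$ as the Zariski closure of the image of a polynomial map from the parameter space $\mat_{n,k}(\C)$ (with no column identically zero), exactly as in Definition \ref{definizione} and the discussion following Theorem \ref{equazioni}: the coordinates $z_y$ for $y\in x^\downarrow$ are polynomials in the entries of $A^x$, and the remaining coordinates vanish. Since the parameter space is irreducible, the standard argument already invoked for Proposition \ref{teorema irriducibile} (via \cite[Proposition 4.5.5]{cox}) shows that the Zariski closure of this image is irreducible, hence a projective variety. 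This handles the first assertion.

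For the identification $\overline{C}_x = \overline{C}_{x^\downarrow} \cap \gr_{\mathbf{1}_G}(k,n)$, recall from Section \ref{sezione preliminari} that $\overline{C}_{x^\downarrow} = \PP\left(V_{x^\downarrow}\right)$, the span of the basis vectors $P^{(n)}_{\mathbf{1}_G}(e_y)$ with $y \in x^\downarrow$, so the closed set $\overline{C}_{x^\downarrow}\cap \gr_{\mathbf{1}_G}(k,n)$ contains $C_x = C^{\mathbf{1}_G}_{x^\downarrow}$ and hence contains its Zariski closure $\overline{C}_x$ (closure taken in $\gr_{\mathbf{1}_G}(k,n)$). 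For the reverse inclusion, I would use the decomposition $\gr_{\mathbf{1}_G}(k,n) = \biguplus_{z\in B_{\mathbf{1}_G}(k,n)} C_z$ together with Proposition \ref{teorema ideali ordine principali}: a point of $\overline{C}_{x^\downarrow}\cap \gr_{\mathbf{1}_G}(k,n)$ lies in some stratum $C_z$, and since it also lies in $\PP\left(V_{x^\downarrow}\right)$ its support is contained in $x^\downarrow$, forcing $z = \max(\supp_{\mathbf{1}_G}[v]) \preccurlyeq x$ by Corollary \ref{corollario massimalita}; thus $\overline{C}_{x^\downarrow}\cap \gr_{\mathbf{1}_G}(k,n) = \biguplus_{z\preccurlyeq x} C_z$. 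It then remains to check that this union equals $\overline{C}_x$, i.e. that every $C_z$ with $z\preccurlyeq x$ is contained in the closure of $C_x$; this follows because the parametric description in Theorem \ref{parametriche sciubert} shows $\overline{C}_{x^\downarrow}\cap \gr_{\mathbf{1}_G}(k,n)$ is the closure of a single irreducible family whose generic member has support exactly $x^\downarrow$ (take all parameters generic), so that family's closure is an irreducible set containing $C_x$ and contained in $\overline{C}_x$, while conversely it contains the whole intersection.

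The one point requiring a little care — and the step I expect to be the mildest obstacle — is making precise that the generic fibre of the parametrization in Theorem \ref{parametriche sciubert} has support equal to all of $x^\downarrow$, so that $C_x$ is dense in the irreducible set $\overline{C}_{x^\downarrow}\cap\gr_{\mathbf{1}_G}(k,n)$ rather than merely contained in it. This can be seen by taking the matrix $A^x$ with all admissible entries nonzero and generic, or more concretely by the explicit witness already constructed in the proof of Proposition \ref{prop rappresentabili} (in the $G=S_k$ case) and its obvious analogue here: the vector $P^{(n)}_{\mathbf{1}_G}(e_m)$ with $m=\max(x^\downarrow)=x$ lies in $C_x$, and perturbing it within $\PP(V_{x^\downarrow})$ keeps us in $\gr_{\mathbf{1}_G}(k,n)$ by the parametric equations, showing $C_x$ is open dense. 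Once density is established, irreducibility of the intersection gives $\overline{C}_{x^\downarrow}\cap\gr_{\mathbf{1}_G}(k,n) = \overline{C_x} = \overline{C}_x$, completing the proof.
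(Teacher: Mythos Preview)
Your argument for irreducibility is exactly the paper's: Theorem \ref{parametriche sciubert} gives a polynomial parametrization of $\overline{C}_{x^\downarrow}\cap\gr_{\mathbf{1}_G}(k,n)$, and one invokes the same standard fact used in Proposition \ref{teorema irriducibile}.

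For the equality $\overline{C}_x=\overline{C}_{x^\downarrow}\cap\gr_{\mathbf{1}_G}(k,n)$, your proof is correct but considerably longer than the paper's. The paper observes in one line that $C_x=C_{x^\downarrow}\cap\gr_{\mathbf{1}_G}(k,n)$ is a nonempty open subset of the irreducible closed set $\overline{C}_{x^\downarrow}\cap\gr_{\mathbf{1}_G}(k,n)$ (open because $C_{x^\downarrow}$ is open in $\overline{C}_{x^\downarrow}=\PP(V_{x^\downarrow})$, nonempty by Proposition \ref{teorema ideali ordine principali}); density follows immediately. Your stratification $\biguplus_{z\preccurlyeq x}C_z$ and the discussion of the generic member of the parametric family recover this density statement, but by a detour: once you know the intersection is irreducible and $C_x$ is nonempty open in it, nothing further is needed. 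You do in fact state ``showing $C_x$ is open dense'' near the end, so the essential idea is present, just surrounded by extra material.

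One small slip to flag: the phrase ``perturbing it within $\PP(V_{x^\downarrow})$ keeps us in $\gr_{\mathbf{1}_G}(k,n)$'' is not right as stated, since most points of $\PP(V_{x^\downarrow})$ do not lie in $\gr_{\mathbf{1}_G}(k,n)$. What you mean is that perturbing the \emph{parameters} $A^x$ keeps us inside the intersection, and a generic choice lands in $C_x$; but again, the openness-in-an-irreducible-set observation makes this analysis unnecessary.
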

\begin{proof}
As in Proposition \ref{teorema irriducibile}, Theorem \ref{parametriche sciubert} implies that $\overline{C}_x$ is irreducible. The equality can be deduced by noting that $C_{x^\downarrow} \cap \gr_{\mathbf{1}_G}(k,n)$ is an open set of the projective variety
$\overline{C}_{x^\downarrow} \cap \gr_{\mathbf{1}_G}(k,n)$.
\end{proof}

We have proved in the previous section that the poset $B_{\mathbf{1}_G}(k,n)$ is graded; in the last result of this section we interpret geometrically its rank function.
Moreover we provide a set of generators for the Chow vector space $A_*\left(\gr_{\mathbf{1}_G}(k,n);\Q\right)$ of $\gr_{\mathbf{1}_G}(k,n)$ over $\Q$. Recall from Proposition \ref{polia} that $c_i(g)$ is the number of cycles of length $i \in [k]$ of
the permutation $g\in S_k$, and denote by $\mathrm{HP}_G$ the Hilbert-Poincar\'e polynomial of $A_*\left(\gr_{\mathbf{1}_G}(k,n);\Q\right)$.

\begin{thm} \label{teorema ciao}
    Let $G\subseteq S_k$ be a group and $x\in B_{\mathbf{1}_G}(k,n)$. Then:
    \begin{enumerate}
        \item $\dim(\overline{C}_x)=\rho(x)$; in particular $\dim(\gr_{\mathbf{1}_G}(k,n))=k(n-1)$.
        \item $\left\{[\overline{C}_x]:x \in B_{\mathbf{1}_{G}}(k,n)\right\}$ is a set of generators of the Chow vector space over
$\Q$ of $\gr_{\mathbf{1}_G}(k,n)$; in particular
$$
\mathrm{HP}_G(q)\leq\frac{1}{|G|}\sum \limits_{g\in G} \prod\limits_{i=1}^k\left([n]_{q^i}\right)^{c_i(g)},$$
    \end{enumerate} where the partial order on polynomials is taken coefficient-wise.
\end{thm}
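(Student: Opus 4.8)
The plan is to prove (1) first, since (2) uses the dimension formula. The geometric fact underlying everything is that, by Corollary~\ref{banale chiuso} together with the commutative square defining $\mathrm{seg}^{k,n}_G$, the map $\widehat{P}^{(n)}_{\mathbf{1}_G}$ restricts to a \emph{finite surjective} morphism $\pi\colon\seg(k,n)\to\gr_{\mathbf{1}_G}(k,n)$: it is proper since $\seg(k,n)$ is projective, surjective by Corollary~\ref{banale chiuso}, and every fibre is a single $\sim_G$-class of $k$-tuples by the injectivity of $\mathrm{seg}^{k,n}_G$ (\cite[Theorem~6.60]{Merris}), hence has at most $|G|$ points. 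Moreover, by Proposition~\ref{ordine preservato} (applied together with the density of $C_{\overline m}$ in $\overline{C}_{\overline m}$ and properness of $\pi$), $\pi$ maps the stratum closure $\overline{C}_m\subseteq\seg(k,n)$ onto $\overline{C}_{\overline m}$, for every $m\in[n]^k$.

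For (1): by Corollary~\ref{corollario Schubert irriducibili} (with $G=\{e\}$), $\overline{C}_m$ is the product of projective subspaces $\PP(V_{[m_1]})\times\dots\times\PP(V_{[m_k]})$, of dimension $\sum_{j=1}^k(m_j-1)=\rho(m)$. Since $\pi$ is finite and $\overline{C}_m\to\overline{C}_{\overline m}$ is surjective between irreducibles, $\dim\overline{C}_{\overline m}=\dim\overline{C}_m=\rho(m)=\rho(\overline m)$; taking $m=x$ (its own orbit minimum) gives $\dim\overline{C}_x=\rho(x)$ for all $x\in B_{\mathbf{1}_G}(k,n)$. For $x=(n,\dots,n)=\max B_{\mathbf{1}_G}(k,n)$ one has $\overline{C}_x=\gr_{\mathbf{1}_G}(k,n)$, so $\dim\gr_{\mathbf{1}_G}(k,n)=k(n-1)$. (Alternatively, (1) can be read directly off the parametric equations of Theorem~\ref{parametriche sciubert}, which realize $\overline{C}_x$ as the closure of the image of a dominant, generically finite map from an open subset of $\prod_j\PP^{x_j-1}$ modulo $G$.)

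For (2): I would use that $\pi_*\colon A_*(\seg(k,n);\Q)\to A_*(\gr_{\mathbf{1}_G}(k,n);\Q)$ is surjective, which holds because $\pi$ is finite surjective: any subvariety $W$ downstairs is $\pi(V)$ for some component $V$ of $\pi^{-1}(W)$, and then $\pi_*[V]=d\,[W]$ with $d\in\Z_{>0}$ (same dimension), so $[W]=d^{-1}\pi_*[V]$. Now $\seg(k,n)\cong(\PP^{n-1})^k$ has free Chow group with basis the classes of the products of linear subspaces $\overline{C}_m$, $m\in[n]^k$ (the standard basis of $A_*$ of a product of projective spaces, cf.\ \cite[Chapter~1]{Fulton}), and $\pi_*[\overline{C}_m]=d_m\,[\overline{C}_{\overline m}]$ with $d_m\in\Z_{>0}$ by (1). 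Since $m\mapsto\overline m$ sends $[n]^k$ onto $B_{\mathbf{1}_G}(k,n)$ (each element of the latter being the lex-minimum of its orbit), the family $\{[\overline{C}_x]:x\in B_{\mathbf{1}_G}(k,n)\}$ generates $A_*(\gr_{\mathbf{1}_G}(k,n);\Q)$. The same conclusion can be reached along the incidence stratification: fix a linear extension $x_1,\dots,x_N$ of $B_{\mathbf{1}_G}(k,n)$, set $Z_j:=\bigcup_{i\leq j}\overline{C}_{x_i}$ (a chain of closed subvarieties with $Z_j\setminus Z_{j-1}=C_{x_j}$ by Proposition~\ref{teorema ideali ordine principali}), and use the right-exact localization sequences $A_*(Z_{j-1};\Q)\to A_*(Z_j;\Q)\to A_*(C_{x_j};\Q)\to0$ together with the fact that $C_{x_j}$ is a finite quotient of $\C^{\rho(x_j)}$, so that $A_*(C_{x_j};\Q)$ is a quotient of $A_*(\C^{\rho(x_j)};\Q)$ and hence generated by $[C_{x_j}]$; induction on $j$ then gives the claim, $[\overline{C}_{x_j}]$ being a lift of $[C_{x_j}]$.

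Finally, the inequality for $\mathrm{HP}_G$ follows since $\overline{C}_x$ is irreducible of dimension $\rho(x)$, so its class lies in $A_{\rho(x)}$ and $A_d(\gr_{\mathbf{1}_G}(k,n);\Q)$ is spanned by $\{[\overline{C}_x]:\rho(x)=d\}$; thus $\dim_\Q A_d\leq\#\{x\in B_{\mathbf{1}_G}(k,n):\rho(x)=d\}$, which by Proposition~\ref{polia} equals the coefficient of $q^d$ in $\tfrac{1}{|G|}\sum_{g\in G}\prod_{i=1}^k([n]_{q^i})^{c_i(g)}$, and summing over $d$ yields the coefficient-wise bound. The crux — and the step I expect to need the most care — is the passage through the finite quotient $\pi$: showing $\pi$ is finite (resting on injectivity of $\mathrm{seg}^{k,n}_G$) and that $\pi_*$ is surjective over $\Q$ with $\pi_*[\overline{C}_m]$ a positive multiple of $[\overline{C}_{\overline m}]$ (equivalently, in the stratified version, the vanishing of $A_k(C_x;\Q)$ for $k<\rho(x)$); the remainder is bookkeeping with the incidence stratification and Proposition~\ref{polia}.
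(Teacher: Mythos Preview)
Your main argument is essentially the same as the paper's: establish that $\widehat{P}^{(n)}_{\mathbf{1}_G}\colon\seg(k,n)\to\gr_{\mathbf{1}_G}(k,n)$ is finite surjective (via \cite[Theorem~6.60]{Merris}), deduce the dimension formula from the case $G=\{e\}$ where $\overline{C}_m\cong\prod_j\PP^{m_j-1}$, and obtain the generating set by surjectivity of the push-forward on Chow groups with $\Q$-coefficients. The paper uses the identity $p_{\mathbf{1}_G}(x^\downarrow)=(p_{\mathbf{1}_G}(x))^\downarrow$ (from the proof of Theorem~\ref{parametriche sciubert}) together with irreducibility to prove $\widehat{P}^{(n)}_{\mathbf{1}_G}(\overline{C}_m\cap\seg(k,n))=\overline{C}_{\overline m}$, which is slightly more explicit than your density-plus-properness sketch; your alternative route via the localization sequence along the stratification is an extra argument the paper does not give, but the primary line of proof coincides.
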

\begin{proof}
Let $G=\{e\}$. It is easy to verify that $\overline{C}_x \simeq \prod_{i=1}^k \PP(\mathbb{C}^{x_i})$ as projective varieties.
Hence $\dim(\overline{C}_x)=\sum\limits_{i\in [k]}(x_i-1)=\rho(x)$.
In this case the second statement is clear. In fact $\seg(k,n) \simeq \prod_{i=1}^k\PP(\C^n)$ as algebraic varieties, and a basis for the Chow vector space of the second is $\left\{\prod_{i=1}^k [\overline{C}_{x_i}]: x \in [n]^k\right\}$; this basis corresponds to $\left\{[\overline{C}_x]:x \in [n]^k\right\}$ by using the exterior product isomorphism (see \cite[Example 8.3.7]{Fulton}).

Assume now $\{e\} \subsetneq G$.
    The morphism $\widehat{P}^{(n)}_{\mathbf{1}_G}: \seg(k,n) \rightarrow \gr_{\mathbf{1}_G}(k,n)$ has finite fibers by \cite[Theorem 6.60]{Merris}. Since it is a surjective morphism between projective varieties, for every subvariety $X\subseteq \seg(k,n)$ of dimension $d$ we have that $\widehat{P}^{(n)}_{\mathbf{1}_G}(X)$ is a $d$-dimensional subvariety of $\gr_{\mathbf{1}_G}(k,n)$.
    As we stated in the proof of Theorem \ref{parametriche sciubert},  $p_{\mathbf{1}_G}(x^\downarrow)=\left(p_{\mathbf{1}_G}(x)\right)^\downarrow$; hence
  $\widehat{P}^{(n)}_{\mathbf{1}_G}(\overline{C}_{x^{\downarrow}})=\overline{C}_{x^{\downarrow G}}\subseteq \PP(V^{\otimes k}_{\mathbf{1}_G})$. It follows that
    $$\widehat{P}^{(n)}_{\mathbf{1}_G}\left(\overline{C}_{x^{\downarrow}} \cap \seg(k,n)\right) \subseteq \widehat{P}^{(n)}_{\mathbf{1}_G}(\overline{C}_{x^{\downarrow }}) \cap \gr_{\mathbf{1}_G}(k,n)=\overline{C}_{x^{\downarrow G}} \cap \gr_{\mathbf{1}_G}(k,n).$$ Therefore, by Corollary \ref{corollario Schubert irriducibili} and the fact that $\widehat{P}^{(n)}_{\mathbf{1}_G}$ preserves the dimensions, we obtain $\widehat{P}^{(n)}_{\mathbf{1}_G}\left(\overline{C}_{x^{\downarrow}} \cap \seg(k,n)\right)=\overline{C}_{x^{\downarrow G}} \cap \gr_{\mathbf{1}_G}(k,n)$. This proves the first statement.

    We prove now the second statement. The morphism $\widehat{P}^{(n)}_{\mathbf{1}_G}$ preserves the dimensions and then the push-forward $$(\widehat{P}^{(n)}_{\mathbf{1}_G})_* : A_*(\seg(k,n);\Q) \rightarrow A_*(\gr_{\mathbf{1}_G}(k,n);\Q)$$ is surjective; so the result follows by the $G=\{e\}$ case and the fact that $\widehat{P}^{(n)}_{\mathbf{1}_G}(\overline{C}_{x^\downarrow} \cap \seg(k,n))=\overline{C}_{x^{\downarrow G}} \cap\gr_{\mathbf{1}_G}(k,n)$. The inequality is a consequence of Proposition \ref{polia}.
\end{proof}
For the Segre variety $\seg(k,n)$, the strata of the incidence stratification defined above are smooth. Hence, the incidence stratification of $\seg(k,n)$ we have realized in this section, is a {\em Seshadri stratification}, in the sense of \cite[Definition 2.1]{chirivi}. Now we provide an example of incidence stratification which is not Seshadri.
\begin{ex} \label{osservazione i}
  Let $k=2$, $n=3$ and $G=S_2$. By using Theorem \ref{parametriche sciubert} and Corollary \ref{corollario Schubert irriducibili} for $x=(3,3)$, and the implicitization method, we obtain that the Chow variety
  $\gr_{\mathbf{1}_{S_2}}(2,3) \subseteq \PP\left(V_{\mathbf{1}_G}^{\otimes 2}\right)$ is defined by the equation
  $$x_{13}^2x_{22} - x_{12}x_{13}x_{23} + x_{11}x_{23}^2 + x_{12}^2x_{33} - 4x_{11}x_{22}x_{33}=0. $$
  Notice that this is a Brill's equation, see \cite[Section 5.4]{briand}.

  The variety $\overline{C}_{(2,3)}$ has dimension $\rho((2,3))=3$
  and $\overline{C}_{(2,2)}$ is its singular locus, whose  defining ideal is $(x_{13},x_{23},x_{33})+I(\gr_{\mathbf{1}_{S_2}}(2,3) )=(x_{13},x_{23},x_{33})$, and it has dimension $2$.
  Hence $\overline{C}_{(2,3)}$ is not smooth in codimension one, i.e. the incidence stratification $\{C_x: x \in B_{\mathbf{1}_{S_2}}(2,3)\}$ of $\gr_{\mathbf{1}_{S_2}}(2,3)$  is not a Seshadri stratification.
\end{ex}

\section{Two conjectures and three problems} \label{sezione problematica}
It is well-known that Schubert varieties of the Grassmannian are Cohen-Macaulay, see \cite{chirivi0}, \cite{deconcini} and \cite{Hochster} and reference therein. Supported by several computations with {\em Macaulay2}, we formulate a conjecture about the geometry of the incidence strata of $\gr_{\mathbf{1}_G}(k,n)$ defined in the previous section.

\begin{conge} \label{conge CM}
Let $G\subseteq S_k$ be a group and $x \in B_{\mathbf{1}_G}(k,n)$; then
$\overline{C}_x$ is Cohen-Macaulay.
\end{conge}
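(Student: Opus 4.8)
The plan is to reduce Conjecture~\ref{conge CM} to a Cohen-Macaulayness statement about the \emph{normalization} of $\overline{C}_x$, and then to bridge back from the normalization by a conductor argument. First, by the proof of Theorem~\ref{teorema ciao}, the finite surjective morphism $\widehat{P}^{(n)}_{\mathbf{1}_G}\colon\seg(k,n)\to\gr_{\mathbf{1}_G}(k,n)$ restricts to a finite surjective morphism $\prod_{i=1}^k\PP(\C^{x_i})=\overline{C}_{x^\downarrow}\cap\seg(k,n)\longrightarrow\overline{C}_x$. Using the maximality property (Corollary~\ref{corollario massimalita}) one checks that over the open stratum $C_x$ its fibres are exactly the orbits of the isotropy group $G_x$, which acts on $\prod_i\PP(\C^{x_i})$ by permuting factors (necessarily of equal dimension); hence the morphism factors as
$$\prod_{i=1}^k\PP(\C^{x_i})\longrightarrow \tilde{X}_x:=\Bigl(\prod_{i=1}^k\PP(\C^{x_i})\Bigr)/G_x\longrightarrow\overline{C}_x,$$
the second arrow being finite and birational. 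As a quotient of a smooth projective variety by a finite group in characteristic zero, $\tilde{X}_x$ is normal and Cohen-Macaulay (locally its structure sheaf is the invariant subring of a regular ring under a finite group of invertible order, hence Cohen-Macaulay by the Reynolds-operator argument); being normal, $\tilde{X}_x$ is the normalization of $\overline{C}_x$. So the normalization of every stratum is Cohen-Macaulay.

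The crux is then the passage from $\overline{C}_x$ to its normalization $\nu\colon\tilde{X}_x\to\overline{C}_x$. Consider the exact sequence $0\to\mathcal{O}_{\overline{C}_x}\to\nu_*\mathcal{O}_{\tilde{X}_x}\to\mathcal{Q}\to 0$, with $\mathcal{Q}$ supported on the non-normal locus $Z\subseteq\overline{C}_x$. Since $\nu$ is finite and $\tilde{X}_x$ is Cohen-Macaulay, $\nu_*\mathcal{O}_{\tilde{X}_x}$ is a maximal Cohen-Macaulay $\mathcal{O}_{\overline{C}_x}$-module, and by the depth lemma $\overline{C}_x$ is Cohen-Macaulay as soon as $\mathcal{Q}$ is maximal Cohen-Macaulay on a subscheme pure of codimension one (recall $\dim\overline{C}_x=\rho(x)$ by Theorem~\ref{teorema ciao}). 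Since the lower strata $\overline{C}_y$ with $y\vartriangleleft x$ govern $Z$ and its partial normalizations, the natural attempt is an induction over $B_{\mathbf{1}_G}(k,n)$, with base case the smooth situation $G=\{e\}$ where $\overline{C}_x\simeq\prod_i\PP(\C^{x_i})$. The main obstacle is exactly the control of $\mathcal{Q}$: knowing only that the $\overline{C}_y$ are Cohen-Macaulay is not enough, since the crude estimate from the sequence above yields merely $\operatorname{depth}\mathcal{O}_{\overline{C}_x}\geq\dim\overline{C}_x-1$; what is needed is the sharper fact that $\mathcal{Q}$ is \emph{maximal} Cohen-Macaulay on a pure-codimension-one locus, i.e.\ a precise description of how $\overline{C}_x$ is glued from its strata.

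To obtain this sharp statement I would try a flat degeneration. The immanant parametric equations of Theorem~\ref{equazioni}, refined for the strata in Theorem~\ref{parametriche sciubert}, suggest that the homogeneous coordinate ring of $\overline{C}_x$ admits a Gr\"obner (or SAGBI) degeneration to a Stanley--Reisner ring attached to a simplicial complex built from the principal order ideal $x^\downarrow\subseteq B_{\mathbf{1}_G}(k,n)$; flat degeneration preserves Cohen-Macaulayness, and the Stanley--Reisner ring of a shellable complex is Cohen-Macaulay, so Conjecture~\ref{conge CM} would follow from a shellability statement in the spirit of Conjecture~\ref{conge shell}. Here the delicate points are to exhibit a term order (or valuation) realizing the degeneration and to identify the resulting complex; since $B_{\mathbf{1}_G}(k,n)$ need not be a lattice for general $G$ (Example~\ref{esempio ciclico}), this complex may well differ from the order complex of the poset, and pinning it down and proving it shellable is where the real difficulty lies.
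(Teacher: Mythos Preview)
The statement you are addressing is Conjecture~\ref{conge CM}: the paper does \emph{not} prove it. It is presented as an open conjecture supported by Macaulay2 computations, and the surrounding discussion explicitly links it to the (also open) shellability Conjecture~\ref{conge shell}. So there is no ``paper's own proof'' to compare against.

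Your proposal is an outline of a strategy, and you are candid that it is incomplete. The first half is sound: the restriction $\prod_{i}\PP(\C^{x_i})\to\overline{C}_x$ is finite and surjective, the generic fibre is a $G_x$-orbit (your argument via the maximality property is correct, since $g\in G$ sends $\prod_i\PP(\C^{x_i})$ into itself and hits a point of maximal support exactly when $g\in G_x$), hence the quotient $\bigl(\prod_i\PP(\C^{x_i})\bigr)/G_x$ is indeed the normalization of $\overline{C}_x$, and it is Cohen--Macaulay by Hochster--Eagon (or the Reynolds-operator splitting). This is a genuine observation and would make a worthwhile remark in its own right.

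The gap is exactly where you locate it, and it is a real one. Cohen--Macaulayness of the normalization does not descend in general; your depth-lemma analysis correctly shows that without further input one only obtains $\operatorname{depth}\geq\dim-1$, i.e.\ the $(S_2)$ condition, which is strictly weaker. The degeneration approach you sketch is the natural next move, but it is not a proof: you would need to (i) produce an explicit term order or valuation degenerating the coordinate ring of $\overline{C}_x$ to a monomial ideal, (ii) identify the associated simplicial complex, and (iii) prove that complex is shellable (or otherwise Cohen--Macaulay). Step~(iii) is essentially Conjecture~\ref{conge shell}, which the paper also leaves open, and as you note the failure of $B_{\mathbf{1}_G}(k,n)$ to be a lattice for general $G$ (Example~\ref{esempio ciclico}) blocks the standard distributive-lattice arguments. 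In short, your plan reduces one open conjecture to another plus a nontrivial degeneration statement; this is a reasonable roadmap but not a proof, and the paper does not claim more.
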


If the techniques based on flat deformations of $LS$ algebras would be available in our context (see \cite{chirivi0}), the previous conjecture could be related to the following one, which has an independent interest.

\begin{conge} \label{conge shell}
Let $G\subseteq S_k$ be a group.
Then the order complex of the poset $B_{\mathbf{1}_G}(k,n)$ is shellable.
\end{conge}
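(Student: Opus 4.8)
The aim is to show that $B_{\mathbf{1}_G}(k,n)$ is EL-shellable, or at least that it admits a recursive atom ordering; by the standard lexicographic shellability machinery (Bj\"{o}rner--Wachs) either of these implies that the order complex of the bounded graded poset $B_{\mathbf{1}_G}(k,n)$ is shellable. The model to generalise is $G=S_k$, where $B_{\mathbf{1}_{S_k}}(k,n)$ is a distributive lattice and shellability is already known (see the end of Section \ref{sezione triviale}). The structural tool I would rely on is the quotient morphism $p_{\mathbf{1}_G}\colon [n]^k\to B_{\mathbf{1}_G}(k,n)$, which by Proposition \ref{prop graduato} and the claim established inside the proof of Theorem \ref{parametriche sciubert} is a surjection of graded posets sending every principal order ideal $x^{\downarrow}$ onto the principal order ideal $\bigl(p_{\mathbf{1}_G}(x)\bigr)^{\downarrow}$. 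This yields a lifting dictionary: each cover $\overline{x}\lessdot\overline{y}$ of $B_{\mathbf{1}_G}(k,n)$ is the image of a cover $\overline{x}\lessdot y'$ of $[n]^k$ with $y'\in O_y$ (one can always take the bottom of the lift to be $\overline{x}$ itself), and, iterating, every maximal chain of an interval $[\overline{x},\overline{y}]$ of $B_{\mathbf{1}_G}(k,n)$ lifts to a maximal chain of $[n]^k$ from $\overline{x}$ to some element of $O_y$ above $\overline{x}$, and conversely projects to one. The product of chains $[n]^k$ carries the classical EL-labeling sending a cover that raises the $i$-th coordinate from $v$ to $v+1$ to the pair $(i,v)$ with lexicographic order.

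The first attempt is to descend this labeling to $B_{\mathbf{1}_G}(k,n)$ by labelling a quotient cover with a label of a canonically chosen lift. A warning is that the most naive recipe---take the lexicographically smallest lift over $\overline{x}$ and record the incremented coordinate---is already not an EL-labeling for $B_{\mathbf{1}_{A_3}}(3,3)$ (Example \ref{esempio ciclico}): there the unique atom is reached by raising any coordinate, and the forced initial segment of a maximal chain then acquires a descent a few steps later; what a correct argument must show is that, after a more careful choice of lifts, every interval $[\overline{x},\overline{y}]$ still has a unique strictly increasing maximal chain which is lexicographically first. In the small cases one verifies directly that the non-modular diamond-shaped intervals, with complete bipartite middle layers, that appear here do admit such a labeling. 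A possibly more robust alternative is a recursive atom ordering: order the atoms of any interval $[\overline{x},\hat{1}]$---which are indexed by the orbits of the stabiliser $G_{\overline{x}}$ on those coordinates of $\overline{x}$ that may still be raised---by least available coordinate, and induct on the corank $k(n-1)-\rho(\overline{x})$, exploiting that the upper interval of such an atom is again a quotient of a product of chains, now of possibly unequal lengths, by a group of coordinate permutations, so that a suitably general inductive statement can be applied.

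The hard part is exactly where the lattice property is lost. By Example \ref{esempio ciclico} the interval $[\overline{x},\overline{y}]$ need not be a lattice and several elements of $O_y$ may lie above $\overline{x}$ in $[n]^k$, so the increasing maximal chains sitting over $[\overline{x},\overline{y}]$ can live in genuinely different fundamental domains of the $G$-action; one must prove that exactly one of them survives, as an increasing chain, under the projection (existence and uniqueness) and that it is lexicographically first. Making this rigorous reduces to a careful combinatorial analysis of how the canonical-representative map $x\mapsto\overline{x}$ interacts with unit increments of coordinates lying in a common $G$-orbit---essentially a statement about lexicographically minimal words inside orbits of $G\subseteq S_k$---and this is the point which our experiments support but which we have not been able to settle in general. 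Should the lexicographic approach prove intractable, a fallback would be to work with the shellable complex $\Delta([n]^k)$ and try to build $\Delta\bigl(B_{\mathbf{1}_G}(k,n)\bigr)$ from it by a $G$-equivariant sequence of elementary collapses, or by discrete Morse theory; at present, however, there is no general quotient-of-a-shellable-poset criterion available at the level of generality required here.
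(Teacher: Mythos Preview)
The statement you are attempting to prove is stated in the paper as a \emph{conjecture}, not a theorem; the paper offers no proof and explicitly lists it among ``Two conjectures and three problems'' in Section~\ref{sezione problematica}. There is therefore nothing in the paper to compare your proposal against: the authors only record that the special cases $G=\{e\}$ and $G=S_k$ are known (the poset being a distributive lattice in those cases), that the poset is not a lattice in general (Example~\ref{esempio ciclico}), and that computational evidence supports shellability.

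Your proposal is consistent with this state of affairs: it is not a proof but an outline of plausible strategies (EL-labeling descended from $[n]^k$, recursive atom ordering, equivariant collapses), and you explicitly acknowledge that the crucial step---controlling how the canonical-representative map interacts with unit increments so as to guarantee a unique increasing chain in each interval---remains unsettled. That honest assessment matches the paper's own position. The concrete obstruction you identify (multiple elements of $O_y$ lying above $\overline{x}$ in $[n]^k$, so that lifts of an interval live in several fundamental domains) is exactly why the distributive-lattice argument from the $G=S_k$ case does not extend, and it is the right place to focus. In short: there is no gap relative to the paper, because the paper has no proof; your write-up is a reasonable research plan, not a proof, and should be labelled as such.
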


In Section \ref{sezione unidimensionale} we defined a poset $B_{\chi}(k,n)$ for one-dimensional characters $\chi$. In this case, what we know is that
if $\chi$ is the alternating character of $S_k$, $B_{\chi}(k,n)$ is the Bruhat order on Grassmannian permutations, hence it has shellable intervals (see \cite[Chapter 2]{BB}).

Our conjecture does not extend to other characters, because, for instance, if $G=\{e,4321\}\subseteq S_4$ and $\chi(4321)=-1$,
then the order complex of $B_\chi(4,3)$ is not shellable. This leads to the following problem.

\begin{prob}
For which one-dimensional characters $\chi$ have we that $B_{\chi}(k,n)$ is graded? What is the topology of its intervals?
\end{prob}

In Proposition \ref{corollario chiuso}, we provide a condition ensuring that $\imm(\,\widehat{f}\,)$ is Zariski closed; this condition is satisfied in the case of trivial characters (Corollary \ref{banale chiuso}). Then the following problem seems to be natural.

\begin{prob}
For which $f \in \End\left(V^{\otimes k}\right)$ have we that $\imm(\,\widehat{f}\,)$ is Zariski closed, i.e. $\gr_f(k,n)=\imm(\,\widehat{f}\,)$?
\end{prob}

Along the paper, we define $\chi$-matroids for a one-dimensional character $\chi$ of a group $G \subseteq S_k$ (see Definition \ref{chi matroidi}). We believe that this notion has an independent interest.

\begin{prob}
Find crypthomorphic definitions of $\chi$-matroids and explore their combinatorial properties.
\end{prob}

\section{Acknowledgements}
We thank Christophe Reutenauer for inspiring the content of Example \ref{lyndon}. The first author is grateful to Dipartimento di Matematica of Politecnico di Milano for its hospitality and financial support.

Both authors are grateful to the amenity of Robecco sul Naviglio for inspiring some ground ideas of the present work.

\end{document}